\newcommand{\mymathcal}[1]{\mathcal{#1}}
\DeclareMathAlphabet{\mathcal}{OMS}{cmsy}{m}{n}
\newcommand{\Angle}[2]{\measuredangle(#1,#2)}
\newcommand{\bigAngle}[2]{\measuredangle \big( #1,#2 \big)}
\newcommand{\LogStrain}{\sigma\!}
\newcommand{\StrainBound}{\varLambda}
\newcommand{\apriori}{a\,priori\xspace}
\newcommand{\DiscD}{\cD_\Polygon}
\newcommand{\DiscDnor}{\cD_\Polygon^\perp}
\newcommand{\DiscLaplacian}{\Laplacian_\Polygon}
\newcommand{\prnor}{\pr_\Curve^\perp}
\newcommand{\Discprnor}{\pr_\Polygon^\perp}
\newcommand{\LeftShift}{\downarrow}
\newcommand{\RightShift}{\uparrow}
\newcommand{\shiftl}[1]{#1^{\LeftShift}}
\newcommand{\shiftll}[1]{#1^{\LeftShift\!\LeftShift}}
\newcommand{\shiftr}[1]{#1^{\RightShift}}
\newcommand{\shiftrr}[1]{#1^{\RightShift\!\RightShift}}
\newcommand{\VthisV}[1]{#1^{\phantom{}}}
\newcommand{\VnextV}[1]{#1^{\RightShift\!\RightShift}}
\newcommand{\VprevV}[1]{#1^{\LeftShift\!\LeftShift}}
\newcommand{\EnextE}[1]{#1^{\RightShift\!\RightShift}}
\newcommand{\EprevE}[1]{#1^{\LeftShift\!\LeftShift}}
\newcommand{\EnextV}[1]{#1^{\RightShift}}
\newcommand{\EprevV}[1]{#1^{\LeftShift}}
\newcommand{\VnextE}[1]{#1^{\RightShift}}
\newcommand{\VprevE}[1]{#1^{\LeftShift}}
\DeclareDocumentCommand{\sobo}{ O{} O{} o O{\R}}{
	\IfValueTF{#3}{
	  w^{#1}_{#2}(#3;#4)
	}{
	  w^{#1}_{#2}
	}
}
\DeclareDocumentCommand{\imm}{ O{} O{} o O{\R}}{
	\IfValueTF{#3}{
	  \operatorname{imm}^{#1}_{#2}(#3;#4)
	}{
	  \operatorname{imm}^{#1}_{#2}
	}
}
\DeclareDocumentCommand{\bv}{ O{} O{} o O{\R}}{
	\IfValueTF{#3}{
	  bv^{#1}_{#2}(#3;#4)
	}{
	  bv^{#1}_{#2}
	}
}
\DeclareDocumentCommand{\tv}{ O{} O{} o O{\R}}{
	\IfValueTF{#3}{
	  tv^{#1}_{#2}(#3;#4)
	}{
	  tv^{#1}_{#2}
	}
}
\DeclareDocumentCommand{\Sobo}{ O{} O{} o O{\R}}{
	\IfValueTF{#3}{
	  W^{#1}_{#2}(#3;#4)
	}{
	  W^{#1}_{#2}
	}
}
\DeclareDocumentCommand{\SoboC}{ O{} O{} }{\Sobo[#1][#2][\Interval][\AmbSpace]}
\DeclareDocumentCommand{\Holder}{ O{} O{} o O{\R}}{
	\IfValueTF{#3}{
	  C^{#1}_{#2}(#3;#4)
	}{
	  C^{#1}_{#2}
	}
}
\DeclareDocumentCommand{\HolderC}{ O{} O{} }{\Holder[#1][#2][\Interval][\AmbSpace]}
\DeclareDocumentCommand{\BV}{ O{} O{} o O{\R}}{
	\IfValueTF{#3}{
	  BV^{#1}_{#2}(#3;#4)
	}{
	  BV^{#1}_{#2}
	}
}
\DeclareDocumentCommand{\BVC}{ O{} O{} }{\BV[#1][#2][\Interval][\AmbSpace]}
\DeclareDocumentCommand{\TV}{ O{} O{} o O{\R}}{
	\IfValueTF{#3}{
	  TV^{#1}_{#2}(#3;#4)
	}{
	  TV^{#1}_{#2}
	}
}
\DeclareDocumentCommand{\Imm}{ O{} O{} o O{\AmbSpace}}{
	\IfValueTF{#3}{
	  \operatorname{Imm}^{#1}_{#2}(#3;#4)
	}{
	  \operatorname{Imm}^{#1}_{#2}
	}
}
\DeclareDocumentCommand{\ImmC}{ O{} O{} }{\Imm[#1][#2][\Interval][\AmbSpace]}
\DeclareDocumentCommand{\Diff}{ O{} O{} }{\operatorname{Diff}^{#1}_{#2}}
\newcommand{\Triangulation}{T}
\newcommand{\textstyleswitcha}{\textstyle}
\newcommand{\OpenBall}[2]{B(#1;#2)}
\newcommand{\ClosedBall}[2]{\bar B(#1;#2)}
\newcommand{\bigClosedBall}[2]{\bar B \,\bigparen{#1;#2}}
\newcommand{\Dtot}{\cD_\Curve}
\newcommand{\Dtan}{\Dtot^\top}
\newcommand{\Dnor}{\Dtot^\perp}
\newcommand{\Tangent}{\tau}
\newcommand{\TangentC}{\Tangent_\Curve}
\newcommand{\Curvature}{\kappa}
\newcommand{\CurvatureC}{\Curvature_\Curve}
\newcommand{\SignedDistC}[2]{s_{\Curve}(#1,#2)}
\DeclareDocumentCommand{\SignedDistCp}{ o O{} O{}}{
	\IfValueTF{#1}{
	  s_{\Curve}(#2,#3)^{#1}
	}{
	  s_{\Curve}(#2,#3)
	}
}
\newcommand{\UnsignedDistC}[2]{d_\Curve(#1,#2)}
\DeclareDocumentCommand{\UnsignedDistCp}{ o O{} O{}}{
	\IfValueTF{#1}{
	  d_{\!\Curve}(#2,#3)^{#1}
	}{
	  d_{\!\Curve}(#2,#3)
	}
}
\DeclareDocumentCommand{\LineElement}{ o o }{
	\IfValueTF{#1}{
	  \omega_{#1}\IfValueTF{#2}{(#2)}{}
	}{
	  \omega\IfValueTF{#2}{(#2)}{}
	}
}
\DeclareDocumentCommand{\LineElementC}{ o }{
	\IfValueTF{#1}{
	  \omega_\Curve(#1)
	}{
	  \omega_\Curve
	}
}
\newcommand{\EnergyBound}{K}
\newcommand{\SubscriptTriangulation}{{\scriptscriptstyle\Triangulation}}
\newcommand{\Feasible}{\cC}
\newcommand{\DiscFeasible}{\Feasible_\SubscriptTriangulation}
\newcommand{\ConfSpace}{\tilde{\Feasible}}
\newcommand{\DiscConfSpace}{\ConfSpace_\SubscriptTriangulation}
\newcommand{\hide}[1]{}
\newcommand{\Sampling}[1][\SubscriptTriangulation]{{\mymathcal{S}_{#1}}}
\newcommand{\Reconstruction}[1][\SubscriptTriangulation]{{\mymathcal{R}_{#1}}}
\newcommand{\SamplingApprox}[1][\SubscriptTriangulation]{{\tilde{\mymathcal{S}}_{#1}}}
\newcommand{\ReconstructionApprox}[1][\SubscriptTriangulation]{{\tilde{\mymathcal{R}}_{#1}}}
\newcommand{\DiscTestMap}[1][\SubscriptTriangulation]{{\varPsi_{#1}}}
\newcommand{\Adjust}{\mymathcal{P}}
\newcommand{\DiscAdjust}{\mymathcal{P}_{\SubscriptTriangulation}}
\newcommand{\EulerBernoulli}{\cE}
\newcommand{\DiscEulerBernoulli}{\EulerBernoulli_\SubscriptTriangulation}
\newcommand{\Length}{\cL}
\newcommand{\MaxRadius}{h}
\newcommand{\AmbDim}{m} 
\newcommand{\AmbSpace}{{\R^\AmbDim}}
\newcommand{\Sphere}{\mathbb{S}}
\newcommand{\Priors}{\cA}
\newcommand{\DiscRestorationPriors}{\tilde\Priors_\SubscriptTriangulation(\StrainBound,\EnergyBound,\eta)}
\newcommand{\RestorationPriors}{\tilde\Priors(\StrainBound,\EnergyBound,\eta)}
\newcommand{\DiscPriors}{\Priors_\SubscriptTriangulation}
\newcommand{\Minimizers}{\cM}
\newcommand{\DiscMinimizers}{\Minimizers_\SubscriptTriangulation}
\newcommand{\DCond}{q} 
\newcommand{\NCond}{N} 
\newcommand{\EdgeMidpoints}{m}
\newcommand{\TurningAngles}{\alpha}
\newcommand{\EdgeVectors}{\tau}
\newcommand{\EdgeLengths}{\ell}
\newcommand{\ReferenceEdgeLengths}{\ell_0}
\newcommand{\DualEdgeLengths}{\bar \ell}
\newcommand{\DualReferenceEdgeLengths}{\bar \ell_0}
\newcommand{\Vertex}{i}
\newcommand{\OtherVertex}{j}
\newcommand{\Vertices}{V}
\newcommand{\IVertices}{V_{\on{int}}}
\newcommand{\Edge}{I}
\newcommand{\DualEdge}{\bar I}
\newcommand{\InteriorEdge}{I}
\newcommand{\Edges}{E}
\newcommand{\BEdges}{E_{\on{bnd}}}
\newcommand{\IEdges}{E_{\on{int}}}
\newcommand{\Interval}{\varSigma}
\newcommand{\Polygon}{P}
\newcommand{\OtherPolygon}{Q}
\newcommand{\Curve}{\gamma}
\newcommand{\ConstraintMap}{\varPhi}
\newcommand{\DiscConstraintMap}{\ConstraintMap_\SubscriptTriangulation}
\newcommand{\Tame}{\cB}
\newcommand{\DiscTame}{\Tame_\SubscriptTriangulation}
\newcommand{\Compactum}{\cK}
\newcommand{\TargetSpace}{\cY}
\newcommand{\DiscTargetSpace}{\TargetSpace_\SubscriptTriangulation}
\newcommand{\myincludegraphics}[2]{\begin{tikzpicture}
    \node[inner sep=0pt] (fig) at (0,0) {\includegraphics{#1}};
	\node[above right= 1ex] at (fig.south west) {\footnotesize\textbf{(#2)}};    
\end{tikzpicture}
}
\newcommand{\llincludegraphics}[2]{\begin{tikzpicture}
    \node[inner sep=0pt] (fig) at (0,0) {\includegraphics{#1}};
	\node[above right= 1ex] at (fig.south west) {#2};    
\end{tikzpicture}
}
\DeclareDocumentCommand{\Hess}{ O{} }{\operatorname{Hess}_{#1}}
\DeclareDocumentCommand{\RX}{ O{\,} O{} O{} }{X_{\!#2}#3#1}
\DeclareDocumentCommand{\RH}{ O{\,} O{} O{} }{H_{\!#2}#3#1}
\DeclareDocumentCommand{\RY}{ O{\,} O{} O{} }{Y_{\!#2}#3#1}
\DeclareDocumentCommand{\RT}{ O{\,} O{} O{} }{T_{\!#2}#3#1}
\DeclareDocumentCommand{\RZ}{ O{\,} O{} O{} }{Z_{\!#2}#3#1}
\DeclareDocumentCommand{\T}{ O{} O{} }{T_{\!#1}#2}
\DeclareDocumentCommand{\Ri}{ O{} }{i_{#1}}
\DeclareDocumentCommand{\Rj}{ O{} }{j_{#1}}
\DeclareDocumentCommand{\RI}{ O{} }{I_{#1}}
\DeclareDocumentCommand{\RJ}{ O{} }{J_{#1}}
\DeclareDocumentCommand{\RK}{ O{} }{K_{#1}}
\DeclareDocumentCommand{\Rg}{ O{} }{g_{#1}}
\DeclareDocumentCommand{\RP}{ O{} }{P_{#1}}
\DeclareDocumentCommand{\RQ}{ O{} }{Q_{#1}}
\DeclareDocumentCommand{\RPsi}{ O{} }{\varPsi_{#1}}
\newcommand{\invisible}[1]{}
\newcommand{\qand}{\quad \text{and} \quad}
\newcommand{\bnd}{\partial}
\DeclareMathOperator*{\esssup}{ess\,sup}
\DeclareMathOperator{\epi}{epi}
\DeclareMathOperator{\argmin}{arg\,min}
\newcommand{\cA}{{\mymathcal{A}}}
\newcommand{\cB}{{\mymathcal{B}}}
\newcommand{\cC}{{\mymathcal{C}}}
\newcommand{\cD}{{\mymathcal{D}}}
\newcommand{\cE}{{\mymathcal{E}}}
\newcommand{\cF}{{\mymathcal{F}}}
\newcommand{\cG}{{\mymathcal{G}}}
\newcommand{\cK}{{\mymathcal{K}}}
\newcommand{\cL}{{\mymathcal{L}}}
\newcommand{\cM}{{\mymathcal{M}}}
\newcommand{\cY}{{\mymathcal{Y}}}
\DeclareMathOperator{\dom}{dom}
\DeclareMathOperator{\Laplacian}{\Delta}
\newcommand{\dd}{{\operatorname{d}}}
\newcommand{\Map}{{\on{Map}}}
\newcommand{\real}{{\on{r}}}
\newcommand{\ee}{{\on{e}}}
\newcommand{\lamda}{{\lambda}}
\newcommand{\ceq}{\coloneqq}
\newcommand{\R}{{\mathbb{R}}}
\newcommand{\N}{\mathbb{N}}
\DeclareMathOperator{\id}{id}
\newcommand{\abs}[1]{\left\lvert#1\right\rvert} 
\newcommand{\nabs}[1]{\lvert{#1}\rvert} 
\newcommand{\bigabs}[1]{\big\lvert{#1}\big\rvert} 
\newcommand{\norm}[1]{\left\lVert#1\right\rVert}
\newcommand{\nnorm}[1]{\lVert{#1}\rVert}
\newcommand{\bignorm}[1]{\big\lVert{#1}\big\rVert}
\newcommand{\seminorm}[1]{\left[#1\right]}
\newcommand{\nseminorm}[1]{[{#1}]}
\newcommand{\ninnerprod}[1]{\langle{#1}\rangle}
\newcommand{\Biginnerprod}[1]{\Big\langle{#1}\Big\rangle}
\newcommand{\paren}[1]{\left(#1\right)}
\newcommand{\nparen}[1]{(#1)}
\newcommand{\bigparen}[1]{\big(#1\big)}
\newcommand{\Bigparen}[1]{\Big(#1\Big)}
\newcommand{\intervaloo}[1]{\left]#1\right[}
\newcommand{\intervalco}[1]{\left[#1\right[}
\newcommand{\intervalcc}[1]{\left[#1\right]}
\newcommand{\nintervaloo}[1]{{]#1[}}
\newcommand{\nintervalcc}[1]{{[#1]}}
\newcommand{\on}[1]{\operatorname{#1}}
\DeclareMathOperator{\Ima}{im}
\DeclareMathOperator{\dist}{dist}  
\DeclareMathOperator{\Hom}{Hom}    
\DeclareMathOperator{\pr}{pr}
\newcommand{\mynewtheorem}[4] 
{
\newaliascnt{#1}{#2}
\newtheorem{#1}[#1]{#3}
\aliascntresetthe{#1}
\expandafter\def\csname #1autorefname\endcsname{%
#4%
}%
}
\newtheorem{theorem}{Theorem}[section]
\theoremstyle{break}
\theoremstyle{plain}
\theoremstyle{break}
\theoremstyle{nonumberplain}
\newtheorem{proof}{\textsc{Proof}}
\begin{document}
\title{Variational Convergence\\ of Discrete Elasticae}
\author{%
Sebastian Scholtes\thanks{
\href{mailto:sebastian.scholtes@rwth-aachen.de}{sebastian.scholtes@rwth-aachen.de}
}
\and
Henrik Schumacher\thanks{
\href{schumacher@instmath.rwth-aachen.de}{schumacher@instmath.rwth-aachen.de}
}
\and
Max Wardetzky\thanks{
\href{mailto:wardetzky@math.uni-goettingen.de}{wardetzky@math.uni-goettingen.de}
}
}

\maketitle

\begin{abstract}
We discuss a discretization by polygonal lines of the Euler-Bernoulli bending energy
and of Euler elasticae under clamped boundary conditions.
We show Hausdorff convergence 
of the set of almost minimizers of the discrete bending energy
to the set of smooth Euler elasticae under mesh refinement in (i) the $W^{1,\infty}$-topology 
for piecewise-linear interpolation and in (ii) the $W^{2,p}$-topology, $p \in \intervalco{2,\infty}$, using a suitable smoothing operator to create $\Sobo[2,p]$-curves from polygons.
\end{abstract}


\section{Introduction}\label{sec:intro}

The \emph{Euler-Bernoulli bending energy} is frequently used as a model for the bending part of the stored elastic energy of a thin, flexible but inextensible piece of material  that has a straight cylindrical rest state. 
For a compact interval $\varSigma \subset \R$ and a curve $\Curve \in \SoboC[2,2]$, the Euler-Bernoulli bending energy is defined as the integral of squared curvature with respect to the curve's line element $\LineElementC$, i.e., 
\begin{align}
	\EulerBernoulli(\Curve) \ceq \frac{1}{2} \int_\varSigma \,\nabs{\CurvatureC}^2 \, \LineElementC.
	\label{eq:EulerBernoulliEnergy}
\end{align}
The classical \emph{Euler elastica problem} is to find minimizers of $\EulerBernoulli$ in a feasible set $\Feasible$ of all curves of given \emph{fixed} curve length $L$ subject to \emph{fixed} first order boundary conditions that pin down positions and tangent directions at both ends of the curve. 
Together, these two constraints---fixed curve length and fixed boundary conditions---constitute the main difficulty of the problem. Indeed, in dimension two, dropping the positional constraints (while keeping the tangent and length constraints) would yield rather trivial minimizers in the form of circular arcs. Likewise, dropping the length constraint (while keeping endpoint and end tangents constraints)  would prevent existence of solutions: In dimension two, consider two straight line segments that respectively meet the two boundary conditions and connect these line segments by a circular arc at their free ends. The energy of such a curve is reciprocal to the length of the circular arc and thus arbitrarily small, yielding a minimizing sequence that does not converge.

We call prescribed constraints \emph{commensurable} if the distance between the end points is less than the curve's length $L$.
By $\Minimizers$ we denote the set of \emph{Euler elasticae}, i.e., the minimizers for fixed curve length and fixed boundary data. Notice that this set is nonempty for commensurable prescribed constraints.

\begin{figure}[t]
\capstart
\newcommand{\settrimming}{%
    \setkeys{Gin}{%
        trim = 48 115 35 150, 
        clip=true, 
        width=0.325\textwidth
    }
    \presetkeys{Gin}{clip}{}
}
\settrimming
\begin{center}
\llincludegraphics{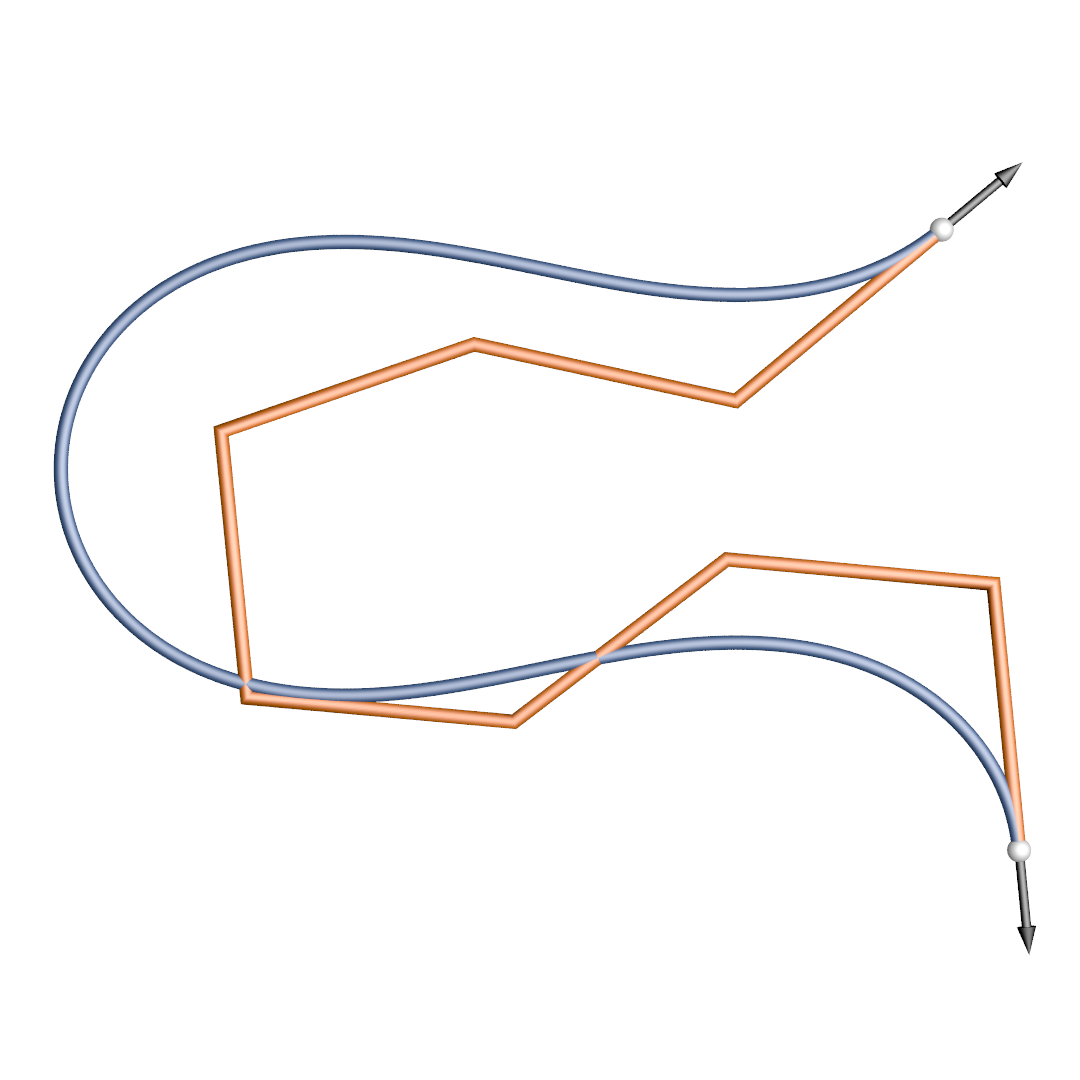}{\scriptsize$\nabs{\Edges(\Triangulation)} = 8$}%
\llincludegraphics{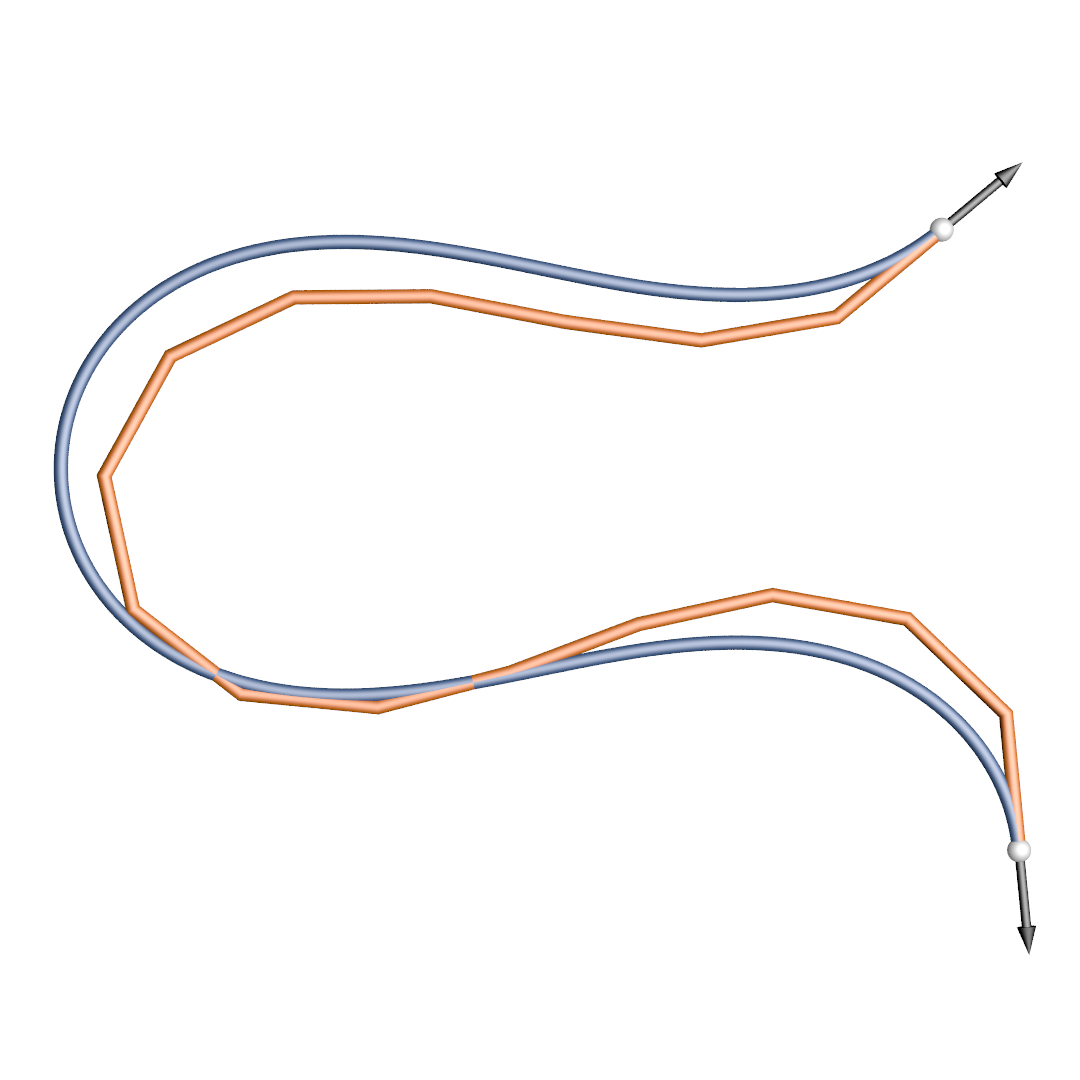}{\scriptsize$\nabs{\Edges(\Triangulation)} = 16$}%
\llincludegraphics{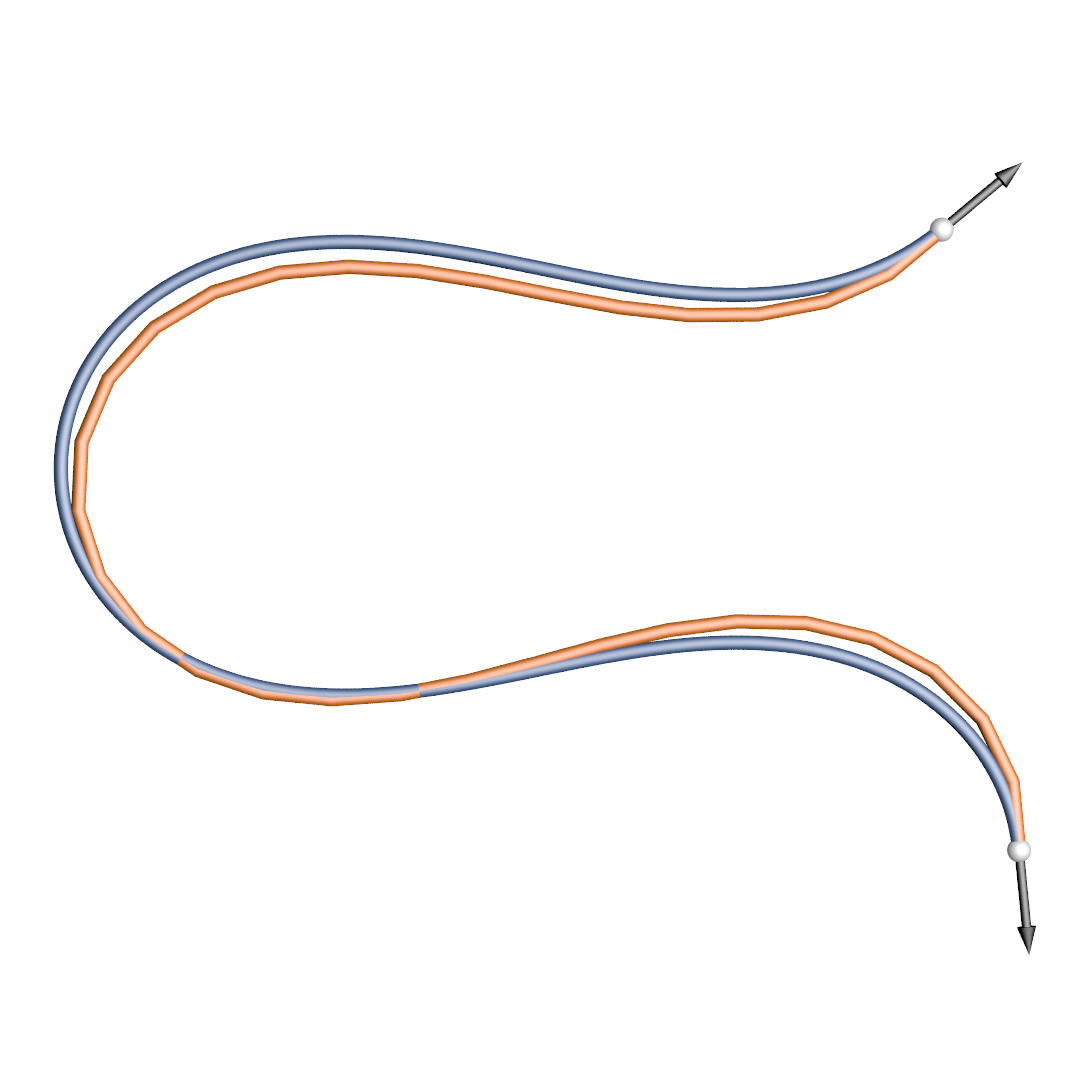}{\scriptsize$\nabs{\Edges(\Triangulation)} = 32$}%
\\
\llincludegraphics{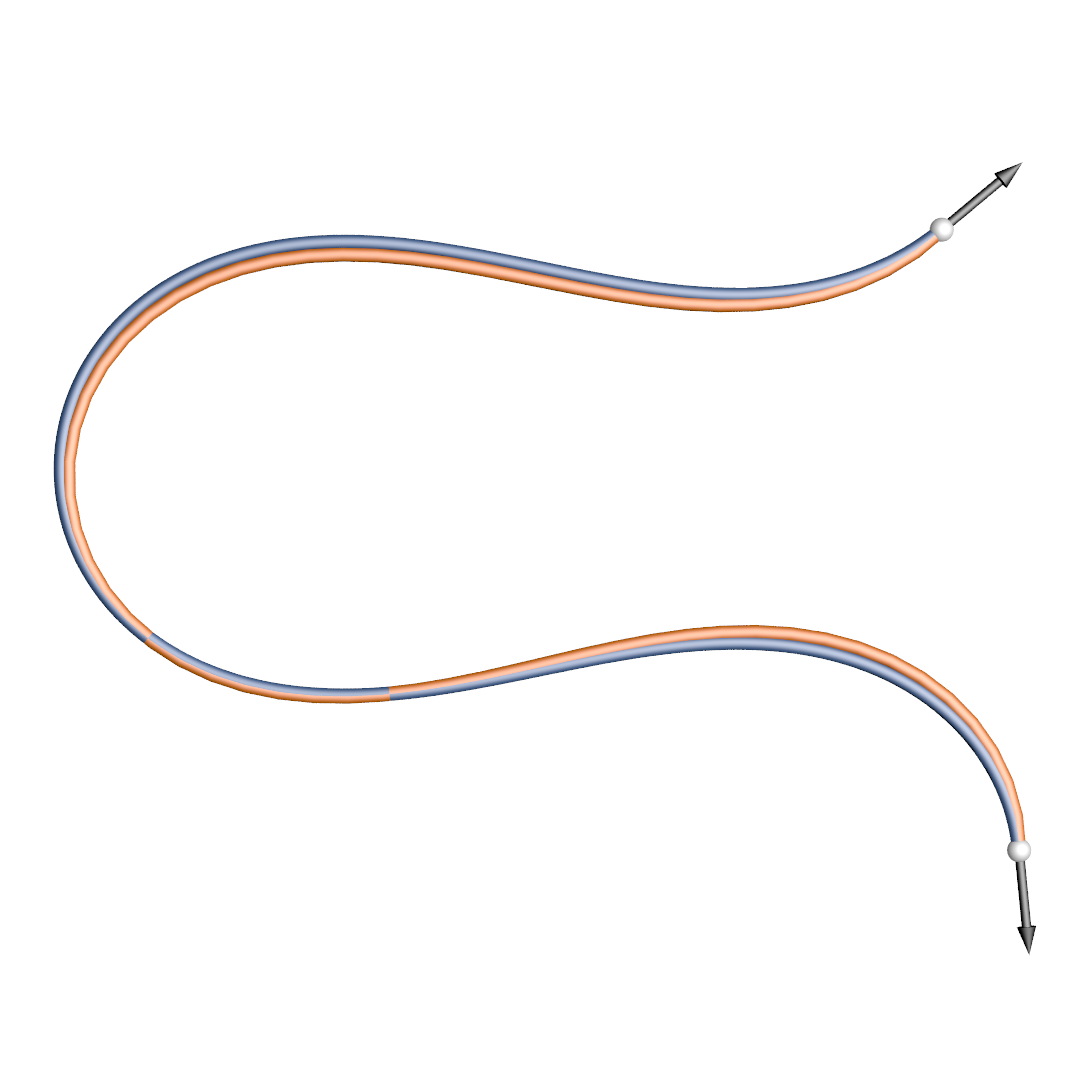}{\scriptsize$\nabs{\Edges(\Triangulation)} = 64$}%
\llincludegraphics{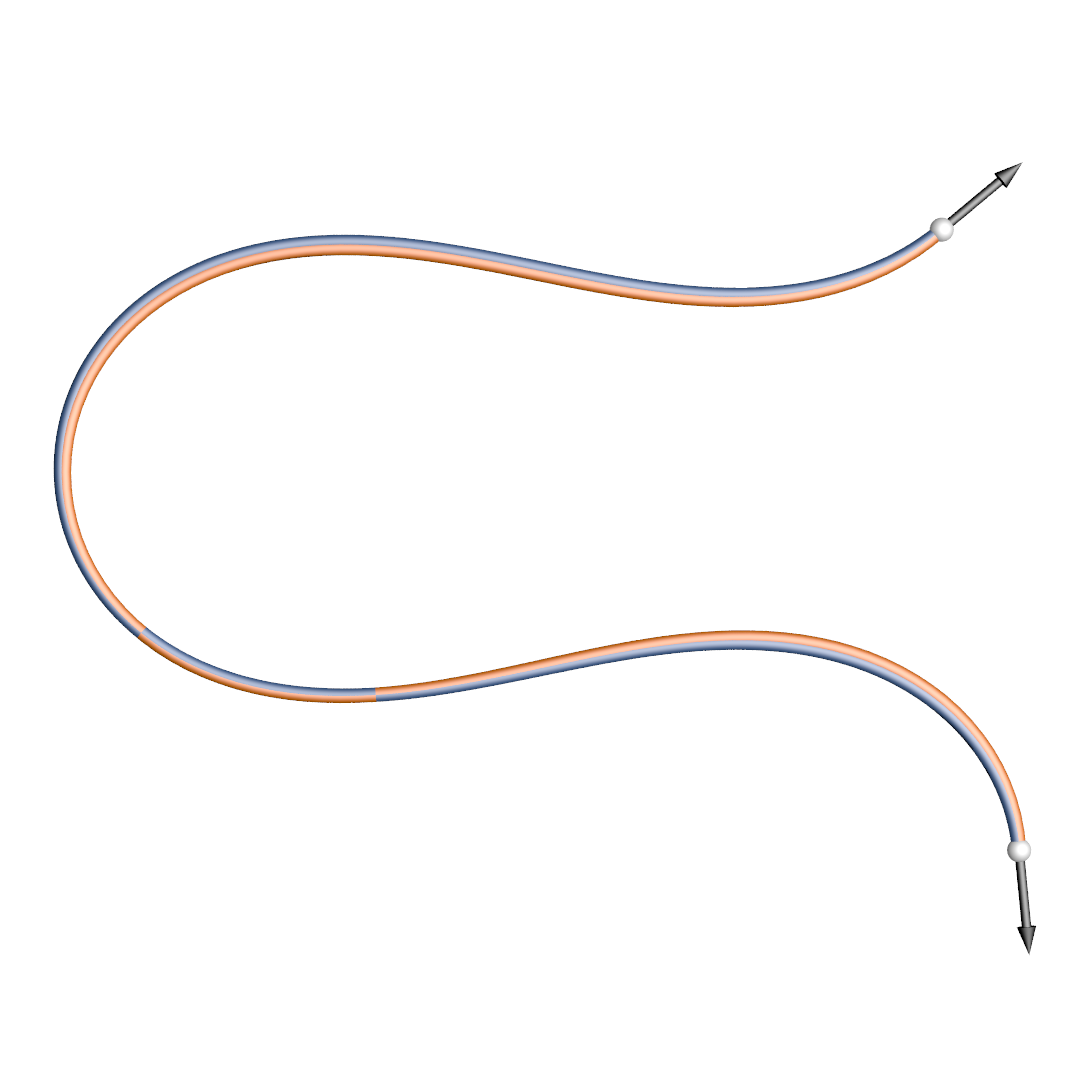}{\scriptsize$\nabs{\Edges(\Triangulation)} = 128$}%
\llincludegraphics{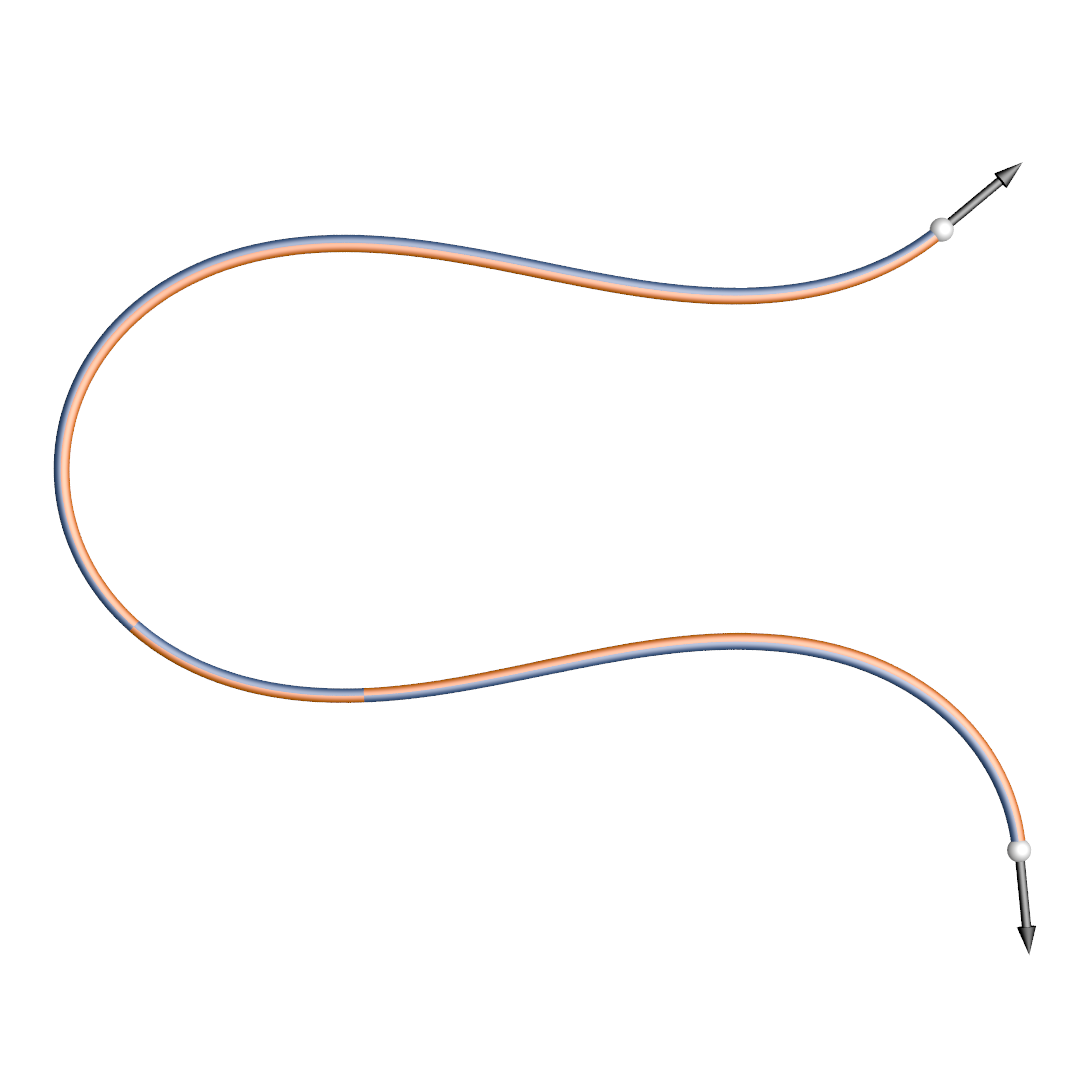}{\scriptsize$\nabs{\Edges(\Triangulation)} = 256$}%
\end{center}
\caption{An elastica in $\R^2$ with clamped ends (blue, obtained from Jacobi elliptic functions) compared to discrete energy minimizers (orange) for various 
resolutions.
}
\label{fig:SmoothvsDiscreteElastica2}
\end{figure}

\bigskip

Analytic representations of Euler elasticae in $\R^2$ and $\R^3$ can be expressed in terms of elliptic integrals 
(see, e.g., \cite{Levien08theelastica:}). Sometimes, however, it is more desirable to consider approximate solutions. 
This is even more true when elastic curves are coupled to external forces, in which case analytic solutions are no longer available. A possible finite dimensional ansatz space for approximate solutions is the space of cubic B-splines (piecewise polynomials of third order, fitted together with $C^1$-continuity), see, e.g., \cite{MR3119710}. While this space is a subset of the energy space $\SoboC[2,2]$, this formulation poses difficulties when enforcing the length constraint.

Therefore, polygonal models are often preferred due to their conceptual simplicity and ease of formulation:
On a finite partition $\Triangulation$ of the interval $\varSigma$ with vertex set $\Vertices(\Triangulation)$, consider the set of \emph{discrete immersions}.
This space contains all
polygons $\Polygon \colon V(\Triangulation) \to \AmbSpace$ whose successive vertices are mapped to distinct points.
On this set, define the \emph{discrete Euler-Bernoulli energy}  by
\begin{align}
	\textstyle
	\DiscEulerBernoulli(\Polygon) \ceq 
	\frac{1}{2}
	\sum_{\Vertex \in \IVertices(\Triangulation)} 
	\left(
		\frac{\TurningAngles_\Polygon(\Vertex)}{\DualEdgeLengths_\Polygon(\Vertex)}
	\right)^2 \, \DualEdgeLengths_\Polygon(\Vertex)
	=	
	\frac{1}{2}
	\sum_{\Vertex \in \IVertices(\Triangulation)} 
\frac{\TurningAngles^2_\Polygon(\Vertex)}{\DualEdgeLengths_\Polygon(\Vertex)},
	\label{eq:DiscreteEulerBernoulliEnergy}
\end{align}
where $\TurningAngles_\Polygon(\Vertex)$ is the \emph{turning angle} at an interior vertex $\Vertex$ and $\DualEdgeLengths_\Polygon(\Vertex)$ is the \emph{dual edge length}, 
i.e., the arithmetic mean of the lengths of the two adjacent (embedded) edges. 
This energy is motivated by the observation that turning angles are in many ways a reasonable surrogate for integrated absolute curvature on dual edges (see, e.g., \cite{MR2405664}, \cite{Crane_Wardetzky_2017}).\footnote{Notice that if one considers dual edge lengths $\DualEdgeLengths$ based on the \emph{circular arc} through three consecutive points in the formulation of \eqref{eq:DiscreteEulerBernoulliEnergy}, then the value of the resulting discrete energy is \emph{exact} for circles, independent of the discretization.}
If one replaces the absolute curvature density in \eqref{eq:EulerBernoulliEnergy} by the averages of the total absolute curvatures over their respective dual edges, one is immediately led to \eqref{eq:DiscreteEulerBernoulliEnergy}. It seems that this model has first been considered by Hencky in his 1921 PhD thesis \cite{hencky1921angenaherte}.

\begin{table}
\begin{center}
\begin{tabular}{c|c|l|c|c|c|c|c}
	& functional & convergence
	& clamped	
	& $\Length$		
	& $\ell_i \neq \ell_j$
	& $\R^m$
	\\\hline
	\cite{MR1880530}
	& $\EulerBernoulli$, $\EulerBernoulli^p$ &$\Gamma$ (Fréchet) 
	&\checkmark 
	& ${-}^{\text{a}}$	
	&\checkmark 
	&\checkmark
	\\\hline
	\cite{MR3318319}
	& $\EulerBernoulli + \beta \, \Length$
	& \shortstack[l]{$\Gamma$ (Fréchet and $\BV[2]$),
		clustering in $\Sobo[2,2]$
	}
	&\checkmark 
	& ${-}^{\text{a}}$	
	& $-$ 
	& $-$
	\\\hline
	\cite{MR3619438}	
	&$\EulerBernoulli$ &$\Gamma$ (weak-$*$ in $(C^0)'$)
	& ${-}^{\text{b}}$ 
	&\checkmark
    	& $-$ & \checkmark
	\\\hline
	\cite{MR3645030}	
	& $\cF$ 
	&$\Gamma$ (weak-$*$ in $(C^0)'$)
	& ${-}^{\text{b}}$  	
	&\checkmark
	& $-$ & \checkmark
	\\\hline
	\cite{doi:10.1177/1081286517707997}
	& $\EulerBernoulli$ & $\Gamma$ (weak-$\Sobo[2,2]$)
	& ${-}^{\text{c}}$
	&\checkmark
	& $-$ 
	& $-$
	\\\hline
	\text{ours}
	&$\EulerBernoulli$ 
	&\shortstack[l]{Hausdorff ($\Sobo[2,p]$, $p\in \intervalco{2,\infty}$)
	}
	& \checkmark 
	&\checkmark 
	&\checkmark 
	&\checkmark
\end{tabular}
\end{center}
\caption{
Brief overview on the literature related to the convergence of discrete elastica.\newline
$\EulerBernoulli^p(\Curve) = \int \nabs{\kappa}^p \, \nabs{\gamma'}\, \dd t$\newline
$\cF(\Curve) = \int (f(\nabs{\Curve'})\!+\!g(\nabs{\Curve''}) \, \dd t$ where $f$, $g$ convex\newline
${}^{\text{a}}$: recovery sequence does not satisfy this constraint\newline
${}^{\text{b}}$: clamped boundary conditions only at one end\newline
${}^{\text{c}}$: only closed curves are discussed
}
\label{tab:CitationonGammaConvergence}
\end{table}

The discrete problem is to find the set $\DiscMinimizers$ of minimizers of $\DiscEulerBernoulli$ restricted to the feasible set $\DiscFeasible$ of all polygons in discrete arc length parameterization subject to
the \emph{same} fixed first order boundary conditions as in the smooth setting above.  
Here, \emph{discrete arc length parameterization} means that an edge in the image of the piecewise linear parameterization has the same length as the corresponding edge of $\Triangulation$.

Convergence of discrete Euler elastica towards their smooth counterparts has previously been considered within the context of  $\Gamma$-convergence (also denoted $\epi$-con\-ver\-gence by some authors).
However, each of the treatments we know of relaxes at least one of the constraints in a significant manner, and several approaches show $\Gamma$-convergence with respect to rather coarse topologies. Also, all but one of these approaches require equilateral polygons in order to enable symmetric finite differencing. Among previous approaches, \cite{MR1880530} comes close to our goals, while our result is stronger by showing Hausdorff-convergence in $W^{2,p}$ for $p \in \intervalco{2,\infty}$. \autoref{tab:CitationonGammaConvergence} summarizes the situation. 

\begin{figure}[t]
\capstart
\setkeys{Gin}{%
        trim = 20 20 20 180 , 
        clip=true, 
        width=0.49\textwidth
    }
    \presetkeys{Gin}{clip}{}
\begin{center}
\includegraphics{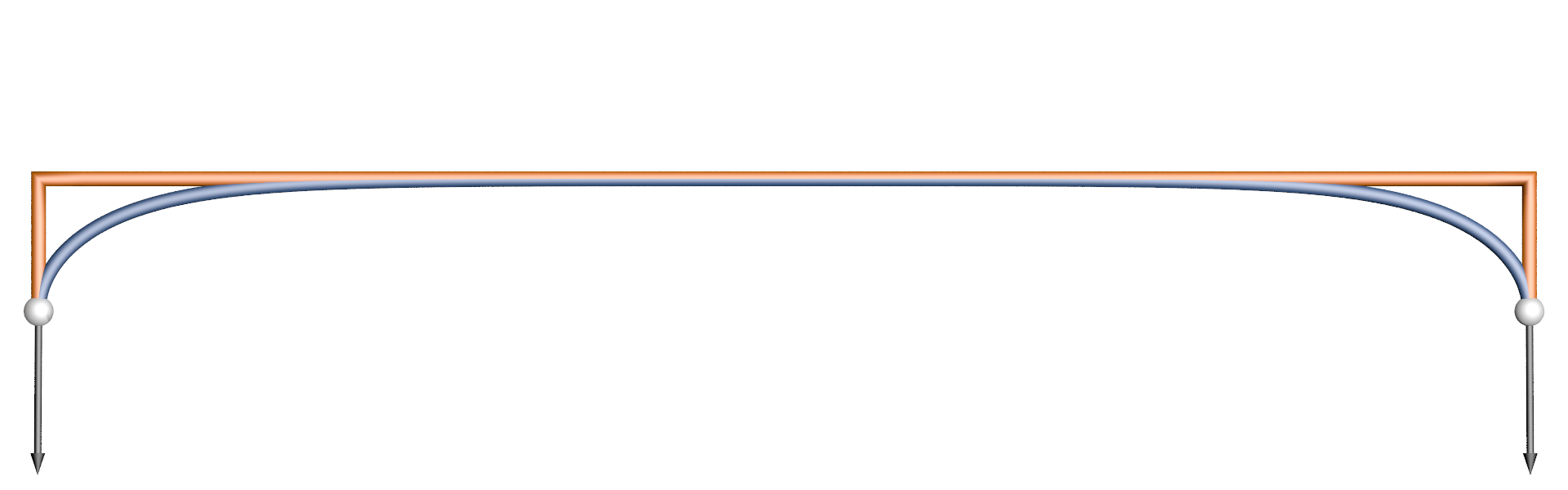}
\includegraphics{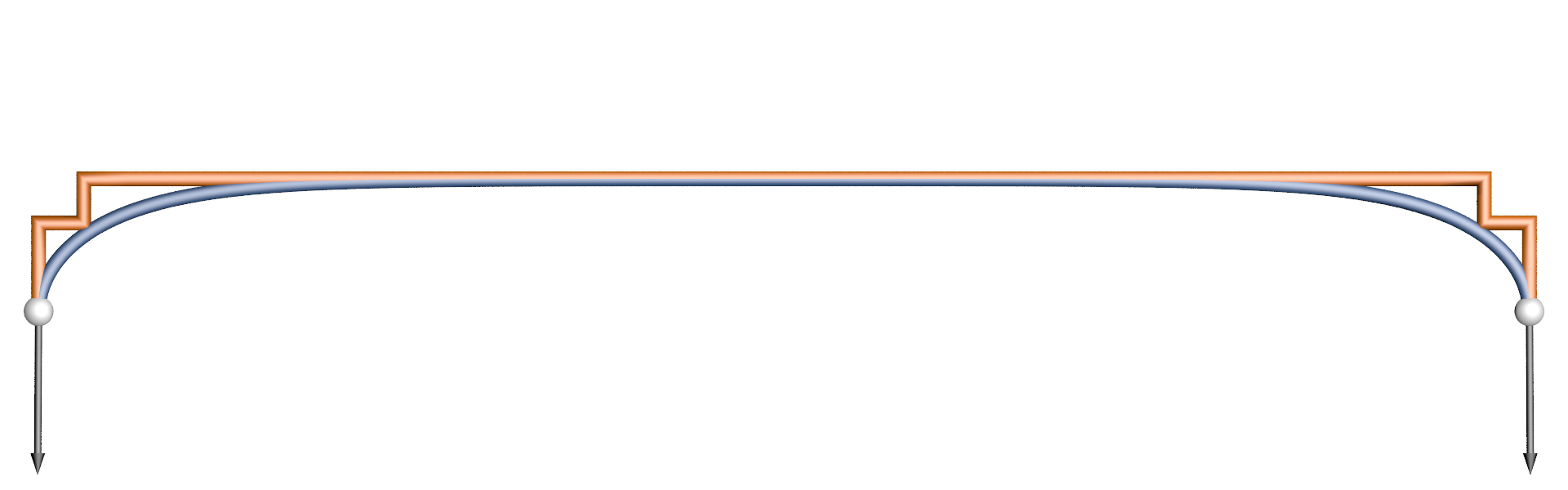}
\end{center}
\caption{Two polygonal curves (orange) that have exactly the same length and approximate a smooth elastic curve (blue) for vertically clamped edges. The energies of the discrete curves, however, differ by a factor of three, independent of their resolution. Small perturbations of vertex positions may thus lead to drastic changes in elastic energy.
}
\label{fig:Weierstrass}
\end{figure}

One of the challenges of $\Gamma$-convergence for discrete elasticae arises from the length constraint, which might be violated when approximating a smooth curve by a sequence of polygons. Repairing this constraint violation requires to change a polygon's vertex positions. Changing vertex positions, however, might drastically affect the energy. To illustrate this phenomenon, consider the example of an elastica in 2D with \emph{vertically} clamped boundaries at points $(-a, 0)$ and $(0,a)$ and with prescribed length close to $2a$, see \autoref{fig:Weierstrass}. Consider furthermore two polygonal approximations: (i) a rectangle respecting the boundary conditions whose edges are uniformly subdivided and (ii) the same shape as in (i) but now with the two corners flipped inwards, see \autoref{fig:Weierstrass}. Both of these polygonal shapes somewhat approximate the smooth solution and both polygons have exactly the same length, but their energies differ by a factor of three, while both energies are arbitrarily large depending on the amount of subdivision. This illustrates that small changes of vertex positions result in small (or even no) change in total length but might yield comparatively large changes of elastic energy. 

While $\Gamma$-convergence is a very satisfactory, qualitative result from the perspective of homogenization of discrete mechanical systems, 
it remains somewhat unsatisfactory due to its nonquantitative nature. In a nutshell,
$\Gamma$-convergence of $\cF_n \colon X \to \R$ to $\cF \colon X \to \R$ for $n \to \infty$ in a topological space $X$ implies that cluster points of $\cF_n$-minimizers are minimizers of $\cF$, i.e., that
$
	\textstyle
	\limsup_{n \to \infty} \argmin(\cF_n)
	\ceq 
	\bigcap_{n \to \infty} \overline{\paren{\bigcup_{k \geq n} \argmin(\cF_k)}}
	\subset \argmin(\cF).
$
While this on its own does not imply \emph{clustering}, i.e., that cluster points exist ($\limsup_{n \to \infty} \argmin(\cF_n) \neq \emptyset$), the latter can often be shown by utilizing uniform compactness properties of lower level sets of the functions $\set{\cF_k | k \geq n}$.
However, $\Gamma$-convergence does not guarantee that all minimizers of $\cF$ can be obtained as cluster points
($\limsup_{n \to \infty} \argmin(\cF_k) = \argmin(\cF)$) or that
$\argmin(\cF_k)$ converges to $\argmin(\cF)$ in the sense of Kuratowski or even Hausdorff.
Indeed, as is well know, this cannot be expected in general. 
For an illustration consider the example in \autoref{fig:TiltedMexicanHatAlmostMinimizers}. Here, sequences of minimizers of  $\cF_n$ converge to two distinct points, while the minimizing set of $\cF$ is a circle. The lack of convergence of minimizers of $\cF_n$  to the actual minimizer of $\cF$ is caused by symmetry breaking of $\cF_n$ vs. $\cF$---very similar to what happens when discretizing parameterization invariant optimization problems for immersed curves and surfaces---as, e.g., the Euler elastica problem.

\begin{figure}
\begin{center}
\includegraphics[width=	\textwidth]{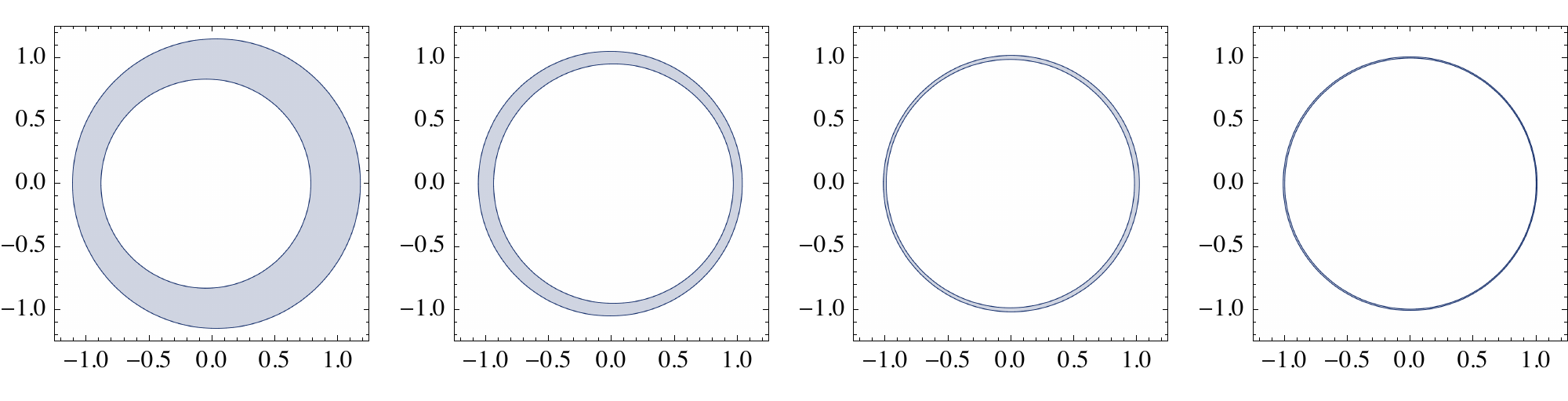}
\label{fig:TiltedMexicanHatAlmostMinimizers}
\caption{The \emph{almost minimizing sets} $\argmin^\frac{3}{n}(\cF_n)$ of the tilted potentials $\cF_n(x) = (1-\nabs{x}^2)^2 - (-1)^n \frac{1}{n} \frac{x_1}{1+\nabs{x}^2}$ converge (as  sets) to the minimizer of the potential $\cF(x) = (1-\nabs{x}^2)^2$ (the unit circle). In contrast, while $\cF_n$ $\Gamma$-converges to $\cF$, the respective minimizers ($\set{(1,0)}$ for even $n$ and $\set{(-1,0)}$ for odd $n$) \emph{do not} converge to the minimizers of $\cF$. 
}
\end{center}
\end{figure}

The problem illustrated in this example can be overcome by shifting attention from sets of minimizers to sets of \emph{almost-minimizers} or \emph{$\delta$-minimizers}
\begin{align*}
	\argmin^{\delta}(\cF_n) \ceq \set{x | \text{for all $y$: $\cF_n(x) \leq \cF_n(y) + \delta$}}.
\end{align*}
In this case, significantly more information about $\argmin(\cF)$ can be drawn from $\cF_n$. E.g., in the above example, the sets $\argmin^\frac{3}{n}(\cF_n) = \set{x| \cF(x) \leq \frac{1}{n}}$ converge uniformly to $\argmin(\cF)$ (see \autoref{fig:TiltedMexicanHatAlmostMinimizers}). This motivates our use of almost minimizers throughout this article.
Yet, focusing solely on $\delta$-minimizers 
\begin{align}
	\DiscMinimizers^\delta 
	&\ceq \argmin^\delta(\DiscEulerBernoulli)
	= \set{\Polygon \in \DiscFeasible | \text{for all $\OtherPolygon \in \DiscFeasible \colon \DiscEulerBernoulli (\Polygon) \leq \DiscEulerBernoulli (\OtherPolygon) + \delta$}}
	\label{eq:DiscreteAlmostMinimizers}
\end{align}
would require the extraction of information about rather \emph{generic} elements of the search space.
In contrast, minimizers are ``tamer'' than generic elements of the search space, usually formulated as \emph{\apriori estimates} on certain Sobolev norms. We therefore restrict our attention to the subset of almost minimizers that satisfy the same regularity properties as the actual minimizers.

We encode these \apriori assumptions into the two sets
\begin{align}
	\Priors &\ceq 
	\textstyle	
	\set{\Curve \in \Feasible
	| 
			\seminorm{\Curve}_{\Sobo[2,\infty]},\;
			\seminorm{\Curve}_{\Sobo[3,\infty]}
			\leq \EnergyBound_1
	},
	\label{eq:SmoothPriors}
	\\
	\DiscPriors
	&\ceq
	\textstyle
	\set{\Polygon \in \DiscFeasible 
		| 
		\nseminorm{\Polygon}_{\sobo[2,\infty][\Polygon]} ,
		\,
		\nseminorm{\Polygon}_{\tv[3][\Polygon]} \leq \EnergyBound_2
	},
	\label{eq:DiscretePriors}	
\end{align}
where $\EnergyBound_1$ and $\EnergyBound_2 \geq 0$ are suitable constants. The respective norms are specified in \autoref{sec:SmoothSetting} and \autoref{sec:DiscreteSetting}. These \apriori assumptions are justified by regularity properties of smooth and discrete minimizers. In the smooth setting, regularity of minimizers (see \autoref{lem:SmoothRegularity}) can be verified in various ways, e.g., by invoking elliptic integrals. Here we present a functional analytic approach since this approach can be closely mimicked in the discrete case. Our discrete regularity result (\autoref{lem:DiscreteRegularity}) appears to be new and might be of interest in its own right.

\subsection{Main result}

Let $\Feasible$ and $\DiscFeasible$ denote the spaces of smooth and discrete feasible configurations (i.e., those satisfying the boundary conditions). Let  $\Priors$ and $\DiscPriors$ denote the sets encoding our smooth and discrete \apriori assumptions (see \eqref{eq:SmoothPriors} and \eqref{eq:DiscretePriors}), and let $\Minimizers$ and $\DiscMinimizers$ denote the sets of smooth and discrete minimizers, respectively. These  sets satisfy the following inclusions:
\begin{align*}
\Minimizers \subset\Priors\subset \Feasible 
\quad\text{and} \quad
\DiscMinimizers \subset\DiscPriors\subset\DiscFeasible . 
\end{align*}
We rely  on a \emph{reconstruction operator} 
$\Reconstruction \colon 
\DiscPriors \to \Feasible$ and a \emph{sampling operator} 
$\Sampling \colon 
\Priors \to \DiscFeasible$, taking feasible polygons to smooth feasible curves and vice-versa. We provide the requisite reconstruction and sampling operators in \autoref{sec:Reconstruction} and \autoref{sec:Sampling}. We first provide \emph{approximate} reconstruction and sampling operators 
$\ReconstructionApprox$ and $\SamplingApprox$
that map 
$\DiscPriors$ and $\Priors$ into sufficiently small vicinities of
$\Feasible$ and $\DiscFeasible$, respectively. The main idea for these \emph{approximate} operators is that they only satisfy the boundary conditions and the length constraint \emph{approximately} but not necessarily exactly. 
We analyze  properties of these operators in \autoref{prop:ApproximateReconstructionTheorem} and \autoref{prop:ApproximateSamplingTheorem}. We then
apply a Newton-Kantorovich-type theorem
in order to show that \emph{exact} reconstruction and sampling operators $\Reconstruction$ and $\Sampling$ (i.e., those that satisfy the requisite constraints exactly) can be obtained from $\ReconstructionApprox$ and $\SamplingApprox$ by sufficiently small perturbations (see \autoref{prop:RestorationOperator} and \autoref{prop:DiscreteRestorationOperator}). In view of the example given in \autoref{fig:Weierstrass}, it is by no means obvious that this is possible while keeping the length constraint \emph{and} maintaining a useful energy bound. These results then lead to \autoref{theo:ReconstructionTheorem}, \autoref{theo:SamplingTheorem}, and \autoref{cor:Loop}, which are required for the proof of our main result:

\begin{theorem}[Convergence Theorem]\label{theo:ConvergenceTheorem}
Fix a prescribed curve length $L$ and commensurable boundary conditions. 
Denote by $\MaxRadius(\Triangulation)$ the maximum edge length of a partition $\Triangulation$ of the domain $\varSigma$, and let $\DiscTestMap \colon \DiscFeasible \to \SoboC[1,\infty]$  denote the interpolation of vertices of discrete curves by continuous, piecewise affine curves. 
Then for each $p \in \intervalco{2,\infty}$  there is a constant $C \geq 0$ such that one has the following convergence in Hausdorff distances:
\begin{align*}
	&\lim_{\MaxRadius(\Triangulation) \to 0} \dist_{\Sobo[2,p]} \bigparen{\Minimizers , \Reconstruction \bigparen{\DiscPriors \cap \DiscMinimizers^{C \MaxRadius(\Triangulation)}}} = 0 \qand
	\\
	&\lim_{\MaxRadius(\Triangulation) \to 0} \dist_{\Sobo[1,\infty]} \bigparen{\Minimizers, \DiscTestMap \bigparen{\DiscPriors \cap \DiscMinimizers^{C \MaxRadius(\Triangulation)}}} = 0.	
\end{align*}
\end{theorem}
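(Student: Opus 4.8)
The plan is to reduce all four one-sided Hausdorff estimates (the two limits each splitting into ``$\Minimizers$ is covered'' and ``the discrete images are covered'') to a single quantitative input, the two-sided energy comparison
\begin{align*}
	\bigabs{\inf\nolimits_{\DiscFeasible}\DiscEulerBernoulli-\min\nolimits_{\Feasible}\EulerBernoulli}\leq C\,\MaxRadius(\Triangulation),
\end{align*}
and then to combine it with the operator estimates of \autoref{theo:ReconstructionTheorem}, \autoref{theo:SamplingTheorem} and \autoref{cor:Loop} by soft arguments. To get ``$\leq$'' we pick a smooth elastica $\Curve^\star\in\Minimizers\subset\Priors$ (nonempty by commensurability), note that for $\MaxRadius(\Triangulation)$ small enough $\Sampling$ maps $\Priors$ into $\DiscPriors$ (a suitable choice of $\EnergyBound_2$, cf.\ \autoref{theo:SamplingTheorem}), and use its energy estimate to bound $\inf_{\DiscFeasible}\DiscEulerBernoulli\leq\DiscEulerBernoulli(\Sampling(\Curve^\star))\leq\EulerBernoulli(\Curve^\star)+C\MaxRadius(\Triangulation)$. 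To get ``$\geq$'' we first observe that $\DiscMinimizers$ is nonempty --- the sublevel sets of $\DiscEulerBernoulli$ on $\DiscFeasible$ are compact, since discrete arc-length polygons of total length $L$ with fixed ends and bounded energy stay uniformly away from the degenerate turning angles $\pm\pi$ --- and then invoke \autoref{lem:DiscreteRegularity} to conclude $\DiscMinimizers\subset\DiscPriors$. Thus $\Reconstruction$ applies to a discrete minimizer $\Polygon^\star$, and \autoref{theo:ReconstructionTheorem} yields $\min_{\Feasible}\EulerBernoulli\leq\EulerBernoulli(\Reconstruction(\Polygon^\star))\leq\DiscEulerBernoulli(\Polygon^\star)+C\MaxRadius(\Triangulation)$.

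Next, to see that $\Minimizers$ is covered, take $\Curve\in\Minimizers$. Then $\Sampling(\Curve)\in\DiscPriors$, and chaining the sampling energy estimate with the comparison above,
\begin{align*}
	\DiscEulerBernoulli(\Sampling(\Curve))
	&\leq\EulerBernoulli(\Curve)+C\MaxRadius(\Triangulation)
	=\min\nolimits_{\Feasible}\EulerBernoulli+C\MaxRadius(\Triangulation)
	\\
	&\leq\inf\nolimits_{\DiscFeasible}\DiscEulerBernoulli+2\,C\,\MaxRadius(\Triangulation),
\end{align*}
so, fixing $C$ once and for all large enough to absorb these finitely many comparison constants, $\Sampling(\Curve)\in\DiscPriors\cap\DiscMinimizers^{C\MaxRadius(\Triangulation)}$. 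Hence $\Reconstruction(\Sampling(\Curve))$ lies in the set of the first limit and $\DiscTestMap(\Sampling(\Curve))$ in that of the second; the loop estimate \autoref{cor:Loop} controls $\norm{\Reconstruction(\Sampling(\Curve))-\Curve}_{\Sobo[2,p]}$ and \autoref{theo:SamplingTheorem} (interpolating a sampled smooth curve) controls $\norm{\DiscTestMap(\Sampling(\Curve))-\Curve}_{\Sobo[1,\infty]}$, both by a modulus $\omega(\MaxRadius(\Triangulation))\to0$ uniform over $\Priors\supseteq\Minimizers$. Taking the supremum over $\Curve\in\Minimizers$ makes the two corresponding one-sided distances vanish.

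For the converse direction we argue by contradiction. If, say, the first of the two limits failed, we would find partitions $\Triangulation_n$ with $\MaxRadius(\Triangulation_n)\to0$, polygons $\Polygon_n\in\DiscPriors\cap\DiscMinimizers^{C\MaxRadius(\Triangulation)}$ (all quantities evaluated at $\Triangulation=\Triangulation_n$) and an $\epsi_0>0$ with $\dist_{\Sobo[2,p]}(\Reconstruction(\Polygon_n),\Minimizers)\geq\epsi_0$. By \autoref{theo:ReconstructionTheorem} the curves $\Curve_n\ceq\Reconstruction(\Polygon_n)$ remain in a fixed set that is bounded in $\Sobo[2,\infty]$, has speeds bounded above and below, and whose members have second derivatives of uniformly bounded total variation; by Helly's selection theorem and dominated convergence such a set is relatively compact in $\Sobo[2,p]$ for every $p\in\intervalco{2,\infty}$ --- and this is exactly where $p=\infty$ must be excluded. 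Passing to a subsequence, $\Curve_n\to\Curve_\infty$ in $\Sobo[2,p]$; feasibility (prescribed ends, length $L$, positive speed) is preserved, so $\Curve_\infty\in\Feasible$, and since $\EulerBernoulli$ is lower semicontinuous along such sequences, the reconstruction energy estimate, the almost-minimality of $\Polygon_n$ and the energy comparison give
\begin{align*}
	\EulerBernoulli(\Curve_\infty)
	&\leq\liminf_{n\to\infty}\EulerBernoulli(\Curve_n)
	\leq\liminf_{n\to\infty}\bigparen{\DiscEulerBernoulli(\Polygon_n)+C\MaxRadius(\Triangulation_n)}
	\\
	&\leq\liminf_{n\to\infty}\bigparen{\min\nolimits_{\Feasible}\EulerBernoulli+3\,C\,\MaxRadius(\Triangulation_n)}
	=\min\nolimits_{\Feasible}\EulerBernoulli,
\end{align*}
so $\Curve_\infty\in\Minimizers$, contradicting $\dist_{\Sobo[2,p]}(\Curve_n,\Minimizers)\geq\epsi_0$. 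Along the same subsequence $\Curve_n\to\Curve_\infty$ also in $\Sobo[1,\infty]$ (one-dimensional Sobolev embedding $\Sobo[2,p]\hookrightarrow\Sobo[1,\infty]$), and $\norm{\DiscTestMap(\Polygon_n)-\Reconstruction(\Polygon_n)}_{\Sobo[1,\infty]}\to0$ since $\Reconstruction$ stays $\Sobo[1,\infty]$-close to the affine interpolant by \autoref{theo:ReconstructionTheorem}; hence $\DiscTestMap(\Polygon_n)\to\Curve_\infty\in\Minimizers$ in $\Sobo[1,\infty]$, which likewise rules out a positive separation and proves the second limit.

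I expect the main obstacle to be the energy comparison of the first step, and in particular its lower bound ``$\geq$'': it hinges on knowing that discrete minimizers not only exist but also obey the \emph{uniform} a-priori bounds defining $\DiscPriors$, i.e.\ on \autoref{lem:DiscreteRegularity} (the genuinely new discrete ingredient), and on the fact that $\Reconstruction$ preserves an honest energy bound while enforcing the length constraint \emph{exactly} --- the subtlety highlighted in \autoref{fig:Weierstrass}, which is resolved earlier in \autoref{theo:ReconstructionTheorem}. The remaining work is bookkeeping (making one constant $C$ serve throughout) together with the compactness in the converse step, which is soft but is what confines the result to $p<\infty$.
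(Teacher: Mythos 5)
Your proposal is correct and follows essentially the same route as the paper: energy consistency of $\Reconstruction$ and $\Sampling$ gives the comparison of minimal values and the sublevel-set inclusions, sampling of smooth minimizers combined with \autoref{cor:Loop} yields the covering of $\Minimizers$, and the $\TV[3]$-bound from \autoref{theo:ReconstructionTheorem} supplies the $\Sobo[2,p]$-compactness ($p<\infty$) needed for the reverse inclusion. The only differences are presentational — you run a sequential compactness contradiction where the paper packages the same argument into a modulus function $f(\delta)\to 0$, and you route the min-value comparison through actual discrete minimizers (whose existence is immediate since $\DiscFeasible$ is compact and $\DiscEulerBernoulli$ continuous) rather than minimizing sequences in $\DiscPriors$.
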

Notice that although sampling operators do not appear explicitly in this result, they play a prominent role in the proof since they guarantee existence of discrete almost minimizers in the vicinity of every smooth minimizer.

By relying on \apriori assumptions, our result is different from $\Gamma$-convergence. Indeed, by restricting to the sets  $\Priors$ and $\DiscPriors$, we avoid the need for recovery sequences for every element in configuration space. We thus obtain a stronger convergence result in the sense that \emph{all} discrete minimizers are uniformly close to the set of smooth minimizers $\Minimizers$ with respect to 
$\Sobo[2,p]$-norm, 
i.e., there exists a function $f \colon \intervalcc{0,\infty} \to \intervalcc{0,\infty}$, continuous at $0$ and with $f(0) = 0$, such that
\begin{align*}
	\sup_{\Polygon \in \DiscMinimizers} 
	\inf_{\Curve \in \Minimizers}
	\nnorm{\Curve - \Reconstruction(\Polygon)}_{\Sobo[2,p]} \leq f(\MaxRadius(\Triangulation)).
\end{align*}
Since $p>2$ is allowed, \emph{we obtain convergence in a topology finer than the one of the energy space}. 

\begin{proof}
We here provide the proof of our main result since it is fairly short once our results for reconstruction and sampling operators from \autoref{theo:ReconstructionTheorem} and \autoref{theo:SamplingTheorem} are established. Indeed, as will become evident, these two theorems together with \autoref{cor:Loop} suffice in order to complete the proof. Establishing these theorems is the technically involved part of our exposition, and the following sections are devoted to proving these results.
We only consider those partitions $\Triangulation$ with maximum edge length $\MaxRadius = \MaxRadius(\Triangulation)$ for which $\MaxRadius$ is small enough such that \autoref{theo:ReconstructionTheorem} and \autoref{theo:SamplingTheorem} can be applied. 

For a subset $A$ of a metric space with metric $d$ and for $r>0$, let the \emph{$r$-thickening} of $A$ be
\begin{align*}
	\textstyle
	\ClosedBall{A}{r}
	\ceq 
	\set{x | \text{there is } a \in A \colon d(x,a) \leq r}
	=
	\bigcup_{a \in A} \ClosedBall{a}{r}.
\end{align*}
We fix $p\in \intervalco{2,\infty}$ and unless stated otherwise, all balls considered in this subsection are with respect to the $\Sobo[2,p]$-norm. In order to show convergence in the $\Sobo[2,p]$-Hausdorff distance, we show that 
\begin{align}
	\Minimizers 
	\subset
	\bigClosedBall{\Reconstruction (\DiscPriors \cap \DiscMinimizers^{C \MaxRadius})}{C \, \MaxRadius}
	\quad \text{and}\quad
	\Reconstruction( \DiscPriors \cap \DiscMinimizers^{C \MaxRadius})
	\subset \bigClosedBall{\Minimizers}{f(h)}
	\label{eq:Hsdff}
\end{align}
for a  monotonically increasing function $f \colon \intervalco{0,\infty} \to \R$ with $0=f(0) = \lim_{\delta \to 0 } f(\delta)$. Since the $\Sobo[2,p]$-norm dominates the $\Sobo[1,\infty]$-norm, Hausdorff convergence of $\DiscTestMap \bigparen{
			\DiscPriors
			\cap
			\DiscMinimizers^{\delta}
		}$
to $\Minimizers$ with respect to the $\Sobo[1,\infty]$-distance then follows from the above and
Statement~\ref{item:RProximity} in \autoref{theo:ReconstructionTheorem}.

In order to prove \eqref{eq:Hsdff}, we first note that 
the consistency estimates from \autoref{theo:ReconstructionTheorem} and \autoref{theo:SamplingTheorem} 
can be summarized as follows:
\begin{align*}
	\EulerBernoulli \circ \Reconstruction(\Polygon) 
	&\leq 
	\DiscEulerBernoulli(\Polygon) + C \, \MaxRadius 
	\quad
	\text{for all $\Polygon \in \DiscPriors$ and}
	\\
	\DiscEulerBernoulli \circ \Sampling(\Curve) 
	&\leq 
	\EulerBernoulli(\Curve) + C \, \MaxRadius
	\quad 
	\text{for all $\Curve \in \Priors$.}
\end{align*}
From these inequalities we deduce two consequences. First, by choosing minimizing sequences $(\Curve_k)_{k\in\N}$ of $\EulerBernoulli$ in $\Priors$ and $(\Polygon_{k})_{k\in\N}$ of $\DiscEulerBernoulli$ in $\DiscPriors$, we obtain that 
\begin{align*}
	\setlength{\arraycolsep}{0.1em}
	\begin{array}{ccccccccc}
	0 
	&\leq 
	&\inf (\EulerBernoulli)
	&\leq 
	&\EulerBernoulli \circ \Reconstruction(\Polygon_k) 
	&\leq 
	&\DiscEulerBernoulli(\Polygon_k) + C \, \MaxRadius
	&\stackrel{k \to \infty}{\longrightarrow} 
	&\inf(\DiscEulerBernoulli) + C \, \MaxRadius,
	\\
	0 
	&\leq 
	&\inf (\DiscEulerBernoulli) 
	&\leq 
	&\DiscEulerBernoulli \circ \Sampling(\Curve_k) 
	&\leq 
	&\EulerBernoulli(\Curve_k) + C \, \MaxRadius
	&\stackrel{k \to \infty}{\longrightarrow} 
	&\inf(\EulerBernoulli) + C \, \MaxRadius.	
	\end{array}
\end{align*}
This shows that the minimal values of the smooth and discrete problems satisfy 
\begin{align*}
	\abs{\inf(\EulerBernoulli) - \inf(\DiscEulerBernoulli)} \leq C \, \MaxRadius.
\end{align*}
Secondly, we deduce that the sets of $\delta$-minimizer are related to each other as follows:
\begin{align}
	\Reconstruction(\DiscPriors \cap \DiscMinimizers^\delta) \subset \Minimizers^{\delta+C \MaxRadius}
	\qand
	\Sampling(\Priors \cap \Minimizers^\delta) \subset \DiscMinimizers^{\delta+C \MaxRadius}.
	\label{eq:SublevelConsistency}
\end{align}
Moreover, \autoref{cor:Loop} and \autoref{theo:SamplingTheorem}  along with $\Minimizers \subset \Priors$ guarantee the following:
\begin{itemize}
\item[(i)] $\sup_{\Curve \in \Minimizers} \nnorm{\Curve - (\Reconstruction \circ \Sampling)(\Curve)}_{\Sobo[2,p]}  \leq C \, \MaxRadius$ and
\item[(ii)] $\Sampling(\Priors) \subset \DiscPriors$.
\end{itemize}
Together (i), (ii), and \eqref{eq:SublevelConsistency} imply that
\begin{align*}
	\Minimizers 
	\subset 
	\bigClosedBall{ (\Reconstruction \circ \Sampling)(\Minimizers)}{C\,\MaxRadius}
	\subset
	\bigClosedBall{\Reconstruction (\DiscPriors \cap \DiscMinimizers^{C \MaxRadius})}{C\,\MaxRadius},
\end{align*}
which is the first part of \eqref{eq:Hsdff}. In order to show the second part of  \eqref{eq:Hsdff}, notice that 
 \autoref{theo:ReconstructionTheorem} implies that $\Reconstruction(\DiscPriors)$ is bounded in $\BVC[3] $. Therefore, since the embedding $\BVC[3] \hookrightarrow \SoboC[2,p]$ is compact for $p< \infty$, we have 
\begin{itemize}
\item[(iii)] $\Reconstruction(\DiscPriors) \subset \Compactum$ with a $\Sobo[2,p]$-compact set $\Compactum$ containing $\Minimizers$.\footnote{The set $\Compactum$ can be chosen as a $\TV[3]$-ball of sufficiently large radius (independent of the partition $\Triangulation$) so that both the $\TV[3]$-bounded sets $\Reconstruction(\DiscPriors)$ and $\Minimizers \subset \Priors$ are contained in it.}
\end{itemize}
Combining this with \eqref{eq:SublevelConsistency}, we obtain the desired inclusion
\begin{align*}
	\Reconstruction(\DiscPriors \cap \DiscMinimizers^{C \MaxRadius}) 
	\subset \Compactum \cap \Minimizers^{2 C \MaxRadius}
	\subset \ClosedBall{\Minimizers}{f(\MaxRadius)},
\end{align*}
where the function $f$ is defined by
\begin{align*}
	f(\delta) \ceq \sup \set{
		d(\Curve,\Minimizers) | \Curve \in \Compactum \cap \Minimizers^{2C\delta}
	}.
\end{align*}
Indeed, because of $\emptyset \neq \Minimizers \subset \Compactum \cap \Minimizers^{2C\delta}$, we have $f(\delta) \geq 0$. 
Since $\EulerBernoulli$ is continuous with respect to the $\Sobo[2,p]$-norm, the lower level sets $\Minimizers^{\delta}$ are closed so that the sets $\Compactum \cap \Minimizers^{2C\delta}$ are compact. This shows that $f$ is well-defined and fulfills $f(\delta)<\infty$ for each $\delta \geq 0$.
Since the sets $\Minimizers^{\delta}$ shrink monotonically to $\Minimizers$ as $\delta \searrow 0$, the function $f(\delta)$ is monotonically decreasing for $\delta \searrow 0$;
hence, the limit $r \ceq \lim_{\delta \searrow 0} f(\delta) \geq 0$ exists.
We are left to show that $r = 0$.
\emph{Assume} that $r > 0$. 
We choose a minimizing sequence $(\Curve_n)_{n \in \N}$ of~$\EulerBernoulli$ as follows:
For each $n \in \N$, we pick $\Curve_n \in \Compactum \cap \Minimizers^{2C/n}$ with
$
	0 < \tfrac{1}{2} r \leq \tfrac{1}{2} f( \tfrac{1}{n}) \leq d(\Curve_n, \Minimizers) \leq f(\tfrac{1}{n}).
$
Since $\Compactum$ is compact and $\EulerBernoulli$ is continuous, this minimizing sequence $(\Curve_n)_{n \in \N}$ has a cluster point $\Curve_\infty \in \Minimizers = \Compactum\cap \Minimizers$. This leads to the \emph{contradiction}
$
	0 = \liminf_{n \to \infty} d(\Curve_n, \Minimizers) \geq \tfrac{1}{2} r > 0,
$
thus establishing the second part of \eqref{eq:Hsdff}, which completes the proof of \autoref{theo:ConvergenceTheorem}.
\end{proof}

\clearpage

\section{Smooth Setting}
\label{sec:SmoothSetting}

In the following, $\Interval \ceq \intervalcc{0,L}$  will denote a compact interval of length $L >0$.
We denote the standard \emph{Sobolev spaces} of functions with $k$ weak derivatives in $L^p(\varSigma;\AmbSpace)$ by $\SoboC[k,p]$ 
and the space of \emph{Lipschitz immersions} into Euclidean space by
\begin{align*}
	\ImmC[1,\infty]
	\ceq \set{
		\Curve \in \SoboC[1,\infty] | 
		\nnorm{\LogStrain_\Curve}_{L^\infty} < \infty
	}.
\end{align*}
Here, $\LogStrain_\Curve$ denotes the \emph{logarithmic strain} of the curve $\Curve$, given by
\begin{align*}
\LogStrain_\Curve \ceq \log( \nabs{\Curve'}).
\end{align*}
This definition of Lipschitz immersions follows the one given in \cite{MR3008339}.\footnote{We would like to point out that our definition is significantly more general than the one used in \cite{MR2118960}, which requires that the constant rank theorem holds true for each Lipschitz immersion.}
Notice that the set of Lipschitz immersions $\ImmC[1,\infty]$ is an open subset of $\SoboC[1,\infty]$.
We denote the \emph{unit tangent vector field} by $\TangentC$.

For $k \geq 1$ and $p \geq1$ with $k-\frac{1}{p} \geq 1$, we define 
\begin{align*}
	\ImmC[k,p]	\ceq \ImmC[1,\infty] \cap \SoboC[k,p].
\end{align*}
Notice that $\ImmC[k,p]$ is an open subset of $\SoboC[k,p]$ because of the Morrey embedding $\SoboC[k,p]\hookrightarrow \SoboC[1,\infty]$. 

For $\Curve \in \ImmC[1,\infty]$, we denote by $\LineElementC (t) \ceq \nabs{\Curve'(t)} \, \dd t$ the line element of $\Curve$ on $\Interval$. For a sufficiently regular mapping $u \colon \Interval \to \AmbSpace$, we define the \emph{differential with respect to unit speed} $\Dtot u$ 
and the \emph{Laplace-Beltrami} operator $\Laplacian_\Curve u$ by
\begin{align*}
	\Dtot u \ceq \nabs{\Curve'}^{-1} \, u'
	\qand
	\Laplacian_\Curve u \ceq \Dtot^2 u. 
\end{align*}
If $\TangentC$ is weakly differentiable, then the \emph{curvature vector} $\CurvatureC$ of $\Curve$ is given by 
\begin{align*}
	\CurvatureC = \Dtot \TangentC = \Laplacian_\Curve \Curve.
\end{align*}

\subsection{Sobolev and TV Norms}
For a weakly differentiable function
$u \in \SoboC[k,p]$, $k \in \N \cup \set{0}$ (and also for scalar-valued functions), we denote the usual Sobolev seminorms by
\begin{align*}
	\textstyle
	\nseminorm{u}_{\Sobo[k,p]} \ceq \Bigparen{ \int_\varSigma \bigabs{\bigparen{\tfrac{\dd}{\dd t}}^k u(t)}^p \, \dd t }^\frac{1}{p}
	\;\text{for $p \in \intervalco{1,\infty}$}
	\qand
	\nseminorm{u}_{\Sobo[k,\infty]} \ceq \esssup_{t \in \Interval} \bigabs{\big(\tfrac{\dd}{\dd t}\big)^k u(t)}.
\end{align*}
We also consider slightly different seminorms that take into account the line element of a curve $\Curve$. Concretely, for $\Curve \in \ImmC[k,\infty]$, we define 
\begin{align*}
	\textstyle
	\nseminorm{u}_{\Sobo[k,p][\Curve]} \ceq \Big( \int_\varSigma \nabs{ \Dtot^k u(t)}^p \, \LineElementC(t)\Big)^\frac{1}{p},
	\;\text{for $p \in \intervalco{1,\infty}$}
	\qand
	\nseminorm{u}_{\Sobo[k,\infty][\Curve]} \ceq \esssup_{t \in \Interval} \nabs{\Dtot^k u(t)}.
\end{align*} 
Both seminorms, $\nseminorm{u}_{\Sobo[k,p]}$ and $\nseminorm{u}_{\Sobo[k,p][\Curve]}$, give rise to respective Sobolev norms in the usual manner.

Analogously, we introduce two versions of total variation seminorms for $k \in \N \cup \set{0}$ by
\begin{align*}
\nseminorm{u}_{\TV[k]} \ceq \nnorm{\bigparen{\tfrac{\dd}{\dd t}}^k u}_{\TV[0]} \qquad \text{and} \qquad \nseminorm{u}_{\TV[k][\Curve]} \ceq \nnorm{\Dtot^k u}_{\TV[0]}, 
\end{align*}
where $\nnorm{\cdot}_{\TV[0]} \ceq \nnorm{\cdot}_{(C^0)'}$ denotes the dual norm of $\nnorm{\cdot}_{C^0}$ on the space $\BVC[0] \ceq (C^0(\Interval;\AmbSpace))'$. The according Banach spaces for $k \geq 1$ are 
$\BVC[k] \ceq \set{\SoboC[k-1,1] | \nseminorm{u}_{\TV[k]}<\infty}$ with norms 
$\nnorm{u}_{\TV[k]} \ceq \nnorm{u}_{\Sobo[k-1,1]} + \nseminorm{u}_{\TV[k]}$
and
$\nnorm{u}_{\TV[k][\Curve]} \ceq \nnorm{u}_{\Sobo[k-1,1][\Curve]} + \nseminorm{u}_{\TV[k][
\Curve]}$ for $k \geq 1$.

The seminorms $\nseminorm{u}_{\Sobo[k,p][\Curve]}$ and $\nseminorm{u}_{\TV[k][\Curve]}$ depend on $\Curve$; they are in some sense more natural and often easier to handle since they employ metric information induced by the immersion.
They coincide with the usual ones whenever $\Curve$ is parameterized by arc length, i.e., when $\nabs{\Curve'(t)} = 1$ for almost all $t \in \Interval$. Of course it might seem preferable to work with arc-length parameterized curves throughout. However, this would lead to cumbersome calculations later on when we will have to compute derivatives in the space of curves. 
In the sequel, we will interchangeably work with both variants ($\Curve$-dependent and $\Curve$-independent), depending on which choice is more convenient. The following lemma ensures that the two variants are equivalent whenever the logarithmic strain $\LogStrain_\Curve$ is sufficiently regular. Additionally, this lemma shows that one can extend (i) the definition  of $\nseminorm{u}_{\Sobo[k,p][\Curve]}$ from $\Curve \in \ImmC[k,\infty]$ to $\Curve \in \ImmC[k,p]$
and (ii) the definition of $\nseminorm{u}_{\TV[k][\Curve]}$ from $\Curve \in \ImmC[k,\infty]$ to $\Curve \in \ImmC[1,\infty] \cap \BVC[k]$. 

\begin{lemma}[Norm Equivalences]\label{lem:NormEquivalences}
Let $k \in \N \cup \{0\}$, $p \in \intervalcc{1,\infty}$.  Then for each $\varLambda \geq 0$ and $\EnergyBound \geq 0$ there is a $C \geq 0$, such that for each $\Curve \in \ImmC[1,\infty] \cap \SoboC[k,p]$ with $\nnorm{\LogStrain_\Curve}_{L^\infty} \leq \varLambda$, the following two statements hold true for all $u \in \SoboC[k,p]$:
\begin{enumerate}
	\item 
	If 
	$\nnorm{\LogStrain_\Curve}_{\Sobo[\max(0,k-1),p][\Curve]} \leq \StrainBound$,
	then
$
	\nnorm{u}_{\Sobo[k,p]} \leq C \, \nnorm{u}_{\Sobo[k,p][\Curve]}.
$
	\item 
	If $\nnorm{\LogStrain_\Curve}_{\Sobo[\max(0,k-1),p]} \leq \StrainBound$,
	then
$
	\nnorm{u}_{\Sobo[k,p][\Curve]} \leq C \, \nnorm{u}_{\Sobo[k,p]}
	.
$
\end{enumerate}
Moreover, if $\Curve \in \ImmC[1,\infty] \cap \BVC[k+1]$ and $u \in  \BVC[k+1]$, then one even has:
\begin{enumerate}
	\setcounter{enumi}{2}
	\item 
	If 
	$\nnorm{\LogStrain_\Curve}_{\TV[k][\Curve]} \leq \StrainBound$,
	then
$
	\nnorm{u}_{\TV[k+1]} \leq C \, \nnorm{u}_{\TV[k+1][\Curve]}.
$
	\item 
	If $\nnorm{\LogStrain_\Curve}_{\TV[k]} \leq \StrainBound$,
	then
$
	\nnorm{u}_{\TV[k+1][\Curve]} \leq C \, \nnorm{u}_{\TV[k+1]}
	.
$
\end{enumerate}
\end{lemma}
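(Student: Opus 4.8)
The plan is to reduce all four assertions to a single algebraic expansion, combined with the one-dimensional Morrey inequality and H\"older's inequality. Throughout, write $\sigma \ceq \LogStrain_\Curve$, so that $\nabs{\Curve'} = e^{\sigma}$, $\Dtot = e^{-\sigma}\tfrac{\dd}{\dd t}$, and $\nnorm{\sigma}_{L^\infty}\le\varLambda$ is the same as $e^{-\varLambda}\le\nabs{\Curve'}\le e^{\varLambda}$. The case $k=0$ is immediate: the measures $\LineElementC$ and $\ReferenceMeasure$ are mutually absolutely continuous with Radon--Nikodym density in $\intervalcc{e^{-\varLambda},e^{\varLambda}}$, which gives the first and second assertions with $C=e^{\varLambda/p}$; the third and fourth follow the same way, since multiplying a finite measure by the bounded Borel function $\nabs{\Curve'}^{\pm 1}$ changes its total-variation norm by at most the factor $e^{\varLambda}$. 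So assume $k\ge 1$ from now on.

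The algebraic core is the iterated product rule for $\Dtot = e^{-\sigma}\tfrac{\dd}{\dd t}$: there are universal polynomials $P_{k,j}$, $1\le j\le k$, with $P_{k,k}=1$ and with every monomial of $P_{k,j}$ a product of derivatives of $\sigma$ of \emph{total} order exactly $k-j$, such that
\begin{align*}
\Dtot^{k} u
&=
e^{-k\sigma}\,\Bigparen{ \tfrac{\dd^{k}}{\dd t^{k}} u + \textstyle\sum_{j=1}^{k-1} P_{k,j}\bigparen{\sigma',\dots,\sigma^{(k-j)}}\,\tfrac{\dd^{j}}{\dd t^{j}} u }.
\end{align*}
Solving for $\tfrac{\dd^{k}}{\dd t^{k}}u$ (equivalently: exchanging the roles of $\tfrac{\dd}{\dd t}$ and $\Dtot$ and of $\sigma$ and $-\sigma$) gives the inverse relation $\tfrac{\dd^{k}}{\dd t^{k}}u = \sum_{j=1}^{k} e^{j\sigma}\,\tilde P_{k,j}\bigparen{\Dtot\sigma,\dots,\Dtot^{k-j}\sigma}\,\Dtot^{j}u$ with polynomials $\tilde P_{k,j}$ of the same structure. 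I would prove both identities by a short induction on $k$. I would also record the $\Curve$-dependent Morrey inequality $\nnorm{\Dtot^{i}u}_{L^\infty}\le C\,\nnorm{u}_{\Sobo[m,p][\Curve]}$ for $0\le i\le m-1$ (with $C$ depending on $\varLambda$, $L$, $m$, $p$); this follows from the classical one-dimensional Morrey inequality after the bi-Lipschitz change of variables to arc length (whose total length lies in $\intervalcc{Le^{-\varLambda},Le^{\varLambda}}$), since $\Dtot$ is the derivative with respect to that parameter.

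The second assertion is then a termwise estimate of the displayed expansion (applied at each order $\le k$). Its crux is the elementary observation that in any monomial of $P_{k,j}(\sigma',\dots,\sigma^{(k-j)})\,\tfrac{\dd^{j}}{\dd t^{j}}u$ at most one factor can fail to lie in $L^\infty$: indeed, $\tfrac{\dd^{j}}{\dd t^{j}}u\in L^\infty$ as soon as $j\le k-1$ (Morrey, using $u\in\SoboC[k,p]$), a factor $\sigma^{(i)}$ lies in $L^\infty$ as soon as $i\le k-2$ (Morrey, using $\nnorm{\sigma}_{\Sobo[k-1,p]}\le\StrainBound$, the hypothesis of the second assertion), and no monomial can carry two ``rough'' factors, since its total differentiation order is $k$ while it also contains a factor $\tfrac{\dd^{j}}{\dd t^{j}}u$ with $j\ge 1$. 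Hence each monomial is the product of one factor bounded in $L^p$ (by $\StrainBound$, or by $\nnorm{u}_{\Sobo[k,p]}$) times finitely many factors bounded in $L^\infty$ (by powers of $\StrainBound$ and by $\nnorm{u}_{\Sobo[k,p]}$); H\"older's inequality together with $\nnorm{e^{-k\sigma}}_{L^\infty}\le e^{k\varLambda}$ gives $\nnorm{\Dtot^{k}u}_{L^p}\le C\,\nnorm{u}_{\Sobo[k,p]}$, and the lower-order seminorms are bounded in the same (in fact easier) way. The first assertion is the mirror-image argument applied to the inverse relation, now using the $\Curve$-dependent Morrey inequality and $\ReferenceMeasure \le e^{\varLambda}\,\LineElementC$; the hypothesis $\nnorm{\sigma}_{\Sobo[\max(0,k-1),p][\Curve]}\le\StrainBound$ is precisely what controls the coefficients $\tilde P_{k,j}$, which only involve $\Dtot$-derivatives of $\sigma$ of order at most $k-1$.

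The third and fourth assertions use the same expansion with $k$ replaced by $k+1$. The new feature is that for $u,\Curve\in\BVC[k+1]$ — so that $u,\sigma\in\SoboC[k,1]$ with their $k$-th derivatives of bounded variation, hence in $L^\infty$ — \emph{every} factor $\tfrac{\dd^{j}}{\dd t^{j}}u$ with $1\le j\le k$ and every factor $\sigma^{(i)}$ with $1\le i\le k$ is bounded, so the entire lower-order block $\sum_{j=1}^{k}P_{k+1,j}(\dots)\,\tfrac{\dd^{j}}{\dd t^{j}}u$ is an $L^\infty$- (hence $L^1$-) function, controlled by $\nnorm{u}_{\TV[k+1][\Curve]}$ via the first and second assertions with $p=1$. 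Only the top term $\tfrac{\dd^{k+1}}{\dd t^{k+1}}u$ is a genuine measure, and the coefficient $e^{\pm(k+1)\sigma}$ is a bounded Borel function, so it multiplies finite measures while changing the total-variation norm by at most $e^{(k+1)\varLambda}$. Rearranging $\tfrac{\dd^{k+1}}{\dd t^{k+1}}u = e^{(k+1)\sigma}\,\Dtot^{k+1}u - e^{(k+1)\sigma}\cdot(\text{lower-order block})$ and passing to $\nnorm{\cdot}_{(C^0)'}$ proves the third assertion; the fourth is the symmetric computation. I expect the only genuinely delicate point of the whole proof to be the order-counting just described, which guarantees that no product of two rough factors — two merely-$L^p$ functions, or a measure times an unbounded object — ever appears. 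The apparent circularity (the coefficients depend on $\sigma$, whose norm equivalence is not yet known at top order) is harmless, because those coefficients involve $\sigma$ only \emph{strictly} below the top order, so the stated hypotheses already furnish the needed bounds without any conversion between the two families of norms.
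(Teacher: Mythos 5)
Your proposal follows essentially the same strategy as the paper's proof---the product rule for $\Dtot=\nabs{\Curve'}^{-1}\tfrac{\dd}{\dd t}$, H\"older's inequality, and the one-dimensional Sobolev embedding. The paper works out the cases $k=0,1,2$ explicitly and then remarks that the general case follows ``by applying the product rule successively''; your $P_{k,j}$-expansion together with the order-counting argument is exactly the right way to make that step rigorous, and the $\Curve$-dependent Morrey inequality via bi-Lipschitz reparameterization to arc length is fine. (Minor typo: the inverse expansion should carry the common factor $\ee^{k\sigma}$, not $\ee^{j\sigma}$; this does not affect any estimate.)

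There is, however, a genuine slip in the $BV$ part. You claim that $\Curve\in\BVC[k+1]$ yields $\sigma\in\SoboC[k,1]$ with $\sigma^{(k)}$ of bounded variation, i.e.\ $\sigma\in\BVC[k+1]$, and hence $\sigma^{(k)}\in L^\infty$. This is off by one: $\Curve\in\BVC[k+1]$ only gives $\Curve'\in\BVC[k]$, and composing with the Lipschitz map $x\mapsto\log\nabs{x}$ yields $\sigma\in\BVC[k]$, which is also precisely what the hypotheses of statements (3) and (4) assume. So $\sigma^{(k-1)}\in L^\infty$, but $\sigma^{(k)}$ is merely a finite measure. Your claim that the lower-order block $\sum_{j=1}^{k}P_{k+1,j}(\sigma',\dots)\,u^{(j)}$ is an $L^\infty$-function is therefore false: $P_{k+1,1}$ contains the monomial $\pm\sigma^{(k)}$ (already $P_{3,1}=2(\sigma')^{2}-\sigma''$), so the term $\pm\sigma^{(k)}u'$ is a measure, not a function (for $k=1$ the whole block is just $-\sigma'u'$). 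The repair is exactly the ``no measure times an unbounded object'' principle you state yourself: by the same order count, the only measure that can occur in the lower-order block is $\sigma^{(k)}$, and it is multiplied only by the bounded factor $u'$; hence the block is a finite measure, and one bounds $\nnorm{\sigma^{(k)}u'}_{(C^0)'}\le\nnorm{u'}_{L^\infty}\,\nseminorm{\sigma}_{\TV[k]}\le C\,\StrainBound\,\nnorm{u}_{\TV[k+1]}$. But this contribution is \emph{not} controlled ``via the first and second assertions with $p=1$''---those only see the $\Sobo[k,1]$-part of the $\TV$-norms; you must invoke the $\TV[k]$-bound on $\sigma$ directly. Rewrite that sentence to split off the $\sigma^{(k)}u'$ term.
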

The proof is deferred to \autoref{sec:NormEquivalences}.

\subsection{Smooth Optimization Problem}
\label{sec:SmoothOptimizationProblem}

For the well-definedness of the bending energy \eqref{eq:EulerBernoulliEnergy}, we require $\Curve \in \ImmC[2,2]$. Due to our convergence results in $\Sobo[2,p]$, we work within the slightly more general setting  $\Curve \in \ImmC[2,p]$, $p \geq 2$. Accordingly, for $p \in \intervalcc{2,\infty}$, we define the \emph{configuration space} $\ConfSpace \ceq \ImmC[2,p]$.
We encode the equality constraints into the smooth mapping
\begin{gather}
	\ConstraintMap \colon \ConfSpace \to \TargetSpace \ceq \AmbSpace \times \AmbSpace \times \Sphere \times \Sphere \times \Sobo[1,p][][\Interval],
	\notag
	\\
	\ConstraintMap(\Curve) = \bigparen{ 
	    	\Curve(0) - \DCond(0),
		\Curve(L) - \DCond(L),
		\Tangent_\Curve(0) - \NCond(0),
		\Tangent_\Curve(L) - \NCond(L),
		\LogStrain_\Curve
	}.
	\label{eq:DefTargetSpace}
\end{gather}
Here,  $\Sphere \subset \AmbSpace$ denotes the unit sphere and
$\DCond \colon \partial \Interval \to \AmbSpace$ and $\NCond \colon \partial \Interval \to \Sphere$ represent the prescribed boundary conditions.
We define the \emph{feasible set} $\Feasible$ by
\begin{align*}
	\Feasible \ceq \set{ \Curve \in \ConfSpace | \ConstraintMap(\Curve) = 0}
\end{align*}
and consider the following optimization problem:
\begin{problem}\label{prob:SmoothEulerBernoulli}
Minimize the functional $\EulerBernoulli$ on $\Feasible$, i.e., among all $\Curve \in \ConfSpace$ subject to
$\ConstraintMap(\Curve) = 0$.
\end{problem}

Solutions to this problem are automatically parameterized by arc length (since $\LogStrain_\Curve=0$). 
The existence of solutions $\Curve \in \ImmC[2,2]$ for commensurable constraints can be shown with the direct method of calculus of variations. 
Up to reparameterization, the solutions of \autoref{prob:SmoothEulerBernoulli} are precisely
the solutions to the classical Euler elastica problem:
\begin{problem}\label{prob:SmoothEulerBernoulli2}
Minimize the functional $\EulerBernoulli$ among all $\Curve \in \ConfSpace$ subject to
$\Curve|_{\partial \Interval} = \DCond$,
$\Tangent_\Curve|_{\partial \Interval} = \NCond$,
and $\Length(\Curve) = L$.
\end{problem}
We now turn to the regularity of solutions of these problems.

\subsection{Smooth Regularity}\label{sec:SmoothRegularity}

We show that our \apriori assumptions \eqref{eq:SmoothPriors} on minimizers of the smooth elastica problem are indeed valid. The validity of these assumptions could be verified by various means, e.g., by invoking elliptic integrals. Here we present a functional analytic approach since this approach can be mimicked in the discrete case (which appears to be new). Our proof of the validity of the \apriori assumptions hinges on what could be called ``elliptic bootstrapping''. For this purpose we require bounds on the Lagrange multipliers that arise in our constraint optimization problem. In order to be useful for our purpose, these bounds must depend only on the energy of the minimizer and the geometric constraints. Before going into detail, we briefly outline the general strategy of our approach. 

Let $\Curve$ be a critical point of \autoref{prob:SmoothEulerBernoulli}. Then the Euler-Lagrange equations read
\begin{align*}
	D\EulerBernoulli(\Curve) 
	+ D \ConstraintMap(\Curve)' \lambda = 0
	\qand
	\ConstraintMap(\Curve) = 0,
\end{align*}
where $\lambda$ is the Lagrange multiplier and $D \ConstraintMap(\Curve)'$ denotes the dual of the constraint differential. Suppose that $B_\Curve$ is a bounded right inverse of the differential $D \ConstraintMap(\Curve)$. Then $B_\Curve'$ is a left inverse of the dual  constraint differential $D \ConstraintMap(\Curve)'$. Hence, left-multiplying the previous equation by $B_\Curve' $ leads to
\begin{align*}
	B_\Curve' \, D\EulerBernoulli(\Curve) 
	+  \lambda = 0	\ .
\end{align*}
Therefore, since $B_\Curve$ is bounded by assumption, the norm of $\lambda$ is controlled by the dual norm $\norm{D\EulerBernoulli(\Curve)}_{(\Sobo[2,2][\Curve])'}$. 
A straight-forward computation (see \autoref{lem:EulerBernoulliisLipschitz} in \autoref{sec:EulerBernoulliisLipschitz}) shows that this dual norm is bounded by $C \, \EulerBernoulli(\Curve)$, where $C$ depends only on the length $L$ of the curve. 
Below we verify the assumption that there exists a right inverse $B_\Curve$ that is bounded in terms of the energy of the critical point and constants that depend only on the geometric constraints. 
This implies that also $\lambda$ can be bounded in terms of these quantities, which in turn allows for extracting quantitative \apriori information for critical points in terms of bounds  in various norms (see \autoref{lem:SmoothRegularity} below). 

For the purpose of constructing a right inverse $B_\Curve$, we define the set of sufficiently ``tame immersions''  by
\begin{align}
	\Tame^{k,p}(\StrainBound,\EnergyBound,\eta)
	\ceq 
	\set{
		\Curve \in \ConfSpace |
			\nnorm{\LogStrain_\Curve}_{\Sobo[k-1,p][\Curve]} \!\leq\! \StrainBound,
			\,
			\nnorm{\Tangent_\Curve}_{\Sobo[k-1,p][\Curve]} \!\leq\! \EnergyBound,
			\,
			\Length(\Curve) \!\geq\! (1\!+\!\eta)\,\nabs{\Curve(L) \!-\! \Curve(0)}
	}	\label{eq:ThetaConditions}
\end{align}
with $p \in \intervalcc{2,\infty}$, $k \in \N$, $k \geq 2$, $\StrainBound \geq 0$, $\EnergyBound \geq 0$, and $\eta>0$. Let the target space $\TargetSpace$  of the constraint mapping (see \eqref{eq:DefTargetSpace}) be equipped with the usual Euclidean norm for the boundary conditions and with the $\Sobo[1,p]$-norm for the logarithmic strain $\LogStrain_\Curve$. By a slight abuse of notation we simply refer to the resulting norm on the product spaces as the $\Sobo[1,p]$-norm.

\begin{blemma}[Right Inverse of $D\ConstraintMap$]\label{lem:SmoothRightinverseDPhi}
There is a constant $C > 0$ and a right inverse $B_\Curve$ of $D\ConstraintMap(\Curve)$ such that the mapping 
$\Curve \mapsto B_\Curve$ is
uniformly bounded 
as a map from the space $\Tame^{2,p}(\StrainBound,\EnergyBound,\eta)$ to the space of bounded linear mappings $L(T\TargetSpace;\SoboC[2,p])$, where the latter is equipped with the operator norm; i.e., $\nnorm{B_\Curve}_{\Sobo[1,p] \to \Sobo[2,p]} \leq C$.
\end{blemma}
\begin{proof}
As before, let $\Tangent_\Curve$ denote the unit tangent of $\Curve$. Furthermore, let $\prnor(r)$ denote the orthoprojector onto the orthogonal complement of $\TangentC(r)$. Recall the explicit form of the constraint mapping from \eqref{eq:DefTargetSpace}:
\begin{gather}
		\ConstraintMap(\Curve)
	= \bigparen{
		\Curve(0) - \DCond(0),
		\Curve(L) - \DCond(L),
		\cD_\Curve \Curve(0) - \NCond(0),
		\cD_\Curve \Curve(L) - \NCond(L),
		\log(\nabs{\Curve'})
	}.
	\notag
\end{gather}
Its differential is given by
\begin{align*}
	D\ConstraintMap(\Curve)\,u
	= \bigparen{
		u(0),
		u(L),
		\Dnor u(0),
		\Dnor u(L),
		\ninnerprod{\TangentC,\Dtot u}
	}.	
\end{align*}
Now let $\Curve \in \Tame^{2,p}(\StrainBound,\EnergyBound,\eta)$ and
$(U_0,U_1,V_0,V_1,\lambda)$ be an element of the tangent space $T_{\ConstraintMap(\Curve)} \TargetSpace$ and let $\varphi_\Curve(r) \ceq \Length(\Curve|_{\intervalcc{0,r}})/\Length(\Curve)$ be the normalized arc length parameter of $\Curve$. We construct
a right inverse $B_\Curve (U_0,U_1,V_0,V_1,\lambda) \ceq u$ as follows:
\begin{align}
	u(t)
	&\ceq 
	\textstyle
	U_0 + 
	\int_{\intervalcc{0,t}} 
		\lambda(r) \, \TangentC(r)
	\, \LineElementC(r)
	\notag\\
	&\qquad 
	\textstyle
	+\int_{\intervalcc{0,t}} 
		\prnor(r) \,  \Bigparen{ 
			(1-\varphi_\Curve(r))^2 \, V_0
			+ (1-\varphi_\Curve(r))\,\varphi_\Curve(r) \,V
			+ \varphi_\Curve(r)^2 \, V_1
		}
	\, \LineElementC(r),
	\label{eq:DefinitionofRightinverse}
\end{align}
where $V$ will be specified below. By construction, this vector field $u$ along $\Curve$ satisfies 
\begin{align*}
	u(0) = U_0, 
	\quad 
	\Dnor u(0) = V_0,
	\quad 
	\Dnor u(L) = V_1,
	\qand
	\ninnerprod{\TangentC(t) , \Dtot u(t)} = \lambda(t).
\end{align*}
It remains to choose $V$ such that $u(L) = U_1$. This requires us to solve the linear equation $\varTheta_\Curve\,V = b_\Curve$,  where
\begin{align}
	\label{eq:DefTheta}
	\varTheta_\Curve
	 &\ceq  \textstyle
	 \int_\Interval (1-\varphi_\Curve(r))\, \varphi_\Curve(r)\, \prnor(r) \, \LineElementC(r)
	 \in \Hom(\AmbSpace;\AmbSpace)
	\qand	
	\\
	\notag
	b_\Curve 
	&\ceq  \textstyle
	U_1 - U_0 - \!\int_\Interval
	\Bigparen{
		\lambda(r) \, \TangentC(r)
		+ \prnor(r) \,  \bigparen{
			(1-\varphi_\Curve(r))^2 \, V_0
			+ \varphi_\Curve(r)^2 \, V_1
		}
	}\, \LineElementC(r).
\end{align}
First, we prove that $\varTheta_\Curve$  is invertible for all $\Curve \in \Tame^{2,p}(\StrainBound,\EnergyBound,\eta)$.
\emph{Assume the contrary.} 
Then there is a unit vector $V \in \Sphere$ such that $\varTheta_\Curve\, V = 0$. 
This implies
\begin{align*}
    \textstyleswitcha
	0 = \ninnerprod{V, \varTheta_\Curve\, V}
	= \int_\Interval (1-\varphi_\Curve(r))\, \varphi_\Curve(r)\, \nabs{V - \TangentC(r) \, \ninnerprod{\TangentC(r),V}}^2 \, \LineElementC(r).
\end{align*}
Since $(1-\varphi_\Curve(r))\, \varphi_\Curve(r)$ is strictly positive almost everywhere and since $r \mapsto \nabs{V - \TangentC(r) \, \ninnerprod{\TangentC(r),V}}^2$ is continuous and nonnegative, we have
$V - \TangentC(r) \, \ninnerprod{\TangentC(r),V} = 0$ for all $r \in \Interval$. 
Hence $\TangentC$ is parallel to $V$. Continuity implies that $\Curve$ is a straight line. 
But this is ruled out by the third condition in~\eqref{eq:ThetaConditions}. Thus, $\varTheta_\Curve$ must be invertible. 
This shows that the function $\Curve \mapsto \nnorm{\varTheta_\Curve^{-1}}$ is well-defined and continuous with respect to the $\Sobo[1,\infty]$\hbox{-}topology  on $\Tame^{2,p}(\StrainBound,\EnergyBound,\eta)$. 
Thus it has a finite maximum on the compact subset 
$\set{\Curve \in \Tame^{2,p}(\StrainBound,\EnergyBound,\eta) | \Curve(0) = 0}$ with respect to the $\Sobo[1,\infty]$\hbox{-}norm. Indeed, this subset is bounded in the $\Sobo[2,p]$-norm by \autoref{lem:NormEquivalences} and compactness follows from the compact embedding $\Sobo[2,p] \hookrightarrow \Sobo[1,\infty]$.
As $\Curve \mapsto \nnorm{\varTheta_\Curve^{-1}}$ is invariant under translations of $\Curve$ in $\AmbSpace$, this proves the claim.

Thus, $\varTheta_\Curve$ is always invertible and the inverse satisfies an estimate of the form
$\nnorm{\varTheta_\Curve^{-1}} \leq C$.
Therefore, $B_\Curve$ is indeed a bounded right inverse of $D\ConstraintMap(\Curve)$.
By substituting $V = \varTheta_\Curve^{-1} \, b_\Curve$ into~\eqref{eq:DefinitionofRightinverse}
and by applying standard Hölder estimates and the Sobolev embedding $\Sobo[2,p] \hookrightarrow \Sobo[1,\infty]$, a~straight-forward calculation shows that $\nnorm{B_\Curve}_{\Sobo[1,p] \to \Sobo[2,p]}$ is uniformly bounded.
\end{proof}

We now study regularity of minimizers. In particular, the following result justifies our \apriori assumptions. Notice that this result requires that the distance between the prescribed end points $\DCond=\Curve|_{\partial \Interval}$ of the curve is strictly less than the curve length $L$. This assumption is necessary in order to rule out the situation where the distance between the end points is equal to $L$ but the first order boundary conditions $\NCond = \Tangent_\Curve|_{\partial \Interval}$ are not those of a straight line segment, in which case in which case the feasible set $\Feasible$ is empty.

\begin{btheorem}[Regularity of Minimizers] \label{lem:SmoothRegularity}
For each $L = \nabs{\Interval} >0$, $\eta \ceq L / \nabs{\DCond(L) - \DCond(0)}-1 >0$, $\EnergyBound \geq 0$, and $k\geq 1$
there is a $C\geq 0$ such that 
each critical point $\Curve \in \Feasible$ of \autoref{prob:SmoothEulerBernoulli} 
with $\EulerBernoulli(\Curve) \leq \EnergyBound$ 
is contained in $\SoboC[k,\infty]$ and
also satisfies
\begin{align*}
	\seminorm{\Curve}_{\Sobo[k,\infty][\Curve]} 
	\leq C.
\end{align*}
In particular, this means that the minimizers $\Minimizers$ of \autoref{prob:SmoothEulerBernoulli} 
satisfy the \apriori assumptions of the form \eqref{eq:SmoothPriors}, i.e., $\Minimizers \subset \Priors$.
\end{btheorem}
\begin{proof}
Using the direct method of the calculus of variations, it follows that minimizers must exist in $\SoboC[2,2]$. So let $\Curve$ be a critical point of \autoref{prob:SmoothEulerBernoulli}. 
Because $D\ConstraintMap(\Curve)$ admits a right-inverse $B_\Curve$
(see \autoref{lem:SmoothRightinverseDPhi} for $p = 2$ ), there are Lagrange multipliers $\lambda \in (\SoboC[1,2])'$, $\mu_{00}  \in \AmbSpace$, $\mu_{10} \in \AmbSpace$, 
$\mu_{01} \in T_{N(0)} \Sphere^{m-1}$, 
and
$\mu_{11} \in T_{N(L)} \Sphere^{m-1}$,
such that
\begin{align}
	D\EulerBernoulli(\Curve) \, u 
	+ \ninnerprod{\lambda, \ninnerprod{\Curve',u'}_{\AmbSpace}}
	+ \ninnerprod{\mu_{00}, u(0)}	
	+ \ninnerprod{\mu_{10}, u(L)}		
	+ \ninnerprod{\mu_{01}, u'(0)}	
	+ \ninnerprod{\mu_{11}, u'(L)}	
	= 0	
	\label{eq:KKTCondition}
\end{align}
holds for all $u \in \SoboC[2,2]$. Moreover, $B_\Curve$ and $\norm{D \EulerBernoulli(\Curve)}_{(\Sobo[2,2])'}$ are uniformly bounded for all $\Curve \in \Tame^{2,2}(0,\EnergyBound,\eta)$. Hence, the norms of these Lagrange multipliers are bounded by a constant $C$ that depends only on the geometric quantities $L$, $\eta$ and on the energy bound $\EnergyBound$. 

Writing $\Curve(t) = \Curve(0) + \int_{0}^t \TangentC(r)  \, \dd r$ 
(notice that $\Curve$ is parameterized by arc length) and testing only against
$u$ of the form
$
	u(t) = \int_{0}^t v(r)  \, \dd r
$
with $v(r) \perp \TangentC(r)$, $v(0) =0$, and $v(L) = 0$ leads to
\begin{align}
	\textstyle
	\int_\varSigma \ninnerprod{\TangentC', v'}_\AmbSpace \, \dd t
	+ \int_\varSigma \ninnerprod{\mu_{10}, v}_\AmbSpace \,\dd t
	= 0.
	\label{eq:IndicatrixBootstrappingWeakEquation}
\end{align}
On the one hand, we may integrate by parts and obtain
\begin{align}\label{eq:SmoothRegularity1}
	\prnor \TangentC'' = \prnor \, \mu_{10},
\end{align} 
where $\prnor(r)$ denotes the projection onto the orthogonal complement of $\Tangent_\Curve(r)$.
On the other hand, we have
$
	\ninnerprod{\TangentC,\TangentC''}
	= \tfrac{\dd}{\dd t} \ninnerprod{\TangentC,\TangentC'}
	 - \ninnerprod{\TangentC',\TangentC'}
	= - \ninnerprod{\TangentC',\TangentC'}
$.
Combining these two equations, we obtain the following second order ODE for $\TangentC$
\begin{align}
	\TangentC'' 
	= \prnor \, \TangentC'' + \TangentC \, \ninnerprod{\TangentC,\TangentC''}	
	= \prnor \, \mu_{10}  - \TangentC \, \ninnerprod{\TangentC',\TangentC'}
	=\mu_{10} - \TangentC \, \ninnerprod{\TangentC,\mu_{10}} - \TangentC \, \ninnerprod{\TangentC',\TangentC'}.
	\label{eq:IndicatrixBootstrappingEquation}
\end{align}
Notice that the right hand side is a member of $L^1(\Interval;\AmbSpace)$. ``Elliptic bootstrapping''\footnote{We point out that elliptic bootstrapping in Sobolev spaces $\SoboC[k,p]$ in dimension $\dim(\varSigma) = 1$ is -- contrary to higher dimensional domains -- indeed possible for all $p\in [1,\infty]$.}
yields $\TangentC \in \SoboC[2,1] \subset \SoboC[1,\infty]$ and
$\nseminorm{\Curve''}_{L^\infty} = \nseminorm{\TangentC}_{\Sobo[1,\infty]} \leq C$ for some $C$ that again depends only on the boundary conditions and the energy bound.
Now, the right hand side of \eqref{eq:IndicatrixBootstrappingEquation} and thus $\TangentC''$ lies in $L^\infty$, hence $\TangentC \in \SoboC[2,\infty]$.
Thus, the right hand side of \eqref{eq:IndicatrixBootstrappingEquation} lies in $\SoboC[1,\infty]$ so that a further bootstrapping step leads to $\TangentC \in \SoboC[3,\infty]$
and $\nseminorm{\Curve}_{\Sobo[4,\infty]} = \nseminorm{\TangentC}_{\Sobo[3,\infty]} \leq C$ for some constant $C$, and so forth.
\end{proof}

\begin{remark}
One can show in the same way that
the maximal regularity 
for a clamped elastic curve $\Curve$ subject to point loads is $\Curve \in \BVC[4] \subset \SoboC[3,\infty]$. So the \apriori assumptions~$\Priors$ from \eqref{eq:SmoothPriors} are still valid. Indeed, also \autoref{theo:ConvergenceTheorem} holds true for more general optimization problems involving objective functions of the form $\cF = \EulerBernoulli + \cG$ with a sufficiently well-behaved energy $\cG$ of ``lower order''. For the sake of brevity, we focus here only on the classical elastica problem.
\end{remark}

\clearpage

\section{Discrete Setting}
\label{sec:DiscreteSetting}

In our exposition of the discrete setting, we aim at mimicking the smooth setting as closely as possible. We first introduce some basic notation that we require throughout.

\paragraph*{Notation} Let $\Triangulation$ be a finite partition of $\Interval=[0,L]$, i.e., a finite decomposition into compact intervals. We denote by $\Vertices(\Triangulation)\subset\varSigma$ the set of vertices and by $\IVertices(\Triangulation)\ceq V(\Triangulation)\setminus \set{0,L}$ the set of interior vertices. Moreover, we denote by $\Edges(\Triangulation)$
the set of edges, by $\BEdges(\Triangulation)$ the set of those edges that contain a boundary vertex ($0$ or $L$), and by $\IEdges(\Triangulation)$ the set of edges that do not touch the boundary of $\Interval$.

For a vertex $\Vertex \in \IVertices(\Triangulation)$ and an edge $\Edge \in \Edges(\Triangulation)$, we introduce the following shift notation:
\begin{itemize}
	\item $\VprevE{\Vertex}$ and $\VnextE{\Vertex} \in \Edges(\Triangulation)$ 
	for the preceding and following edge of vertex $\Vertex$ and
	\item $\EprevV{\Edge}$ and $\EnextV{\Edge} \in \Vertices(\Triangulation)$ for the left and right boundary vertices of edge $\Edge$.
\end{itemize}
With reference to \autoref{fig:Notation_Discrete}, we extend this notation transitively, i.e., $\VprevV{\Vertex}$ and $\VnextV{\Vertex}$ stands for the vertices before and after vertex $\Vertex$ and $\EprevE{\Edge}$ and $\EnextE{\Edge}$ stands for the preceding and following edges of edge $\Edge$, respectively. We also apply this shift notation to functions
$\varphi \colon \Vertices(\Triangulation) \to \R^m$ and
$\psi\colon \Edges(\Triangulation) \to \R^m$ via pullback, i.e., we
define
$\EprevV{\varphi}(\Edge) \ceq \varphi(\EprevV{\Edge})$,
$\VnextV{\varphi}(\Vertex) \ceq \varphi(\VnextV{\Vertex})$,
$\VnextE{\psi}(\Vertex) \ceq \psi(\VnextE{\Vertex})$,
$\EprevE{\psi}(\Edge) \ceq \psi(\EprevE{\Edge})$, etc.

\begin{figure}
\capstart
\begin{center}
	\def\svgwidth{0.7 \textwidth}
	\input{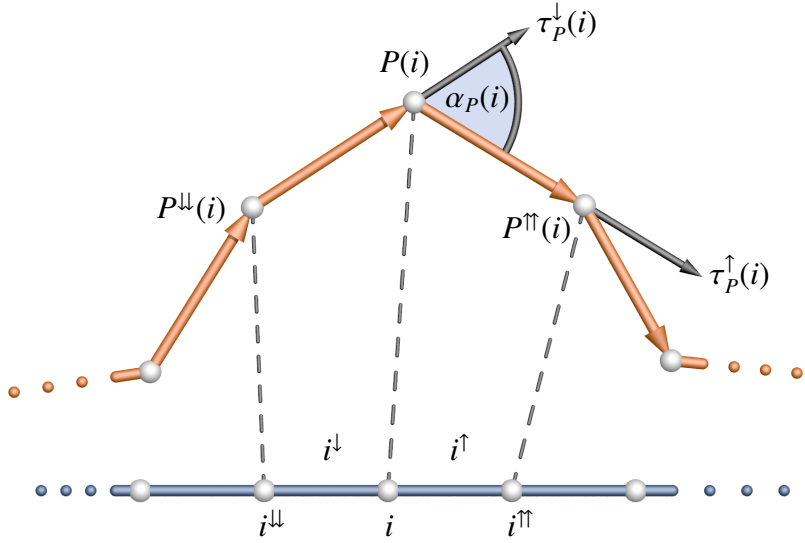}
\end{center}
\caption{Sketch of the notation used for polygonal lines.}
\label{fig:Notation_Discrete}
\end{figure}%

We define the \emph{edge midpoint} $\EdgeMidpoints(\Edge) $ of $\Edge \in \Edges(\Triangulation)$ and the \emph{dual edge} $\DualEdge(\Vertex)$ of $\Vertex \in \IVertices(\Triangulation)$ as follows:
\begin{align*}
	\EdgeMidpoints(\Edge) \ceq \tfrac{1}{2}(\EprevV{\Edge}+\EnextV{\Edge})
	\qand
	\DualEdge(\Vertex) \ceq 
	\nintervalcc{\VprevE{\EdgeMidpoints}(\Vertex),\VnextE{\EdgeMidpoints}(\Vertex)}
	.
\end{align*}
The \emph{reference edge lengths} $\ReferenceEdgeLengths \colon \Edges(\Triangulation) \to \intervalco{0,\infty}$
of the partition $\Triangulation$ and its \emph{dual reference edge lengths} 
$\DualReferenceEdgeLengths \colon \IVertices(\Triangulation) \to \intervalco{0,\infty}$
are defined by
\begin{align*}
	\ReferenceEdgeLengths(\Edge) \ceq \nabs{\Edge} = \shiftr{\Edge} - \shiftl{\Edge}
	\qand
	\DualReferenceEdgeLengths(\Vertex) \ceq 
	\nabs{\DualEdge(\Vertex)} = 
	\tfrac{1}{2} \nparen{
		 \shiftl{\ReferenceEdgeLengths}(\Vertex) 
		 + 
		 \shiftr{\ReferenceEdgeLengths}(\Vertex)
		 }.
\end{align*}
We measure the \emph{resolution} of the partition $\Triangulation$ by 
\begin{align*}
	\MaxRadius(\Triangulation) \ceq \sup_{\Edge \in \Edges(\Triangulation)}\ReferenceEdgeLengths(\Edge).
\end{align*}
Each polygon $\Polygon \colon \Vertices(\Triangulation) \to \AmbSpace$ induces its own \emph{edge lengths} $\EdgeLengths_\Polygon \colon \Edges(\Triangulation) \to \intervalco{0,\infty}$
and its own
\emph{dual edge lengths} $\DualEdgeLengths_\Polygon \colon \IVertices(\Triangulation)  \to \intervalco{0,\infty}$
by
\begin{align*}
	\EdgeLengths_\Polygon(\Edge) \ceq \nabs{\Polygon(\EnextV{\Edge}) - \Polygon(\EprevV{\Edge}) }
	\qand
	\DualEdgeLengths_\Polygon(\Vertex) \ceq 
	\tfrac{1}{2} \bigparen{ 
			\shiftl{\EdgeLengths_\Polygon}(\Vertex) + \shiftr{\EdgeLengths_\Polygon}(\Vertex) 
	}.
\end{align*}

\paragraph*{Discrete differential operators and configuration spaces}
Analogously to the smooth setting, we define the configuration space $\DiscConfSpace$ of \emph{discrete immersions} by
\begin{align*}
	\DiscConfSpace & \ceq \set{\Polygon \colon V(\Triangulation) \to \AmbSpace | \forall \Edge \in \Edges(\Triangulation) \colon \; \nabs{\LogStrain_\Polygon(\Edge)}  < \infty},
\end{align*}
where we denote the \emph{discrete logarithmic strain} per edge by
\begin{align*}
	\LogStrain_\Polygon(\Edge) \ceq \log ( \EdgeLengths_\Polygon(\Edge)/\ReferenceEdgeLengths(\Edge)).	
\end{align*}
For functions
$\varphi \colon \Vertices(\Triangulation) \to \R^m$ 
and
$\psi\colon \Edges(\Triangulation) \to \R^m$, we introduce the difference operators
\begin{equation*}
	\begin{aligned}
		\cD_0 \, \varphi (\Edge)\ceq \tfrac{\EnextV{\varphi}(\Edge)-\EprevV{\varphi}(\Edge)}{\ReferenceEdgeLengths(\Edge)}
	\\
		\cD_0\, \psi(\Vertex) \ceq \tfrac{\VnextE{\psi}(\Vertex) -\VprevE{\psi}(\Vertex) }{\DualReferenceEdgeLengths(\Vertex) }
	\end{aligned}
	\qquad\text{and}\qquad
	\begin{aligned}
		\DiscD \, \varphi (\Edge)\ceq \tfrac{\EnextV{\varphi}(\Edge)-\EprevV{\varphi}(\Edge)}{\EdgeLengths_\Polygon(\Edge)}\\
		\DiscD \, \psi(\Vertex) \ceq \tfrac{\VnextE{\psi}(\Vertex) -\VprevE{\psi}(\Vertex) }{\DualEdgeLengths_\Polygon(\Vertex) }
	\end{aligned}.
\end{equation*}
We drop the dependence on $\Vertex$ and $\Edge$ in the sequel. The \emph{discrete Laplace-Beltrami operators} are defined as
\begin{align*}
	\textstyle
	\DiscLaplacian \varphi
	= \DiscD \, \DiscD \, \varphi = \frac{1}{\DualEdgeLengths_\Polygon} \Bigparen{
		\frac{\VnextV{\varphi} - \varphi}{\VnextE{\EdgeLengths_\Polygon}} 
		- 
		\frac{\varphi - \VprevV{\varphi}}{\VprevE{\EdgeLengths_\Polygon}}
	}
	\qand
	\DiscLaplacian \psi
	= \DiscD \, \DiscD \, \psi = 
	\frac{1}{\EdgeLengths_\Polygon} \Bigparen{
		\frac{\EnextE{\psi} - \psi}{\EnextV{\DualEdgeLengths_\Polygon}} 
		- 
		\frac{\psi - \EprevE{\psi}}{\EprevV{\DualEdgeLengths_\Polygon}}
	}.
\end{align*}
For $\Polygon \in \DiscConfSpace$, the \emph{unit edge vectors} are given by
\begin{align*}
	\EdgeVectors_\Polygon \colon \Edges(\Triangulation) \to \mathbb{S}, 
	\quad \EdgeVectors_\Polygon(\Edge) 
	= \DiscD(\Polygon)(\Edge).
\end{align*}
The \emph{discrete curvature vectors} $\Curvature_\Polygon \colon \IVertices(\Triangulation) \to \AmbSpace$ can be written as
\begin{align*}
	\Curvature_\Polygon(\Vertex) 
	= \DiscD \Tangent_\Polygon
	= 	\DiscLaplacian \Polygon(\Vertex).
\end{align*}

For each $\Vertex \in \Vertices(\Triangulation)$ let $\lambda_\Vertex \in \Sobo[1,\infty](\varSigma;\AmbSpace)$ be the unique piecewise linear and continuous function satisfying $\lambda_\Vertex(\Vertex)=1$ and $\lambda_\Vertex(\OtherVertex) = 0$ for all $\OtherVertex \in \Vertices(\Triangulation) \setminus \{\Vertex\}$. We define
the interpolation operator
\begin{align}
	\textstyle
	\DiscTestMap \colon \Map(\Vertices(\Triangulation);\R^{m}) \to \Sobo[1,\infty](\varSigma;\AmbSpace),
	\quad
	\Polygon \mapsto \sum_{\Vertex \in \Vertices(\Triangulation)} \lambda_\Vertex \, \Polygon(\Vertex).
	\label{eq:piecewiselinear}
\end{align}
Notice that for each $\Polygon \in \Map(\Vertices(\Triangulation);\R^{m})$, the image of $\DiscTestMap(\Polygon)$ is a polygonal line interpolating the points $\Polygon(\Vertices(\Triangulation))$.

\subsection{Discrete Sobolev and TV Norms}\label{sec:DiscreteNorms}
For $p \in \intervalco{1,\infty}$ (with the typical extensions for $p = \infty$), we denote the discrete $\ell^p$-norms by
\begin{align*}
	\nnorm{\varphi}_{\ell^p}
	&= \bigparen{\textstyle \sum_{\Vertex \in \Vertices(\Triangulation)} \nabs{\varphi(\Vertex)}^p \, \DualReferenceEdgeLengths(\Vertex) }^\frac{1}{p}, \\	
	\nnorm{\psi}_{\ell^p}
	&= \bigparen{\textstyle \sum_{\Edge \in \Edges(\Triangulation)} \nabs{\psi(\Edge)}^p \, \ReferenceEdgeLengths(\Edge) }^\frac{1}{p}.
\end{align*}
As in the smooth setting, we additionally consider slightly different norms that take into account the line element of a discrete curve:
\begin{align*}
	\nnorm{\varphi}_{\ell^p_\Polygon}
	&= \bigparen{\textstyle \sum_{\Vertex \in \Vertices(\Triangulation)} \nabs{\varphi(\Vertex)}^p \, \DualEdgeLengths_\Polygon(\Vertex) }^\frac{1}{p}, \\	
	\nnorm{\psi}_{\ell^p_\Polygon}
	&= \bigparen{\textstyle \sum_{\Edge \in \Edges(\Triangulation)} \nabs{\psi(\Edge)}^p \, \EdgeLengths_\Polygon(\Edge) }^\frac{1}{p}.
\end{align*}
Likewise, we define \emph{discrete Sobolev seminorms} by
\begin{equation*}
	\begin{aligned}
	\nseminorm{\varphi}_{\sobo[k,p]} 
	&= \nnorm{\cD_0^k \varphi}_{\ell^p}\\
	\nseminorm{\psi}_{\sobo[k,p]} 
	&= \nnorm{\cD_0^k\psi}_{\ell^p}
	\end{aligned}
	\qquad\text{and}\qquad
	\begin{aligned}
	\nseminorm{\varphi}_{\sobo[k,p][\Polygon]}
	&= \nnorm{\DiscD \varphi}_{\ell^p_\Polygon} \\
	\nseminorm{\psi}_{\sobo[k,p][\Polygon]}
	&= \nnorm{\DiscD \psi}_{\ell^p_\Polygon}
	\end{aligned}
	\ .
\end{equation*}
Finally, the analogues of the smooth total variation norms $\nseminorm{\cdot}_{\TV[k]}$ and $\nseminorm{\cdot}_{\TV[k][\Curve]}$ are given by
\begin{align*}
\nseminorm{\cdot}_{\tv[k]}:=\nseminorm{\cdot}_{\sobo[k,1]} \qquad\text{and}\qquad
\nseminorm{\cdot}_{\tv[k][\Polygon]}:=\nseminorm{\cdot}_{\sobo[k,1][\Polygon]},
\end{align*}
which we refer to as \emph{discrete TV seminorms}.

\subsection{Discrete Optimization Problem}

In analogy to the constraints in the smooth setting, we define
\begin{gather}
	\DiscConstraintMap \colon \DiscConfSpace 
	\to
	\DiscTargetSpace \ceq
	\AmbSpace \times \AmbSpace
	\times 
	\Sphere \times \Sphere
	\times
	\Map(\Edges(\Triangulation);\R),
	\notag
	\\
	\DiscConstraintMap(\Polygon)
	\ceq ( 
		\Polygon(0) - \DCond(0),
		\Polygon(L) - \DCond(L),
		\EdgeVectors_\Polygon(\shiftr{0}) - \NCond(0),
		\EdgeVectors_\Polygon(\shiftl{L}) - \NCond(L),
		\LogStrain_\Polygon
		)\label{eq:DefDiscTargetSpace}
\end{gather}
for prescribed $\DCond \colon \partial \Interval \to \AmbSpace$ and
$\NCond \colon \partial \Interval \to \Sphere$.
We denote the \emph{discrete feasible set} by
\begin{align*}
	\DiscFeasible \ceq \set{\Polygon \in \DiscConfSpace | \DiscConstraintMap(\Polygon) = 0}.
\end{align*}
With the \emph{turning angles}
\begin{align*}
	\TurningAngles_\Polygon \colon \IVertices(\Triangulation) \to \intervalcc	{0, \uppi},
	\quad
	\TurningAngles_\Polygon(\Vertex) \ceq \bigAngle{\VprevE{\EdgeVectors_\Polygon}(\Vertex)}{ \VnextE{\EdgeVectors_\Polygon}(\Vertex)},
\end{align*}
we define the \emph{discrete Euler-Bernoulli energy}
\begin{align*}
	\DiscEulerBernoulli\colon \DiscConfSpace \to \R,
	\quad
	\DiscEulerBernoulli(\Polygon) 
	\ceq 
	\textstyle	
	\frac{1}{2}
	\nnorm{\TurningAngles_\Polygon/\DualEdgeLengths_\Polygon}_{\ell^2_\Polygon}^2
	=
	\frac{1}{2}
	\sum_{\Vertex \in \IVertices(\Triangulation)} 
\frac{\TurningAngles^2_\Polygon(\Vertex)}{\DualEdgeLengths_\Polygon(\Vertex)}.
\end{align*}

We consider the following discrete version of \autoref{prob:SmoothEulerBernoulli}:
\begin{problem}\label{prob:DiscreteEulerBernoullis}
For boundary conditions $\DCond \colon \partial \Interval \to \AmbSpace$ and $\NCond \colon  \partial \Interval \to \Sphere$,
minimize the function $\DiscEulerBernoulli$ on $\DiscFeasible$, i.e., among all $\Polygon \in \DiscConfSpace$ subject to
$\DiscConstraintMap(\Polygon) = 0$.
\end{problem}

\subsection{Discrete Regularity}
\label{sec:DiscreteRegularity}

Mimicking the smooth setting, we show that our \emph{discrete} \apriori assumptions \eqref{eq:DiscretePriors} on minimizers of the discrete elastica problem are indeed valid. As in the smooth case, our proof of the validity of the \apriori assumptions hinges on ``discrete elliptic bootstrapping''. Again, we require bounds on the Lagrange multipliers that arise in our constraint optimization problem. In order to be useful for our purpose, these bounds must only depend on the energy of the minimizer and the geometric constraints. Therefore, we first prove that the differential of the constraint mapping $\DiscConstraintMap$ has a right inverse that is uniformly bounded. The results in this section and the respective proofs are similar to the smooth setting above. 

Define the following set of discrete ``tame immersions'':
\begin{align}
	\DiscTame^{k,p}(\StrainBound,\EnergyBound,\eta)
	\ceq 
	\set{
		\Polygon\in \DiscConfSpace |
			\nnorm{ \LogStrain_\Polygon }_{\sobo[k-1,p][\Polygon]} \leq \StrainBound,
			\,
			\nnorm{\Tangent_\Polygon}_{\sobo[k-1,p][\Polygon]} \leq \EnergyBound,
			\,			
			\Length(\Polygon) \geq (1+\eta)
			\,
			\nabs{\textstyle \Polygon(L) - \Polygon(0) }
	}	\label{eq:DiscreteThetaConditions}
\end{align}	
with $p \in \intervalcc{2,\infty}$, $k \in \N$, $k \geq 2$ and where $\StrainBound$ and $\EnergyBound$ are nonnegative, and $\eta>0$. Let the target space $\DiscTargetSpace$  of the constraint mapping (see \eqref{eq:DefDiscTargetSpace}) be equipped with the usual Euclidean norm for the boundary conditions and with the $\sobo[1,p]$-norm for the logarithmic strain $\LogStrain_\Polygon$. By a slight abuse of notation we simply denote the resulting product space norm as the $\sobo[1,p]$-norm.

\begin{lemma}[Right Inverse of $D\DiscConstraintMap$]\label{lem:DiscreteRightinverseDPhi}
There are constants $\MaxRadius_0> 0$, $C > 0$, and a right inverse $B_{\Polygon}$ of $D\DiscConstraintMap$ such that for all partitions $\Triangulation$ with $\MaxRadius(\Triangulation)\leq \MaxRadius_0$ the mapping
$\Polygon \mapsto B_{\Polygon}$ is 
uniformly bounded 
as a map from $\DiscTame^{2,p}(\StrainBound,\EnergyBound,\eta)$ to the space
of bounded linear mappings $L(\DiscTargetSpace, \Map(\Vertices(\Triangulation); \AmbSpace))$ equipped with the operator norm; i.e., $\nnorm{B_{\Polygon}}_{\sobo[1,p] \to \sobo[2,p]} \leq C$. 
\end{lemma}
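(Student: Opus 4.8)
The plan is to transcribe the proof of \autoref{lem:SmoothRightinverseDPhi} into the discrete setting, replacing integrals over $\Interval$ by sums over edges, the orthoprojector $\prnor$ by $\Discprnor$, and the normalized arc length $\varphi_\Curve$ by a discrete counterpart evaluated at edge midpoints. Since $\DiscConfSpace$ is open in $\Map(\Vertices(\Triangulation);\AmbSpace)$, its tangent space at $\Polygon$ is $\Map(\Vertices(\Triangulation);\AmbSpace)$, and differentiating \eqref{eq:DefDiscTargetSpace} with the elementary identities $D\Tangent_\Polygon(\Edge)\,u = \Discprnor(\Edge)\,\DiscD u(\Edge)$ and $D\LogStrain_\Polygon(\Edge)\,u = \ninnerprod{\Tangent_\Polygon(\Edge),\DiscD u(\Edge)}$ gives
\begin{align*}
	D\DiscConstraintMap(\Polygon)\,u
	= \bigparen{
		u(0),\; u(L),\;
		\Discprnor(\shiftr{0})\DiscD u(\shiftr{0}),\;
		\Discprnor(\shiftl{L})\DiscD u(\shiftl{L}),\;
		\ninnerprod{\Tangent_\Polygon,\DiscD u}
	},
\end{align*}
the literal analogue of the smooth formula (the third and fourth components automatically land in $\Tangent_\Polygon(\shiftr{0})^\perp$, resp.\ $\Tangent_\Polygon(\shiftl{L})^\perp$).

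For the right inverse I would fix $\psi\colon\Edges(\Triangulation)\to\intervalcc{0,1}$ with $\psi(\shiftr{0})=0$, $\psi(\shiftl{L})=1$ and $0<\psi<1$ on $\IEdges(\Triangulation)$ --- for instance the cumulative polygon length up to an edge's midpoint, rescaled to $\intervalcc{0,1}$. Given $(U_0,U_1,V_0,V_1,\lambda)$ with $V_0\perp\Tangent_\Polygon(\shiftr{0})$, $V_1\perp\Tangent_\Polygon(\shiftl{L})$, set
\begin{align*}
	w(\Edge)\ceq\EdgeLengths_\Polygon(\Edge)\Bigparen{
		\lambda(\Edge)\,\Tangent_\Polygon(\Edge)
		+\Discprnor(\Edge)\big[(1-\psi(\Edge))^2 V_0+(1-\psi(\Edge))\psi(\Edge)\,V+\psi(\Edge)^2 V_1\big]
	}
\end{align*}
and define $u=B_\Polygon(U_0,U_1,V_0,V_1,\lambda)$ by $u(0)\ceq U_0$ and $u(\VnextV{\Vertex})\ceq u(\Vertex)+w(\VnextE{\Vertex})$. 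Then $\DiscD u(\Edge)=w(\Edge)/\EdgeLengths_\Polygon(\Edge)$, so $u(0)=U_0$, $\Discprnor(\shiftr{0})\DiscD u(\shiftr{0})=V_0$, $\Discprnor(\shiftl{L})\DiscD u(\shiftl{L})=V_1$ and $\ninnerprod{\Tangent_\Polygon,\DiscD u}=\lambda$ hold \emph{exactly}, while the remaining requirement $u(L)=U_1$ is the linear equation $\varTheta_\Polygon V=b_\Polygon$ with
\begin{align*}
	\varTheta_\Polygon\ceq\sum_{\Edge\in\IEdges(\Triangulation)}(1-\psi(\Edge))\,\psi(\Edge)\,\Discprnor(\Edge)\,\EdgeLengths_\Polygon(\Edge)\in\Hom(\AmbSpace;\AmbSpace)
\end{align*}
and $b_\Polygon\ceq U_1-U_0-\sum_{\Edge}\EdgeLengths_\Polygon(\Edge)\bigparen{\lambda(\Edge)\Tangent_\Polygon(\Edge)+\Discprnor(\Edge)\big[(1-\psi(\Edge))^2 V_0+\psi(\Edge)^2 V_1\big]}$ (the two boundary edges contribute pure $V_0$, resp.\ $V_1$, to $b_\Polygon$ since $(1-\psi)\psi$ vanishes there).

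Invertibility of $\varTheta_\Polygon$ on $\DiscTame^{2,p}(\StrainBound,\EnergyBound,\eta)$, valid once $\MaxRadius(\Triangulation)\leq\MaxRadius_0$, follows as in the smooth case: $\varTheta_\Polygon V=0$, $\nabs{V}=1$, forces $0=\ninnerprod{V,\varTheta_\Polygon V}=\sum_{\Edge\in\IEdges}(1-\psi)\psi\,\nabs{\Discprnor(\Edge)V}^2\EdgeLengths_\Polygon(\Edge)$, hence $\Tangent_\Polygon(\Edge)=\pm V$ on every interior edge. The one genuinely new point is that, unlike the continuous setting, this does \emph{not} immediately make $\Polygon$ a straight line, because consecutive edges could flip --- and this is where $\MaxRadius_0$ is needed. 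The bound $\nnorm{\Tangent_\Polygon}_{\sobo[1,p][\Polygon]}\leq\EnergyBound$ together with the edge-length control from $\nnorm{\LogStrain_\Polygon}_{\sobo[1,p][\Polygon]}\leq\StrainBound$ forces, for $\MaxRadius_0$ small, every turning angle to stay bounded away from $\uppi$ (a flip would contribute a term of order $\DualEdgeLengths_\Polygon(\Vertex)^{1-p}$ to $\nnorm{\DiscD\Tangent_\Polygon}_{\ell^p_\Polygon}^p$, diverging as $\MaxRadius_0\to0$ for $p>1$). Hence no flip occurs, all interior edges equal $V$, the boundary edges deviate from $V$ by at most their small adjacent turning angles, so $\Length(\Polygon)-\nabs{\Polygon(L)-\Polygon(0)}=\LandO(\MaxRadius^{3})$; with $\Length(\Polygon)\geq(1+\eta)\nabs{\Polygon(L)-\Polygon(0)}$, $\eta>0$, and the lower bound on $\Length(\Polygon)$ this is impossible, so $\varTheta_\Polygon$ is invertible and $V=\varTheta_\Polygon^{-1}b_\Polygon$.

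The main obstacle is the \emph{uniform} estimate $\nnorm{\varTheta_\Polygon^{-1}}\leq C$ with $C$ independent of $\Triangulation$: the smooth proof obtained this from compactness of the tame set in $\Sobo[1,\infty]$, which cannot be quoted here because the discrete configuration spaces change with the partition. I would get it either (a) quantitatively in the spirit of the previous step --- if $\ninnerprod{V,\varTheta_\Polygon V}$ were too small, then $\Tangent_\Polygon$ is $\approx\pm V$ on edges of total length close to $\Length(\Polygon)$, the turning-angle bound again rules out sign changes, so $\nabs{\Polygon(L)-\Polygon(0)}$ is close to $\Length(\Polygon)$, contradicting the $\eta$-condition quantitatively --- or (b) by comparing $\varTheta_\Polygon$ with the smooth operator $\varTheta_{\DiscTestMap(\Polygon)}$ of the piecewise-affine interpolant, using that the interpolants of tame polygons with $\MaxRadius\leq\MaxRadius_0$ form a relatively compact family of ``almost tame'' $\Sobo[1,\infty]$-curves on which the smooth estimate applies and that $\nnorm{\varTheta_\Polygon-\varTheta_{\DiscTestMap(\Polygon)}}=\LandO(\MaxRadius)$. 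Once $\nnorm{\varTheta_\Polygon^{-1}}\leq C$ is available, substituting $V=\varTheta_\Polygon^{-1}b_\Polygon$ into the formula for $\DiscD u$ and estimating $u$, $\DiscD u$, and $\DiscLaplacian u=\DiscD\DiscD u$ by discrete Hölder inequalities, the discrete embedding $\sobo[2,p]\hookrightarrow\sobo[1,\infty]$, and a discrete counterpart of \autoref{lem:NormEquivalences} yields $\nnorm{B_\Polygon}_{\sobo[1,p]\to\sobo[2,p]}\leq C$, completing the proof exactly as in the smooth case.
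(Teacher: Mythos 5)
Your construction of $B_\Polygon$ --- the discrete arc-length-type weight, the edgewise ansatz for $\DiscD u$, and the reduction to the linear system $\varTheta_\Polygon V = b_\Polygon$ --- is exactly the paper's (the paper writes $\varphi_\Polygon$ for your $\psi$ and sums over all edges, which agrees with your sum over $\IEdges(\Triangulation)$ because the weight $(1-\varphi_\Polygon)\varphi_\Polygon$ vanishes on the two boundary edges), and, like the paper, you may legitimately leave the final elementary estimate of $\nnorm{B_\Polygon}_{\sobo[1,p]\to\sobo[2,p]}$ to a routine computation once $\nnorm{\varTheta_\Polygon^{-1}}\leq C$ is available. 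The paper likewise isolates the uniform invertibility of $\varTheta_\Polygon$ as the genuinely new point and proves it separately in \autoref{lem:DiscreteBoundonTheta}.

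The gap is precisely there. Your route (a) is the right idea (it is in spirit the paper's), but the step ``the turning-angle bound again rules out sign changes'' does not work as stated: the partition carries no lower bound on edge lengths, so the per-vertex bound $\TurningAngles_\Polygon(\Vertex)\leq C\,\MaxRadius(\Triangulation)^{1-1/p}$ gives no control on the \emph{total} turning accumulated over a stretch of small length, and a priori the tangent could rotate from $+V$ to $-V$ across a short exceptional set while contributing little to $\ninnerprod{V,\varTheta_\Polygon V}$. What actually closes the argument is the full $\ell^p$ budget: by H\"older, the total turning over a stretch of polygon length $s$ is at most of order $\EnergyBound\, s^{1-1/p}$, so a reversal of direction costs a length bounded below by a constant depending only on $\EnergyBound$, $\StrainBound$, $p$. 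The paper packages this quantitatively as Claim~1 of \autoref{lem:DiscreteBoundonTheta} (for every unit vector $v$ there is a ``middle'' edge with $\nabs{\ninnerprod{\EdgeVectors_\Polygon,v}}\leq 2/(2+\eta)$, obtained from the $\eta$-condition applied to the subpolygon over $\Edges_\varepsilon$, where the weight $(1-\varphi_\Polygon)\varphi_\Polygon$ is bounded below) together with the discrete H\"older-$(1-\tfrac1p)$ continuity of $\Edge\mapsto 1-\ninnerprod{\EdgeVectors_\Polygon(\Edge),v}^2$; these two facts yield the coercivity $\ninnerprod{V,\varTheta_\Polygon V}\geq C^{-1}\nabs{V}^2$ directly, with constants depending only on $L$, $\StrainBound$, $\EnergyBound$, $\eta$, and no compactness or contradiction step is needed. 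Your alternative (b) is not the paper's route and has its own defect as sketched: the interpolants $\DiscTestMap(\Polygon)$ are merely Lipschitz, hence not in $\Tame^{2,p}(\StrainBound,\EnergyBound,\eta)$, so \autoref{lem:SmoothRightinverseDPhi} cannot be invoked for them without an extra smoothing step. In short: construction and fixed-$\Polygon$ invertibility are fine; to complete the proof you must replace the loose sign-change sentence by the chained H\"older estimate on the middle region (or cite \autoref{lem:DiscreteBoundonTheta}).
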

\begin{proof}
The proof follows the proof of \autoref{lem:SmoothRightinverseDPhi}. Let $\Discprnor(\Edge)$ denote the projection onto the orthogonal complement of the unit edge vector $\Tangent_\Polygon(\Edge)$.
For a tangent vector $u \in T_\Polygon\DiscConfSpace$, the derivative of $\DiscConstraintMap$ is given by
\begin{align*}
	D\DiscConstraintMap(\Polygon)\,u
	= \textstyle
	\bigparen{
		u(0),
		u(L),
		\DiscDnor u(\shiftr{0}),
		\DiscDnor u(\shiftl{L}),
		\ninnerprod{\EdgeVectors_\Polygon, \DiscD u}
	}.	
\end{align*}
Let
$\Polygon \in \DiscTame^{2,p}(\StrainBound, \EnergyBound,\eta)$ and
$
	(U_0,U_1,V_0,V_1,\lambda) \in T_{\DiscConstraintMap(\Polygon)} \TargetSpace
$
be given. 
Analogously as in \autoref{lem:SmoothRightinverseDPhi}, 
we define the \emph{normalized} distance to $\shiftr{0}$ as
\begin{align*}
	\varphi_\Polygon \colon \Edges(\Triangulation) \to \intervalcc{0,1},
	\quad 
	\varphi_\Polygon(\Edge)
	\ceq 
	\tfrac{
		s_\Polygon(\Edge) - s_\Polygon(\shiftr{0})
	}{
		s_\Polygon(\shiftl{L})-s_\Polygon(\shiftr{0})
	},
\end{align*}
where $s_\Polygon(\Edge) \ceq 
\EdgeMidpoints(\Edge) - \EdgeMidpoints(\shiftr{0})
= \sum_{\Vertex \in \IVertices(\Triangulation), \Vertex< \EdgeMidpoints(\Edge)} \DualEdgeLengths_\Polygon(\Vertex)$.
The function
$
u \colon \Vertices(\Triangulation) \to \AmbSpace
$
defined by
\begin{align}
	u(\Vertex) &= 
		\textstyle
	U_0 + \sum_{\substack{\Edge \in \Edges(\Triangulation)\\\EdgeMidpoints(\Edge) \leq \Vertex}}\;
		\lambda(\Edge) \, \EdgeVectors_\Polygon(\Edge) \, \EdgeLengths_\Polygon(\Edge)
	\label{eq:DiscreteRightInverse}		
	\\
	&\qquad
	\textstyle
	+ \sum_{\substack{\Edge \in \Edges(\Triangulation)\\\EdgeMidpoints(\Edge) \leq \Vertex}}\;
		\Discprnor(\Edge) \, \bigparen{
			(1-\varphi_\Polygon(\Edge))^2 \, V_0
			+ 
			(1-\varphi_\Polygon(\Edge)) \, \varphi_\Polygon(\Edge) \, V
		 	+ 
		 	\varphi_\Polygon(\Edge)^2 \, V_1
		} \,\EdgeLengths_\Polygon (\Edge)
	\notag
\end{align}
constitutes a right inverse $B_{\Polygon}(U_0,U_1,V_0,V_1,\lamda) \ceq u$ 
once we choose $V \in \AmbSpace$ to be the solution of the linear equation
$\varTheta_{\Polygon}  \, V = b_{\Polygon}$,
where
\begin{align}
	\varTheta_{\Polygon} 
	&\textstyle
	\ceq \sum_{\Edge \in \Edges(\Triangulation)}
		(1-\varphi_\Polygon(\Edge)) \, \varphi_\Polygon(\Edge)
			\, \EdgeLengths_\Polygon(\Edge) \, \Discprnor(\Edge)
	\qand
	\label{eq:DefvarThetaPolygon}
	\\
	b_{\Polygon} 
	&\textstyle
	\ceq U_1 - U_0 - \sum_{\Edge \in \Edges(\Triangulation)} 
	\bigparen{
		\lambda(\Edge) \, \EdgeVectors_\Polygon(\Edge)\
		+ 
		\Discprnor(\Edge) \, \bigparen{
			(1-\varphi_\Polygon(\Edge))^2 \, V_0+ \varphi_\Polygon^2(\Edge) \, V_1
		}
	}\,\EdgeLengths_\Polygon(\Edge) .
	\notag
\end{align}
Showing that the matrix $\varTheta_{\Polygon} $ is invertible with uniformly bounded inverse for all $\Polygon \in \DiscTame^{2,p}(\StrainBound, \EnergyBound,\eta)$
is a bit more involved than in the smooth setting, since $\Tangent_\Polygon$ is not continuous.
We defer this detail to \autoref{lem:DiscreteBoundonTheta} in the appendix.
Given this result, the uniform bound for the operator norm $\nnorm{B_{\Polygon}}_{\sobo[1,p]\to\sobo[2,p]}$ 
is obtained by elementary (but lengthy) calculations (which we skip here deliberately).
\end{proof}

We now turn to analyzing the regularity of discrete minimizers. In particular, the following result justifies our discrete \apriori assumptions. As in the smooth setting, our result requires that the distance between the prescribed end points $\DCond=\Curve|_{\partial \Interval}$ of the curve is strictly less than the curve length $L$. This assumption is necessary in order to rule out the situation where the distance between the end points is equal to $L$ but the first order boundary conditions are not those of a straight line segment, in which case the feasible set $\DiscFeasible$ is empty.

\begin{btheorem}[Regularity of Discrete Minimizers]\label{lem:DiscreteRegularity}
Let $L = \abs{\Interval} > 0$, $\eta\ceq L/ \nabs{\DCond(L) - \DCond(0)}-1 >0$, and $\EnergyBound \geq 0$ be given. Then there are constants $\MaxRadius_0 >0$ and $C \geq 0$ such that the following holds true
for each partition $\Triangulation$ with $\MaxRadius(\Triangulation)\leq \MaxRadius_0$: 
Every critical point $\Polygon \in \DiscFeasible$ of 
\autoref{prob:DiscreteEulerBernoullis} with $\DiscEulerBernoulli(\Polygon) \leq \EnergyBound$
satisfies
\begin{align*}
	\seminorm{\Polygon}_{\sobo[2,\infty][\Polygon]} \leq C
	\qand
	\seminorm{\Polygon}_{\tv[3][\Polygon]} \leq C.
\end{align*}
In particular, the minimizers $\DiscMinimizers$ of \autoref{prob:DiscreteEulerBernoullis}
satisfy the \apriori assumptions specified in \eqref{eq:DiscretePriors}, i.e., $\DiscMinimizers \subset \DiscPriors$.
Under the additional assumption of \emph{almost uniform partitions}, i.e. partitions $\Triangulation$ satisfying
$
	\nabs{\log(\VnextE{\ReferenceEdgeLengths}/\VprevE{\ReferenceEdgeLengths})}
	\leq C \, \min ( \VprevE{\ReferenceEdgeLengths},\VnextE{\ReferenceEdgeLengths})
$,
we even have
$
	\seminorm{\Polygon}_{\sobo[3,\infty][\Polygon]} \leq C
$.
\end{btheorem}
\begin{proof}
We closely follow the proof of the smooth setting, which immediately leads to a discrete analogue of the critical point equation
\eqref{eq:KKTCondition}. Testing this equation 
against those infinitesimal displacements $u \colon \Vertices(\Triangulation) \to \AmbSpace$ satisfying $u(0) = 0$, $\DiscDnor u(\shiftr{0})=0$, $\DiscDnor u(\shiftl{L})=0$, and $\ninnerprod{\EdgeVectors_\Polygon(\Edge) , \DiscD u(\Edge)} = 0$ for all $\Edge \in \Edges(\Triangulation)$, we obtain
\begin{align*}
	\textstyle
	\Discprnor \, \Bigparen{
		\frac{\EprevV{\TurningAngles_\Polygon}}{\VprevE{\DualEdgeLengths_\Polygon}\, \sin(\EprevV{\TurningAngles_\Polygon})}  
		\, 
	    \EprevE{\EdgeVectors_\Polygon}
		+
		\frac{\EnextV{\TurningAngles_\Polygon}}{\VnextE{\DualEdgeLengths_\Polygon}\, \sin(\EnextV{\TurningAngles_\Polygon})}  
		\, 
	    \EnextE{\EdgeVectors_\Polygon}
    }
	=
	\EdgeLengths_\Polygon \, \Discprnor \, \mu_{10}.
\end{align*}
Notice that this is the discrete analogue of \eqref{eq:SmoothRegularity1}, i.e., $\prnor \TangentC'' = \prnor \, \mu_{10}$.
Following the discussion on bounds on Lagrange multipliers in the beginning of \autoref{sec:SmoothRegularity},
the norm of $\mu_{10}$ can be uniformly bounded using \autoref{lem:DiscreteRightinverseDPhi} for $p = 2$.

Adding the identity
\begin{align*}
	\textstyle
	\EdgeVectors_\Polygon \, \Biginnerprod{
	\EdgeVectors_\Polygon,
		\tfrac{\EprevV{\TurningAngles_\Polygon}}{\VprevE{\DualEdgeLengths_\Polygon}\, \sin(\EprevV{\TurningAngles_\Polygon})}  
		\, 
	    \EprevE{\EdgeVectors_\Polygon}
		+
		\tfrac{\EnextV{\TurningAngles_\Polygon}}{\VnextE{\DualEdgeLengths_\Polygon} \, \sin(\EnextV{\TurningAngles_\Polygon})}  
		\, 
	    \EnextE{\EdgeVectors_\Polygon}
	}
	-
		\tfrac{\EprevV{\TurningAngles_\Polygon}}{\VprevE{\DualEdgeLengths_\Polygon}\, \sin(\EprevV{\TurningAngles_\Polygon})}
		\cos(\EprevV{\TurningAngles_\Polygon}) \, \EdgeVectors_\Polygon
	-	\tfrac{\EnextV{\TurningAngles_\Polygon}}{\VnextE{\DualEdgeLengths_\Polygon} \, \sin(\EnextV{\TurningAngles_\Polygon})}
		\cos(\EnextV{\TurningAngles_\Polygon}) \, \EdgeVectors_\Polygon
	= 0
\end{align*}
to the preceding equation and dividing by $\EdgeLengths_\Polygon$
leads to 
\begin{align*}
	\textstyle
	\frac{\EprevV{\TurningAngles_\Polygon}}{\sin(\EprevV{\TurningAngles_\Polygon})}  
	\, 
	\frac{\EprevE{\EdgeVectors_\Polygon} - \EdgeVectors_\Polygon \, \cos(\EprevV{\TurningAngles_\Polygon})}{\VprevE{\DualEdgeLengths_\Polygon}\,\EdgeLengths_\Polygon }
	+
	\frac{\EnextV{\TurningAngles_\Polygon}}{\sin(\EnextV{\TurningAngles_\Polygon})}  
	\, 
	\frac{\EnextE{\EdgeVectors_\Polygon} - \EdgeVectors_\Polygon \, \cos(\EnextV{\TurningAngles_\Polygon})}{\VnextE{\DualEdgeLengths_\Polygon}\,\EdgeLengths_\Polygon }
	=
	\Discprnor \, \mu_{10}
	.
\end{align*}
Notice that in the limit $\EprevV{\TurningAngles_\Polygon} \to 0$ and $\EnextV{\TurningAngles_\Polygon} \to 0$, this equation leads to a second order finite difference equation. More precisely, we have the following discrete analogue of \eqref{eq:IndicatrixBootstrappingEquation}:
\begin{align}
	\DiscLaplacian \EdgeVectors_\Polygon
	&=
	\Discprnor \, \mu_{10}
	+
	\tfrac{
		1-\EnextV{\TurningAngles_\Polygon}/\sin(\EnextV{\TurningAngles_\Polygon})
	}{
		\EnextV{\DualEdgeLengths_\Polygon}\,\EdgeLengths_\Polygon 
	}
	\EnextE{\Tangent_\Polygon}
	+
	\tfrac{
		1-\EprevV{\TurningAngles_\Polygon}/\sin(\EprevV{\TurningAngles_\Polygon})
	}{
		\EprevV{\DualEdgeLengths_\Polygon}\,\EdgeLengths_\Polygon 
	}
	\EprevE{\Tangent_\Polygon}
	+
	\Bigparen{
		\tfrac{
		\EnextV{\TurningAngles_\Polygon}/\tan(\EnextV{\TurningAngles_\Polygon}) - 1
	}{
		\EnextV{\DualEdgeLengths_\Polygon}\,\EdgeLengths_\Polygon 	
	}
	+
	\tfrac{
		\EprevV{\TurningAngles_\Polygon}/\tan(\EprevV{\TurningAngles_\Polygon}) - 1
	}{
		\EprevV{\DualEdgeLengths_\Polygon}\,\EdgeLengths_\Polygon 	
	}
	}
	\Tangent_\Polygon
	.
	\label{eq:DiscreteIndicatrixBootstrappingEquation}
\end{align}
Since
$
	\TurningAngles_\Polygon 
	= 	\DualEdgeLengths_\Polygon^{1/2} 
	\bigparen{\TurningAngles_\Polygon^2 / \DualEdgeLengths_\Polygon}^{1/2}
	\leq \DualEdgeLengths_\Polygon^{1/2} \, \sqrt{\DiscEulerBernoulli(\Polygon)},
$
we may assume that $\TurningAngles_\Polygon \leq \sqrt{\EnergyBound} \, \DualEdgeLengths_\Polygon^{1/2} < \frac{\uppi}{2}$.
One readily checks that  one has
\begin{align*}
	\nabs{1- \alpha/\sin(\alpha)} \leq \tfrac{1}{2} \alpha^2
	\qand
	\nabs{\alpha/\tan(\alpha)-1} \leq \tfrac{1}{2} \alpha^2
	\quad
	\text{for $\alpha \in \nintervalcc{0,\tfrac{\uppi}{2}}$.}
\end{align*}
Thus, we obtain
\begin{align}\label{eq:DiscreteRegularity1}
	\nabs{\DiscLaplacian \EdgeVectors_\Polygon}
	\leq
	\nabs{\mu_{10}}
	+
	\tfrac{
		(\EnextV{\TurningAngles_\Polygon})^2
	}{
		\EnextV{\DualEdgeLengths_\Polygon}\,\EdgeLengths_\Polygon 
	}
	+
	\tfrac{
		(\EprevV{\TurningAngles_\Polygon})^2
	}{
		\EprevV{\DualEdgeLengths_\Polygon}\,\EdgeLengths_\Polygon 
	} ,
\end{align}
which provides us with the following inequality:
\begin{align*}
	\textstyle
	\nseminorm{\Polygon}_{\tv[3][\Polygon]}= \nseminorm{\EdgeVectors_\Polygon}_{\tv[2][\Polygon]}
	=
	\sum_{\InteriorEdge \in \IEdges(\Triangulation)} 
		\nabs{(\DiscLaplacian \EdgeVectors_\Polygon)(\InteriorEdge)} \, \EdgeLengths_\Polygon(\InteriorEdge) 
	\leq
	\nabs{\mu_{10}} \, L + 
	\sum_{\InteriorEdge \in \IEdges(\Triangulation)} 
	\Bigparen{
		\tfrac{
		(\EnextV{\TurningAngles_\Polygon})^2
	}{
		\EnextV{\DualEdgeLengths_\Polygon}
	}
	+
	\tfrac{
		(\EprevV{\TurningAngles_\Polygon})^2
	}{
		\EprevV{\DualEdgeLengths_\Polygon}
	}	
	}
	\leq \nabs{\mu_{10}} \, L + 4\, \EnergyBound
	.
\end{align*}
Notice how this corresponds to $\TangentC\in \SoboC[2,1]$ from \autoref{lem:SmoothRegularity}.
In order to find a $\sobo[1,\infty]$-bound for $\EdgeVectors_\Polygon$, let $\Vertex_0 \in \IVertices(\Triangulation)$ be a vertex such that $\frac{\TurningAngles_\Polygon(\Vertex_0)}{\DualEdgeLengths_\Polygon(\Vertex_0)}$ is minimal.
Observe that
\begin{align*}
	\textstyle
	\paren{
		\frac{\TurningAngles_\Polygon(\Vertex_0)}{\DualEdgeLengths_\Polygon(\Vertex_0)} 
	}^2
	\leq \frac{1}{\Length(\Polygon)}\sum_{\Vertices \in \IVertices(\Triangulation)}
			\paren{
		\frac{\TurningAngles_\Polygon(\Vertex_0)}{\DualEdgeLengths_\Polygon(\Vertex_0)} 
	}^2 \, \DualEdgeLengths_\Polygon(\Vertex)
	\leq \frac{2}{\Length(\Polygon)} \DiscEulerBernoulli(\Polygon) \leq \frac{2 \, \EnergyBound}{L}.
\end{align*}
Hence, for each $\Vertex \in \IVertices(\Triangulation)$, we may deduce
\begin{align*}
	\textstyle
	\bigabs{
		\tfrac{\VnextE{\EdgeVectors_\Polygon}(\Vertex) - \VprevE{\EdgeVectors_\Polygon}(\Vertex)}{\DualEdgeLengths_\Polygon (\Vertex)}
	}
	&\textstyle
	\leq
	\bigabs{\tfrac{\VnextE{\EdgeVectors_\Polygon}(\Vertex_0) - \VprevE{\EdgeVectors_\Polygon}(\Vertex_0)}{\DualEdgeLengths_\Polygon(\Vertex_0)}}
	+ \sum_{
	\substack{
		\InteriorEdge\in\IEdges(\Triangulation)\\
		\text{$\InteriorEdge$ between $\Vertex$ and $\Vertex_0$}
	}}
	\nabs{(\DiscLaplacian \EdgeVectors_\Polygon)(\InteriorEdge)}\, \EdgeLengths_\Polygon(\InteriorEdge)
	\\
	&\textstyle
	\leq
	\bigabs{
		\tfrac{\TurningAngles_\Polygon(\Vertex_0)}{\DualEdgeLengths_\Polygon (\Vertex_0)}
	}
	+ 
	\sum_{\InteriorEdge \in\IEdges(\Triangulation)}
	\nabs{(\DiscLaplacian \EdgeVectors_\Polygon)(\InteriorEdge)}\, \EdgeLengths_\Polygon(\InteriorEdge)
	\leq \sqrt{\tfrac{2 \, \EnergyBound}{L}} + \nabs{\mu_{10}} \, L + 4\,\EnergyBound.
\end{align*}
Using the uniform bound on $\abs{\mu_{10}}$ from above, we obtain that there exists a constant $C=C(L, \EnergyBound, \eta)>0$, such that $\seminorm{\EdgeVectors_\Polygon}_{\sobo[1,\infty][\Polygon]} \leq C$. Compare this to \autoref{lem:SmoothRegularity}; there we found that $\TangentC\in \SoboC[1,\infty]$. 

Assuming that $\Triangulation$ is almost uniform, we may perform an additional step of ``elliptic bootstrapping'':
The inequality $\seminorm{\EdgeVectors_\Polygon}_{\sobo[1,\infty][\Polygon]} \leq C$ together with $\TurningAngles_\Polygon\leq\uppi/2$
implies that
$\EprevV{\TurningAngles_\Polygon} \leq C \, \EprevV{\DualEdgeLengths_\Polygon}$ 
and
$\EnextV{\TurningAngles_\Polygon} \leq C \, \EnextV{\DualEdgeLengths_\Polygon}$. 
Because of $\EdgeLengths_\Polygon = \ReferenceEdgeLengths$ and since $\Triangulation$ is almost uniform, we have $\shiftll{\EdgeLengths_\Polygon}$, $\shiftrr{\EdgeLengths_\Polygon} \leq \tilde C \, \EdgeLengths_\Polygon$ (for sufficiently small $\MaxRadius(\Triangulation)$), and we may deduce that
\begin{align*}
	\tfrac{(\shiftr{\TurningAngles_\Polygon})^2}{\shiftr{\DualEdgeLengths_\Polygon}  \EdgeLengths_\Polygon}
	+
	\tfrac{(\shiftl{\TurningAngles_\Polygon})^2}{\shiftl{\DualEdgeLengths_\Polygon} \EdgeLengths_\Polygon}
	\leq 
	C^2 \, \Bigparen{
		\tfrac{(\shiftr{\DualEdgeLengths_\Polygon})^2}{\shiftr{\DualEdgeLengths_\Polygon}  \EdgeLengths_\Polygon}
		+
		\tfrac{(\shiftl{\DualEdgeLengths_\Polygon})^2}{\shiftl{\DualEdgeLengths_\Polygon}  \EdgeLengths_\Polygon}
		}
	=
	\tfrac{1}{2} \,C^2 \, \Bigparen{
		\tfrac{\EdgeLengths_\Polygon + \shiftrr{\EdgeLengths_\Polygon}}{\EdgeLengths_\Polygon}	
		+
		\tfrac{\shiftll{\EdgeLengths_\Polygon} + \EdgeLengths_\Polygon}{\EdgeLengths_\Polygon}		
		}
	\leq
	C^2 \, \nparen{ 1+ \tilde C	}.	
\end{align*}
Substituting these inequalities into \eqref{eq:DiscreteRegularity1}
shows that
\begin{align*}
		\seminorm{\Polygon}_{\sobo[3,\infty][\Polygon]} = \nnorm{\DiscLaplacian \EdgeVectors_\Polygon}_{\ell^\infty} \leq C,
\end{align*}
which is in perfect correspondence with $\Curve \in \SoboC[3,\infty]$ from \autoref{lem:SmoothRegularity}
\end{proof}

\clearpage

\section{Reconstruction}
\label{sec:Reconstruction}

As outlined in the proof of \autoref{theo:ConvergenceTheorem}, we require a reconstruction operator $\Reconstruction$ that maps feasible polygons to feasible smooth curves with good approximation properties (in particular with respect to the elastic energy). To this end, we first construct an approximate reconstruction operator $\ReconstructionApprox$ that maps feasible polygons to \emph{almost} feasible curves (i.e., smooth curves with small constraint violation). Afterwards, we construct a restoration operator $\Adjust$ that repairs the constraint violation of such almost feasible smooth curves. The final reconstruction operator is defined by the composition $\Reconstruction= \Adjust \circ \ReconstructionApprox$.

\paragraph*{Notation} As before, let $\Interval = \intervalcc{0,L}$ be a compact interval and  $\Triangulation$ a partition of $\Interval$.
Throughout this section, we fix 
boundary conditions $\DCond \colon \partial \Interval \to \AmbSpace$ and $\NCond \colon  \partial \Interval \to \Sphere$. The constraints are encoded into the mapping $\ConstraintMap$ from \eqref{eq:DefTargetSpace}.

\subsection{Approximate Reconstruction Operator}\label{sec:ApproximateReconstruction}

We begin by constructing approximate reconstruction operators on the set $\DiscPriors$ of discrete \apriori information (see \eqref{eq:DiscretePriors}).

\begin{bproposition}[Approximate Reconstruction Operator]\label{prop:ApproximateReconstructionTheorem}
There exist constants $\MaxRadius_0>0$ and $C \geq 0$ such that for each partition $\Triangulation$ of $\Interval$ with $\MaxRadius(\Triangulation) \leq \MaxRadius_0$, there is an \emph{approximate reconstruction operator} $\ReconstructionApprox \colon \DiscPriors \to \ConfSpace$ with the following properties
for each $\Polygon \in \DiscPriors$
and $\Curve = \ReconstructionApprox(\Polygon)$:
\begin{enumerate}
	\item \label{item:ARConsistency} Energy consistency: 
	$\EulerBernoulli(\Curve) \leq  \DiscEulerBernoulli (\Polygon) + C \, \MaxRadius(\Triangulation)$.
	\item \label{item:ARProximity} $\Sobo[1,\infty]$-proximity: 
$\nnorm{\Curve -  \DiscTestMap (\Polygon) }_{\Sobo[1,\infty]} 
\leq C \, \MaxRadius(\Triangulation)$,
	where $\DiscTestMap$ is the piecewise affine interpolation operator from \eqref{eq:piecewiselinear}.
	\item \label{item:ARStrain} 
	Strain consistency: 
	$\LogStrain_\Curve = 0$.
	\item \label{item:ARFeasibility}
	Approximate feasibility: 
	$
	\nnorm{\ConstraintMap(\Curve)}_{\TV[2]}
	\leq 
	C \, \MaxRadius(\Triangulation)$.
	\item \label{item:ARCurvatureConsistency} 
	Curvature consistency:\footnote{Recall that $\DualEdge(\Vertex)$ denotes the dual edge of an interior vertex $\Vertex \in \IVertices(\Triangulation)$ and that $\shiftr{0}$ and $\shiftl{L}$ denote the two boundary edges. See \autoref{sec:DiscreteSetting} for details.}
	\begin{enumerate}
		\item 
		\label{item:ARCurvatureConsistencyInterior} 
		$\sup_{r \in \DualEdge(\Vertex)} \nabs{\Curvature_{\Curve}(r) - \Curvature_\Polygon(\Vertex)}
	\leq C \, \MaxRadius(\Triangulation)$
	\quad for each $\Vertex\in \IVertices(\Triangulation)$.
		\item 
		\label{item:ARCurvatureConsistencyBoundary} 
		$\sup_{r \in \shiftr{0}} \nabs{\Curvature_{\Curve}(r) - \Curvature_\Polygon(\shiftrr{0})}
	\leq C \, \MaxRadius(\Triangulation)
		\qand
		\sup_{r \in \shiftl{L}} \nabs{\Curvature_{\Curve}(r) - \Curvature_\Polygon(\shiftll{L})}
	\leq C \, \MaxRadius(\Triangulation)$.	
	\end{enumerate}
	\item \label{item:ARFTCR} 
	Finite total curvature rate: 
	$\nseminorm{\Curve}_{\TV[3]} = \nseminorm{\Curve}_{\TV[3][\Curve]} \leq C$.
\end{enumerate}
\end{bproposition}
\begin{proof}
For $\Polygon \in \DiscPriors$,
we construct $\Curve = \ReconstructionApprox(\Polygon)$
as a piecewise circular curve with $C^1$-continuity.
The basic idea is to interpolate the discrete indicatrix $\Tangent_\Polygon$ by a piecewise geodesic curve on the sphere in order to obtain an indicatrix $\Tangent_\Curve$ of class $C^0$.
This way, we obtain a curve $\Curve \in \Imm[2,\infty][][\Interval][\AmbSpace] \cap \BV[3][][\Interval][\AmbSpace]$. More concretely, we first define the unit tangents of $\Curve$ at edge midpoints by putting
$
	\Tangent_{\Curve} (\EdgeMidpoints(\Edge)) \ceq \EdgeVectors_\Polygon(\Edge) 
$
for each $\Edge \in \Edges(\Triangulation)$.
We then extent $\Tangent_{\Curve}$ to a continuous, piecewise geodesic curve $\Tangent_{\Curve} \colon \Interval \to \Sphere$ on the unit sphere $\Sphere \subset \AmbSpace$ by requiring that the restrictions
$\Tangent_{\Curve}|_{\shiftr{0}}$,
$\Tangent_{\Curve}|_{\shiftl{L}}$,
and
$\Tangent_{\Curve}|_{\DualEdge{\Vertex}}$ for each interior vertex $\Vertex \in \IVertices(\Triangulation)$ 
are geodesics (see \autoref{fig:ApproximateReconstructionOperator}). Third, we define  $\Curve(t) \ceq q(0) + \int_0^t \Tangent_\Curve(r) \, \dd r$.

It remains to verify the claims of the theorem.
By construction, we have $\Curve(0) = q(0)$ and $\LogStrain_\Curve=0$; in particular, this implies Statement~\ref{item:ARStrain}.
\begin{figure}[t]
\renewcommand{\myincludegraphics}[2]{\begin{tikzpicture}
    \node[inner sep=0pt] (fig) at (0,0) {\includegraphics{#1}};
	\node[below right= 1ex] at (fig.north west) {\footnotesize\textbf{(#2)}};    
\end{tikzpicture}
}
\capstart
    \setkeys{Gin}{%
        trim = 130 240 60 250, 
        clip=true, 
        width=0.4\textwidth
    }
\presetkeys{Gin}{clip}{}
\begin{center}
\myincludegraphics{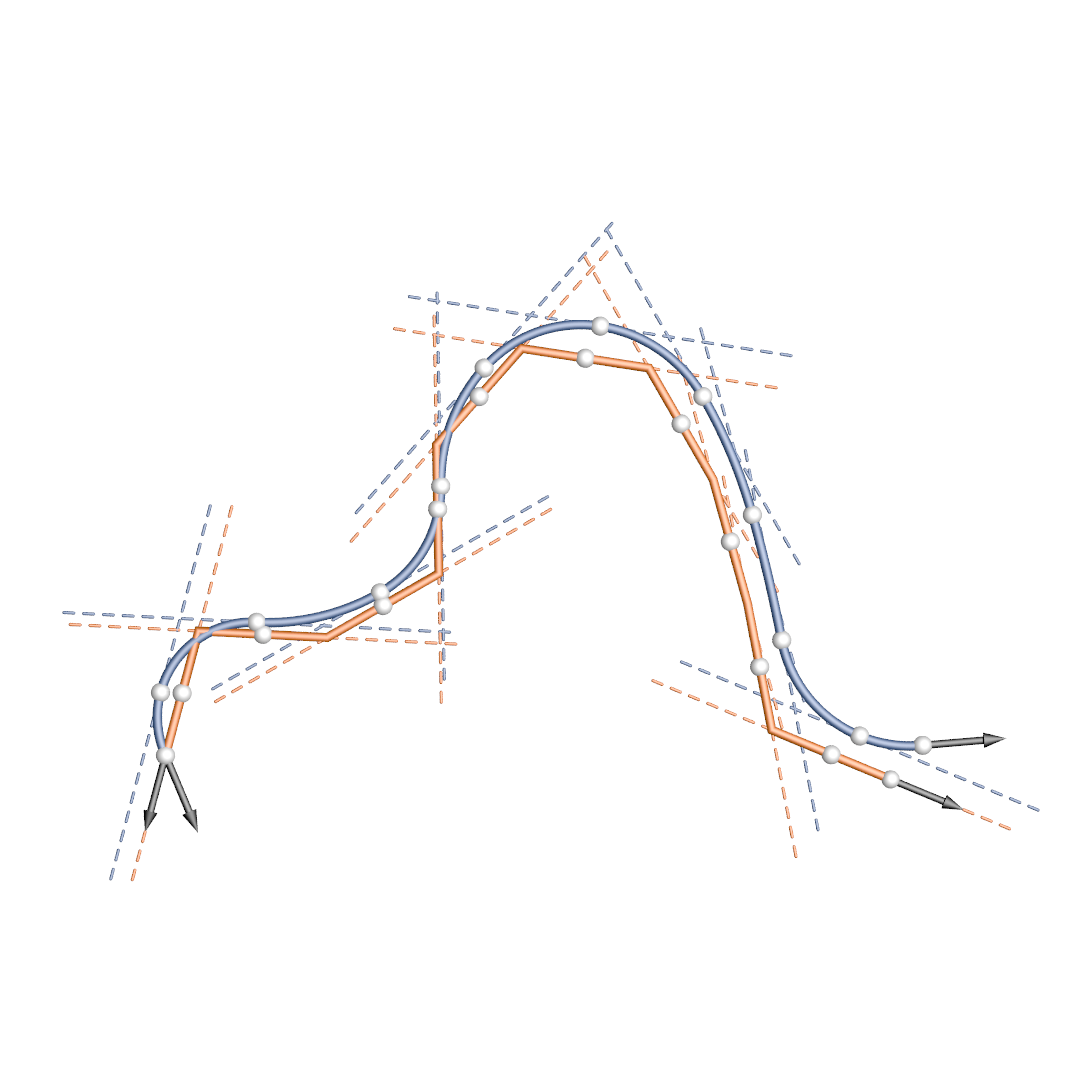}{a}
\myincludegraphics{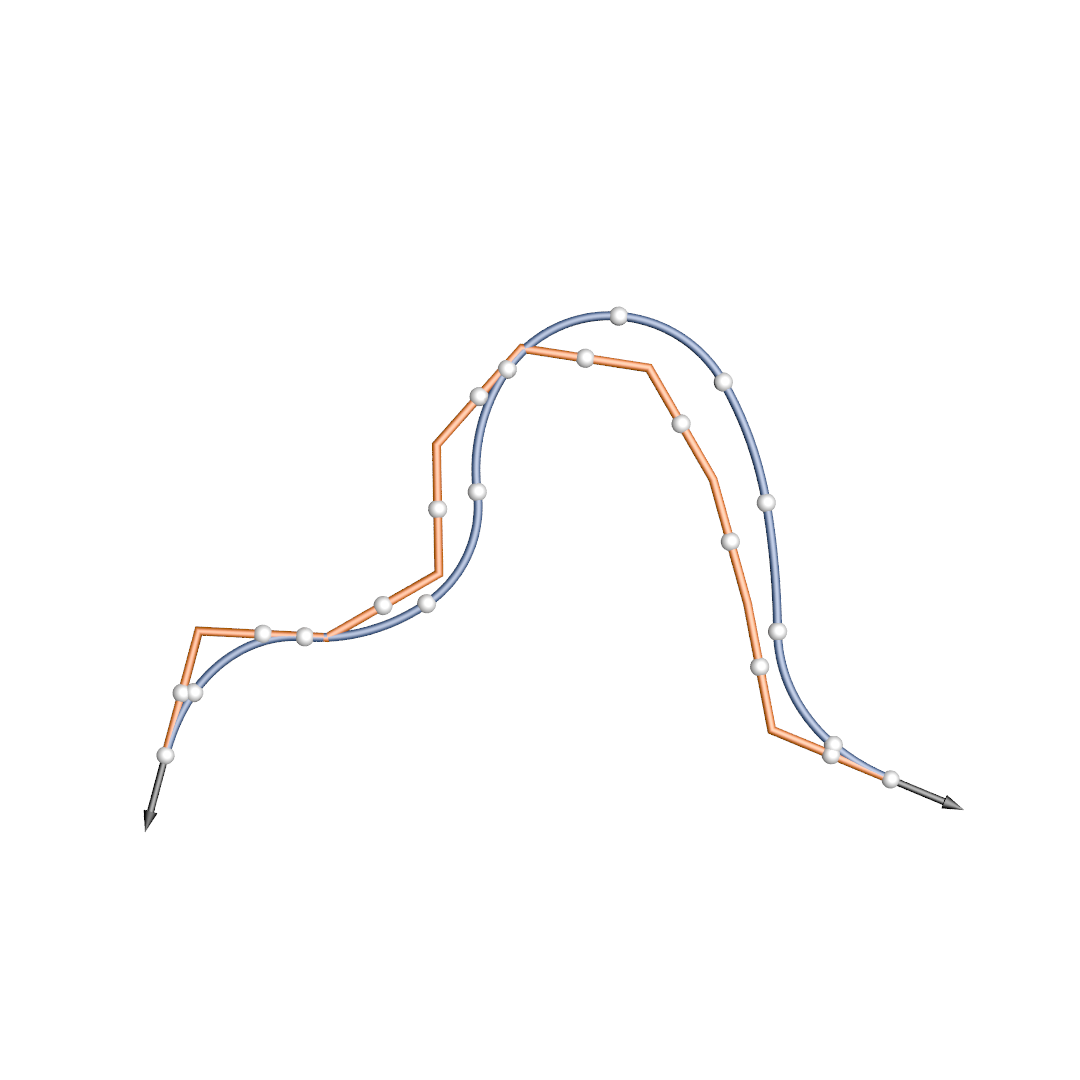}{b}
\end{center}
\caption{
(a) A discrete curve $\Polygon$ (orange) is smoothened by a piecewise circular curve $\ReconstructionApprox(\Polygon)$ (blue) such that tangents of  $\Polygon$ at edge midpoints agree with tangents of $\ReconstructionApprox(\Polygon)$ where circular arcs meet (white points). These curves have the same length, leading to differences between their end points and boundary tangents (gray), which can be controlled.
(b) These differences can be repaired by applying the restoration operator~$\Adjust$, leading to the curve $\Reconstruction(\Polygon) = \Adjust \circ \ReconstructionApprox(\Polygon)$ (blue).
}
\label{fig:ApproximateReconstructionOperator}
\end{figure}
Each circular arc that belongs to an interior dual edge $\DualEdge(\Vertex)$ has constant absolute curvature given by  $\frac{\TurningAngles_\Polygon(\Vertex)}{\DualEdgeLengths_\Polygon(\Vertex)}$. 
So the net bending energy contributed by the arcs of $\Curve$ belonging to interior dual edges is exactly equal to $\DiscEulerBernoulli(\Polygon)$. We have only small excess energy on the dual edges of the two boundary points, i.e.:
\begin{align*}
	\textstyle
	\EulerBernoulli(\Curve)
	=  \DiscEulerBernoulli(\Polygon) 
	+ \frac{1}{4} \paren{\frac{\TurningAngles_\Polygon(\shiftrr{0})}{\DualEdgeLengths_\Polygon(\shiftrr{0})}}^2 \, \EdgeLengths_\Polygon(\shiftr{0})
	+ \frac{1}{4} \paren{\frac{\TurningAngles_\Polygon(\shiftll{L})}{\DualEdgeLengths_\Polygon(\shiftll{L})}}^2 \,\EdgeLengths_\Polygon(\shiftr{L}).
\end{align*}
From
$\Curvature_\Polygon = \frac{1}{\DualEdgeLengths_\Polygon} \bigparen{\shiftr{\Tangent_{\Polygon}} - \shiftl{\Tangent_{\Polygon}}}$,
we deduce that $\nabs{\Curvature_\Polygon} = \tfrac{2 \, \sin(\TurningAngles_\Polygon/2)}{\DualEdgeLengths_\Polygon}$. 
Thus we have
$\frac{\TurningAngles_\Polygon}{\DualEdgeLengths_\Polygon} \leq 2 \frac{2 \sin(\TurningAngles_\Polygon/2)}{\DualEdgeLengths_\Polygon} = 2\, \nabs{\Curvature_\Polygon}  \leq 2 \, \EnergyBound_1$, so that we may deduce Statement \ref{item:ARConsistency}.

Let $\Vertex \in \IVertices(\Triangulation)$ be an interior vertex and let $r \in \DualEdge(\Vertex)^\circ$ be an interior point of its dual edge.
Then we have
$\Dtot \Curvature_\Curve(r) = - \Tangent_\Curve(r) \, \bigparen{\tfrac{\TurningAngles_\Polygon(\Vertex)}{\DualEdgeLengths_\Polygon(\Vertex)}}^2$.
Similar expressions for $\Dtot \Curvature_\Curve(r)$ can be derived for points $r$ in edges $\shiftr{0}$ and $\shiftl{L}$ of the boundary points.
This way, we obtain $\nnorm{\Dtot \Curvature_\Curve}_{L^\infty} \leq \EnergyBound_1^2$.
Since we have $\Tangent_{\DiscTestMap(\Polygon)}(r) = \Tangent_{\Polygon}(\Edge)$ for each edge $\Edge \in \Edges(\Triangulation)$ and $r \in \Edge^\circ$,
this leads to
\begin{align}
	\Angle{
			\Tangent_{\Curve}(r)
		}{
			\Tangent_{\DiscTestMap(\Polygon)}(r)
		}
	=
	\Angle{
			\Tangent_{\Curve}(r)
		}{
			\Tangent_{\Polygon}(\Edge)
		}
	=
	\Angle{
			\Tangent_{\Curve}(r)
		}{
			\Tangent_{\Curve}(\EdgeMidpoints(\Edge))
		}
	\leq \tfrac{1}{2} \nnorm{\Dtot \Tangent_{\Curve}}_{L^\infty} \, \EdgeLengths_\Polygon(\Edge)
	\leq C \, \MaxRadius(\Triangulation).
	\label{eq:ARAngleEstimate}
\end{align}
Because of $\LogStrain_{\DiscTestMap(\Polygon)} = \LogStrain_{\Curve} = 0$, we obtain
$\nseminorm{\Curve - \DiscTestMap(\Polygon)}_{\Sobo[1,\infty]} \leq C \, \MaxRadius(\Triangulation)$. Together with $\Curve(0) = \DiscTestMap(\Polygon)(0)$, this shows
Statement \ref{item:ARProximity}.

As for Statement~\ref{item:ARFeasibility},
we have 
$\nnorm{\Curve|_{\partial \Interval}-\DCond} \leq \nnorm{\Curve - \DiscTestMap(\Polygon)}_{L^\infty} + \nnorm{\DiscTestMap(\Polygon)|_{\partial \Interval}-\DCond}_{L^\infty} \leq C \, \MaxRadius(\Triangulation)$
and
\begin{align*}
	\nabs{\NCond(0) - \Tangent_\Curve(0)} \leq \nabs{\NCond(0) - \Tangent_\Polygon(\shiftr{0})} +  \nabs{\Tangent_\Polygon(\shiftr{0}) - \Tangent_\Curve(0)}
\leq 0 + \tfrac{1}{2} \nnorm{\Dtot \Tangent_\Curve}_{L^\infty} \, \EdgeLengths_\Polygon(\shiftr{0}) \leq C \, \MaxRadius(\Triangulation).
\end{align*}
Analogously, one shows that $\nabs{\NCond(L) - \Tangent_\Curve(L)} \leq C \, \MaxRadius(\Triangulation)$. Since $\LogStrain_\Curve = 0$, this shows Statement~\ref{item:ARFeasibility}.

Let $\Vertex \in \IVertices(\Triangulation)$ be an interior vertex and let $r \in \DualEdge(\Vertex)^\circ$ be an interior point of its dual edge.
Because $\Curvature_\Polygon(\Vertex)$ is contained in the two-dimensional span of $\set{\Curvature_\Curve(r) | r \in \DualEdge(\Vertex)^\circ }$, we obtain $\bigAngle{\Curvature_\Curve(r)}{\Curvature_\Polygon(\Vertex)} 
\leq \TurningAngles_\Polygon(\Vertex)$.
Now the triangle inequality implies
\begin{align*}
	\nabs{\Curvature_\Curve(r) - \Curvature_\Polygon(\Vertex)}
	&\leq 
	\nabs{\nabs{\Curvature_\Curve(r)} - \nabs{\Curvature_\Polygon(\Vertex)}}
	+
	\nabs{\Curvature_\Curve(r)} \, 
	\bigAngle{
		\Curvature_\Curve(r)}{\Curvature_\Polygon(\Vertex)}
	\\
	&\leq \nabs{
		\tfrac{\TurningAngles_\Polygon(\Vertex)}{\DualEdgeLengths_\Polygon(\Vertex)}
		- 
		\tfrac{2 \, \sin(\TurningAngles_\Polygon(\Vertex)/2)}{\DualEdgeLengths_\Polygon(\Vertex)}
	}
	+
	\tfrac{\TurningAngles_\Polygon(\Vertex)}{\DualEdgeLengths_\Polygon(\Vertex)}
	\,
	\TurningAngles_\Polygon(\Vertex)
	\leq 	
	\tfrac{\TurningAngles_\Polygon(\Vertex)}{\DualEdgeLengths_\Polygon(\Vertex)}
	\bigparen{
		\TurningAngles_\Polygon(\Vertex) + \tfrac{1}{24} \, \TurningAngles_\Polygon(\Vertex)^2
	}
	\leq C \, \tfrac{\TurningAngles_\Polygon(\Vertex)^2}{\DualEdgeLengths_\Polygon(\Vertex)}
	\leq C \, \MaxRadius(\Triangulation).
\end{align*}
This proves Statement~\ref{item:ARCurvatureConsistencyInterior}, and
Statement~\ref{item:ARCurvatureConsistencyBoundary} can be shown analogously.
Finally, we derive the following estimate for the jumps of $\Curvature_\Curve$:
\begin{align*}
	\nabs{
		[\![\Curvature_\Curve]\!](\EdgeMidpoints(\Edge))
		-
		(\EnextV{\Curvature_\Polygon}(\Edge) - \EprevV{\Curvature_\Polygon}(\Edge))	
	}
	&=
	\bigabs{
		\lim_{t \searrow \EdgeMidpoints(\Edge)} \Curvature_\Curve(t)
		-
		\lim_{t \nearrow \EdgeMidpoints(\Edge)} \Curvature_\Curve(t)		
		-
		(\EnextV{\Curvature_\Polygon}(\Edge) - \EprevV{\Curvature_\Polygon}(\Edge))
	}
	\leq 
	C \, \Bigparen{
		\tfrac{\EprevV{\TurningAngles_\Polygon}(\Edge)^2}{\EprevV{\DualEdgeLengths_\Polygon}(\Edge)}
		+
		\tfrac{\EnextV{\TurningAngles_\Polygon}(\Edge)^2}{\EnextV{\DualEdgeLengths_\Polygon}(\Edge)}
	}.
\end{align*}
Thus, we obtain Statement~\ref{item:ARFTCR} from
\begin{align*}
	\nseminorm{\Curve}_{\TV[3][\Curve]}
	=
	\nseminorm{\Curvature_\Curve}_{\TV[1][\Curve]}
	&\leq \textstyle
	\sum_{\Vertex\in\IVertices(\Triangulation)} \int_{\DualEdge(\Vertex)} \nabs{ \Dtot \Curvature_\Curve(t)} \, \LineElementC(t)	
	+ \sum_{\Edge\in\Edges(\Triangulation)}
	\nabs{[\![\Curvature_\Curve]\!](\EdgeMidpoints(\Edge))}
	\leq 
	\nseminorm{\Polygon}_{\tv[3][\Polygon]} 
	+ 2 \, C \, \DiscEulerBernoulli(\Polygon).	
\end{align*}
\end{proof}

\subsection{Restoration Operator}\label{sec:RestorationOperator}

Our aim in this section is to prove that for sufficiently tame immersions, small constraint violations can be repaired by perturbations of comparable size.
Recall the definition of $\Tame^{k,p}(\StrainBound, \EnergyBound,\eta)$ from \eqref{eq:DiscreteThetaConditions}.
We now define the  significantly smaller set
\begin{align}
	\RestorationPriors \ceq 
		\set{\Curve \in \Tame^{2,\infty}(\StrainBound,\EnergyBound,\eta)
		| 
			\nseminorm{\LogStrain_\Curve}_{\TV[2][\Curve]}			
			\leq \StrainBound, \, 
			\nseminorm{\Tangent_\Curve}_{\TV[2][\Curve]}			
			\leq \EnergyBound 
	}.
	\label{eq:RestorationPriors}
\end{align}

\begin{bproposition}[Restoration Operator]\label{prop:RestorationOperator}
There exist $C  > 0$,
$\varepsilon > 0$,
and a \emph{restoration operator}
\begin{align*}
	\Adjust \colon \set{ 
		\Curve \in \RestorationPriors
		| 
		\nnorm{\ConstraintMap(\Curve)}_{\TV[2]} < \varepsilon
	} \to \Feasible
\end{align*}
with the following properties:
\begin{enumerate}
	\item Proximity:\label{item:RestorationProximity} 
	$\nnorm{\Adjust(\Curve) - \Curve}_{\TV[3]}
	\leq C \, \nnorm{\ConstraintMap(\Curve)}_{\TV[2]}$.
	\item Energy consistency:\label{item:RestorationConsistency}
	$\nabs{(\EulerBernoulli \circ \Adjust)(\Curve) - \EulerBernoulli(\Curve)} \leq C\, \nnorm{\ConstraintMap(\Curve)}_{\TV[2]}$.	
\end{enumerate}
\end{bproposition}
\begin{proof}
Fix $\Curve \in U  \ceq \set{ 
		\Curve \in \RestorationPriors
		| 
		\nnorm{\ConstraintMap(\Curve)}_{\TV[2]} < \varepsilon
	}$.
We are going to apply the Newton--Kantorovich theorem (see \autoref{lem:KantorovichVarian} in \autoref{sec:NewtonKantotovich}) to a suitable mapping $F$ between Banach spaces to obtain a curve
$\Adjust(\Curve)$ close to $\Curve$ that satisfies $\ConstraintMap(\Adjust(\Curve)) =0$.
Since the Neumann conditions $\NCond$ map into spheres (and thus their differentials are surjective only onto the tangent spaces of these spheres and not onto $\AmbSpace$), we introduce auxiliary variables and define the mapping
\begin{gather*}
	F \colon (\ConfSpace\cap \BVC[3]) \times \R^2 \to Y \ceq \AmbSpace \times \AmbSpace \times \AmbSpace \times \AmbSpace \times \BV[2][][\Interval],
	\\
	F(\Curve,z_0,z_1)
	\ceq \bigparen{
		\Curve(0) \!-\! \DCond(0),
		\Curve(L) \!-\! \DCond(L),
		\ee^{z_0} \, \Tangent_\Curve(0) \!-\! \NCond(0),
		\ee^{z_1} \, \Tangent_\Curve(L) \!-\! \NCond(L),
		\LogStrain_\Curve
	}.	
\end{gather*}
Notice 
that $F(\Curve,z_0,z_1) = 0$ is equivalent to 
$(\ConstraintMap(\Curve) ,z_0,z_1) = (0,0,0)$.
The derivative of $F$ in direction $(u,w_0,w_1) \in X$ is easily computed:
\begin{align*}
	\MoveEqLeft
		DF(\Curve,z_0,z_1)\,(u,w_0,w_1)\\
	&= \bigparen{
		u(0),
		u(L),
		\ee^{z_0} \,\Dnor u(0) + \ee^{z_0} \, w_0 \, \Tangent_\Curve (0),
		\ee^{z_1} \,\Dnor u(L) + \ee^{z_1} \, w_1 \, \Tangent_\Curve (L),
		\ninnerprod{\TangentC,\Dtot u}
	}.
\end{align*}
Thus, a bounded 
right inverse $R_{(\Curve,z_0,z_1)}$ of $DF(\Curve,z_0,z_1)$ can be readily constructed from the right inverse $B_{\Curve}$ of $D\ConstraintMap(\Curve)$ from \autoref{lem:SmoothRightinverseDPhi}. 
Indeed, uniform boundedness of $\nnorm{B_{\Curve}}_{\TV[2] \to \TV[3]}$ is checked straight-forwardly by applying product rules for functions in spaces $\Sobo[k,\infty]$ and
$\BV[k]$.

By \autoref{lem:SmoothDDPhiBound} below, $DF$ is Lipschitz continuous
on each set $\tilde W$ of the form $\tilde W \ceq \tilde\Priors(\tilde\StrainBound,\tilde\EnergyBound,\tilde\eta) \times \nintervaloo{-R,R}^2$ 
with 
$\StrainBound \leq \tilde \StrainBound < \infty$ 
$\EnergyBound \leq \tilde \EnergyBound < \infty$,
$0 < \tilde \eta \leq \eta$,
and
$R>0$.
Put 
$\tilde U \ceq U \times \nintervaloo{-R/2,R/2}^2$.
\autoref{lem:NormEquivalences} guarantees 
that we can pick an $r>0$ such that 
the \hbox{$\nnorm{\cdot}_{\TV[3]}$-ball}
$\ClosedBall{\tilde U}{2\,r}$ is still contained in $\tilde W$.
By further decreasing $r$ and $\varepsilon$ if necessary,
we can fulfill the conditions of \autoref{lem:KantorovichVarian}. The lemma provides us with a triple $(\Adjust(\Curve),z_0,z_1)$ such that $F(\Adjust(\Curve),z_0,z_1)=0$.
The considerations from above imply that $z_0 = z_1 =0$ and that $\ConstraintMap(\Adjust(\Curve)) = 0$.
Moreover, \autoref{lem:KantorovichVarian} provides us with the estimate
\begin{align*}
	\nnorm{\Adjust(\Curve) - \Curve}_{\TV[3]}
	= \nnorm{(\Adjust(\Curve),0,0) - (\Curve,0,0)}
	\leq C \, \nnorm{F(\Curve,0,0)} 
	= C \, \nnorm{\ConstraintMap(\Curve)}_{\TV[2]}.
\end{align*}
Hence, feasibility $\Adjust(\Curve) \in \Feasible$ and
Claim~\ref{item:RestorationProximity}
are already established. We are left to show consistency of the bending energy;
this follows from the fact that $\EulerBernoulli$ is Lipschitz-continuous on $\Tame^{2,\infty}(\tilde\StrainBound,\tilde\EnergyBound,\tilde\eta)$ (see \autoref{lem:EulerBernoulliisLipschitz} in \autoref{sec:EulerBernoulliisLipschitz}).
\end{proof}

\newpage
\subsection{Reconstruction Operator}
Combining approximate reconstruction with restoration yields our final reconstruction operator  $\Reconstruction= \Adjust \circ \ReconstructionApprox$. Its properties, summarized below, follow immediately from \autoref{prop:ApproximateReconstructionTheorem} and \autoref{prop:RestorationOperator}.
We would like to point out that \autoref{prop:ApproximateReconstructionTheorem}, Statement~\ref{item:ARFeasibility} guarantees that $\ReconstructionApprox(\DiscPriors) \subset \dom(\Adjust)$ for each partition $\Triangulation$ with sufficiently small $\MaxRadius(\Triangulation)$.

\begin{btheorem}[Reconstruction Operator]\label{theo:ReconstructionTheorem}
There are constants $\MaxRadius_0>0$ and $C \geq 0$ such that for each partition $\Triangulation$ of $\Interval$ with $\MaxRadius(\Triangulation) \leq \MaxRadius_0$, there is a reconstruction operator $\Reconstruction \colon \DiscPriors \to \Feasible$ 
with the following properties:
\begin{enumerate}
\item Energy consistency: \label{item:RConsistency}
$\nabs{(\EulerBernoulli \circ \Reconstruction)(\Polygon) - \DiscEulerBernoulli (\Polygon)}  \leq C\, \MaxRadius(\Triangulation)$.
\item $\Sobo[1,\infty]$-proximity: \label{item:RProximity}
$\nnorm{ \Reconstruction(\Polygon) - \DiscTestMap (\Polygon) }_{\Sobo[1,\infty]} \leq C\, \MaxRadius(\Triangulation)$.
	\item \label{item:RCurvatureConsistency} 
	Curvature consistency:
	\begin{enumerate}
		\item 
		\label{item:RCurvatureConsistencyInterior} 
		$\sup_{r \in \DualEdge(\Vertex)} \nabs{\Curvature_{\Reconstruction(\Polygon)}(r) - \Curvature_\Polygon(\Vertex)}
	\leq C \, \MaxRadius(\Triangulation)$
	\quad for each $\Vertex\in \IVertices(\Triangulation)$.
		\item 
		\label{item:RCurvatureConsistencyBoundary} 
		$\sup_{r \in \shiftr{0}} \nabs{\Curvature_{\Reconstruction(\Polygon)}(r) - \Curvature_\Polygon(\shiftrr{0})}
	\leq C \, \MaxRadius(\Triangulation)
		\qand
		\sup_{r \in \shiftl{L}} \nabs{\Curvature_{\Reconstruction(\Polygon)}(r) - \Curvature_\Polygon(\shiftll{L})}
	\leq C \, \MaxRadius(\Triangulation)$.	
	\end{enumerate}
	\item \label{item:RFTCR} 
	Finite total curvature rate: 
	$\nseminorm{\Reconstruction(\Polygon)}_{\TV[3]} \leq  C$.	
\end{enumerate}
\end{btheorem}

\clearpage

\section{Sampling}\label{sec:Sampling}

We closely follow the outline of the previous section. This time, we construct a sampling operator $\Sampling$ that maps smooth, feasible curves to feasible polygons with good approximation properties (in particular with respect to the elastic energy). To this end, we first construct an approximate sampling operator $\SamplingApprox$ that maps smooth, feasible curves to \emph{almost} feasible polygons (i.e., polygons with small constraint violation, see \autoref{fig:SamplingOperator}~(b)). Afterwards, we construct a discrete restoration operator $\DiscAdjust$ that repairs the constraint violation of these almost feasible polygons. The final sampling operator is defined by the composition $\Sampling= \DiscAdjust\circ \SamplingApprox$ (see \autoref{fig:SamplingOperator}~(c)).

\paragraph*{Notation} As before, let $\Interval = \intervalcc{0,L}$ be a compact interval and  $\Triangulation$ a partition of $\Interval$.
Throughout this section, we fix 
boundary conditions $\DCond \colon \partial \Interval \to \AmbSpace$ and $\NCond \colon  \partial \Interval \to \Sphere$. Recall also the definitions of 
the constraint map $\DiscConstraintMap$ from~\eqref{eq:DefDiscTargetSpace}
and of the set 
$\Priors$ from~\eqref{eq:SmoothPriors}.

\begin{figure}[t]
\capstart
    \setkeys{Gin}{%
        trim = 48 115 5 150, 
        clip=true, 
        width=0.32\textwidth
    }
\presetkeys{Gin}{clip}{}
\begin{center}
\myincludegraphics{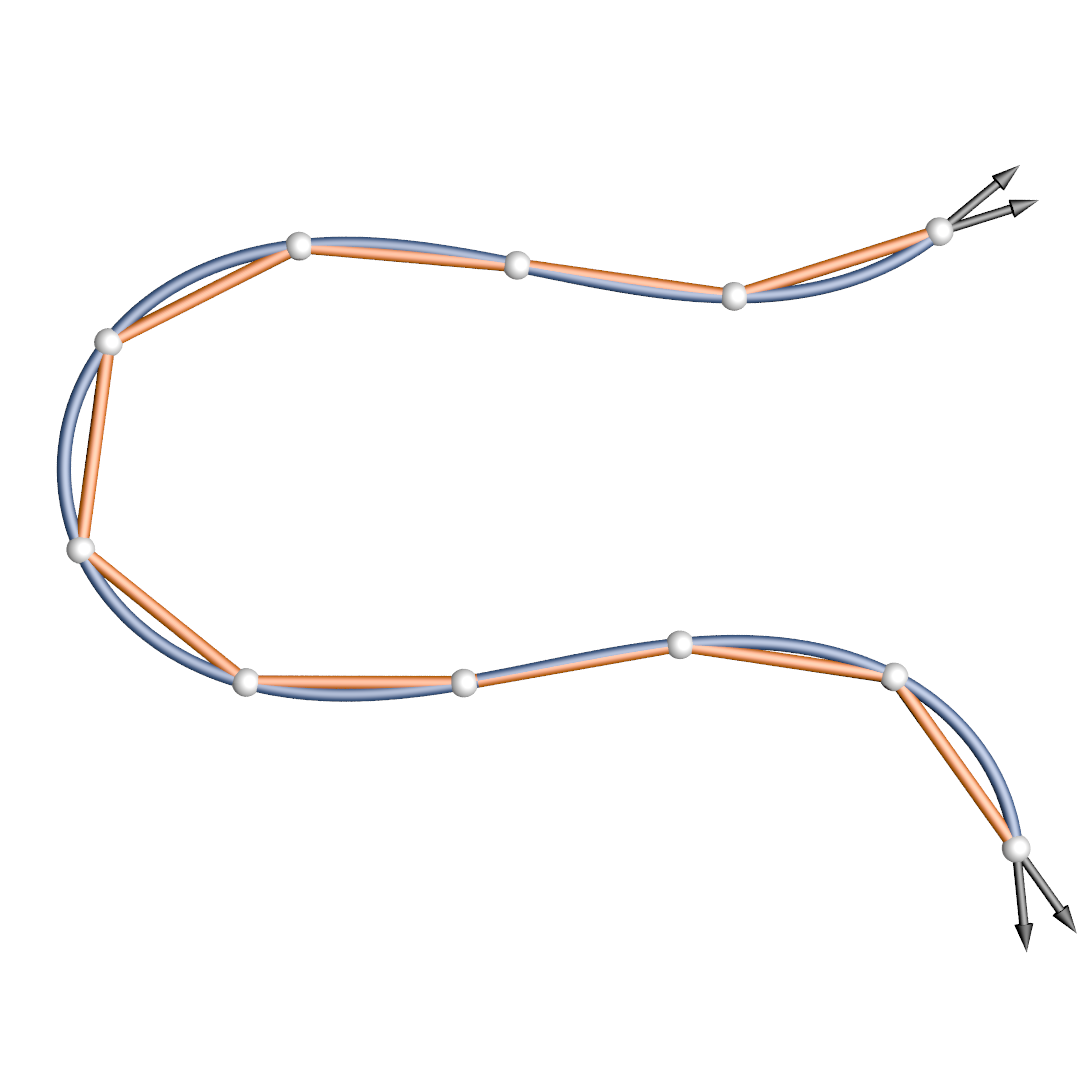}{a}
\myincludegraphics{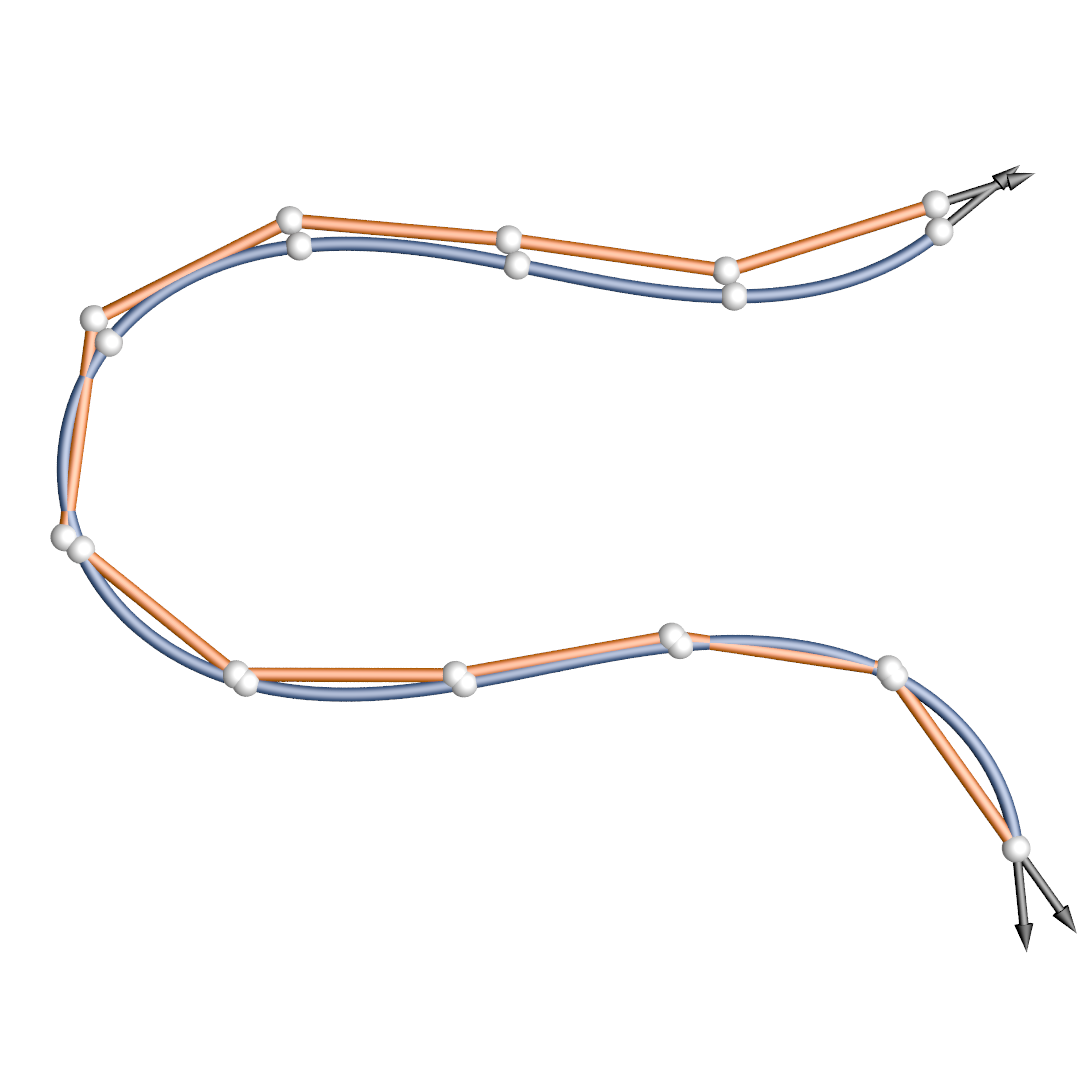}{b}
\myincludegraphics{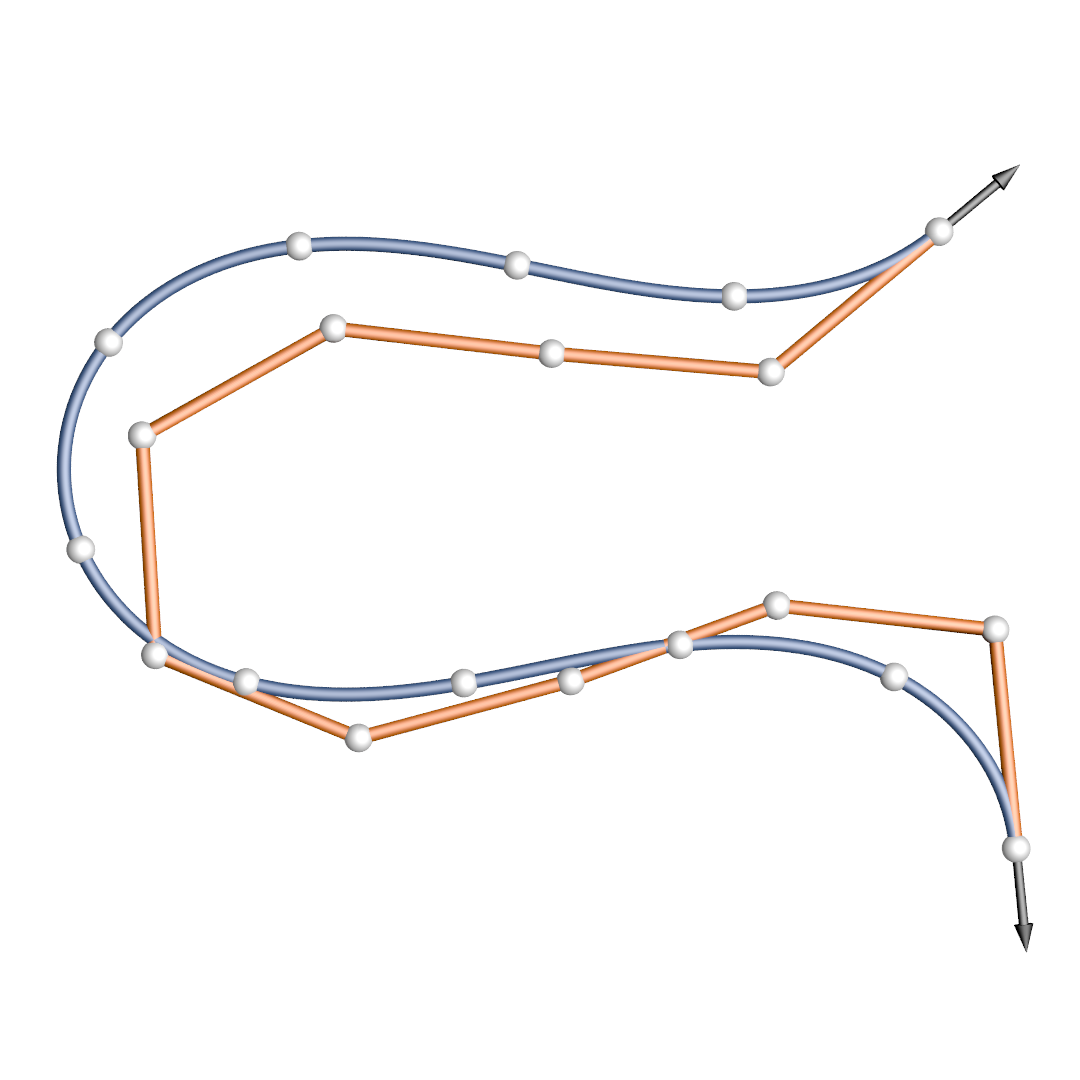}{c}
\end{center}
\caption{
Smooth curve $\Curve$ (blue) together with (a) sampling points (white) and  inscribed polygon $\OtherPolygon$,
(b) polygon $\Polygon = \SamplingApprox(\Curve)$, obtained from $\OtherPolygon$ by stretching the edges to their desired length, and
(c) the final polygon $\Sampling(\Curve) = \DiscAdjust(\Polygon)$. 
}
\label{fig:SamplingOperator}
\end{figure}

\subsection{Approximate Sampling Operator}\label{sec:ApproximateSampling}

\begin{bproposition}[Approximate Sampling Operator]\label{prop:ApproximateSamplingTheorem}
There are constants $\MaxRadius_0>0$ and $C \geq 0$ such that for each partition $\Triangulation$ of $\Interval$ with $\MaxRadius(\Triangulation) \leq \MaxRadius_0$, 
there is an \emph{approximate sampling operator} $\SamplingApprox \colon \Priors \to \DiscConfSpace$ with the following properties
for each $\Curve \in \Priors$ and $\Polygon = \SamplingApprox(\Curve)$:
\begin{enumerate}
\item \label{item:ASConsistency} Energy consistency: 
	$\nabs{\DiscEulerBernoulli(\Polygon) - \EulerBernoulli(\Curve)} \leq C\, \MaxRadius(\Triangulation)$.	
\item \label{item:ASProximity} $\Sobo[1,\infty]$-proximity: 
	$\nnorm{ \DiscTestMap (\Polygon) - \Curve }_{\Sobo[1,\infty]} \leq C\, \MaxRadius(\Triangulation)$, where $\DiscTestMap$ is the piecewise affine interpolation operator from \eqref{eq:piecewiselinear}.
\item \label{item:ASStrain} 
	Strain consistency: $\LogStrain_\Polygon = 0$.
	\item \label{item:ASFeasibility} 
	Approximate feasibility: 
	$\nnorm{\DiscConstraintMap(\Polygon)}_{\tv[2]} \leq 
	 C \, \MaxRadius(\Triangulation)$.	
	\item \label{item:ASCurvatureConsistency} 
	Curvature consistency:\footnote{Recall that $\DualEdge(\Vertex)$ denotes the dual edge of an interior vertex $\Vertex \in \IVertices(\Triangulation)$ and that $\shiftr{0}$ and $\shiftl{L}$ denote the two boundary edges. See \autoref{sec:DiscreteSetting} for details.}
	\begin{enumerate}
	\item \label{item:ASCurvatureConsistencyInterior} 
	$\sup_{r \in \DualEdge(\Vertex)} \nabs{\Curvature_\Polygon(\Vertex) - \Curvature_{\Curve}(r)}
	\leq C \, \MaxRadius(\Triangulation)$
	\quad for each $\Vertex\in \IVertices(\Triangulation)$.
		\item 
		\label{item:ASCurvatureConsistencyBoundary} 
		$\sup_{r \in \shiftr{0}} \nabs{\Curvature_\Polygon(\shiftrr{0}) - \Curvature_{\Curve}(r)}
	\leq C \, \MaxRadius(\Triangulation)
		\qand
		\sup_{r \in \shiftl{L}} \nabs{\Curvature_\Polygon(\shiftll{L}) - \Curvature_{\Curve}(r)}
	\leq C \, \MaxRadius(\Triangulation)$.	
	\end{enumerate}	 
\item \label{item:ASCompactness} Finite total curvature rate: 
	$\nseminorm{\Polygon}_{\tv[3]} = \nseminorm{\Polygon}_{\tv[3][\Polygon]} \leq C$.	
\end{enumerate}
\end{bproposition}
\begin{proof}
Fix $\Curve \in \Priors$. As an intermediate step towards $\SamplingApprox$, we define the polygon $\OtherPolygon$ by pointwise sampling, i.e., by
$\OtherPolygon(\Vertex) \ceq \Curve(\Vertex)$ for each vertex $\Vertex \in \Vertices(\Triangulation)$ (see \autoref{fig:SamplingOperator} (a)).
Observe that the logarithmic strain $\LogStrain_{\OtherPolygon}$ will  not vanish in general since the length of a secant inscribed into $\Curve$ is shorter than the length of the respective arc of $\Curve$.
\autoref{lem:LengthDistortion} in \autoref{sec:SamplingEstimates} below shows that $\nnorm{\LogStrain_{\OtherPolygon}}_{\ell^\infty}$ is of order $\MaxRadius(\Triangulation)^2$.
However, $\nnorm{\LogStrain_{\OtherPolygon}}_{\tv[2]}$ is of order $\MaxRadius(\Triangulation)^0$, so one cannot expect that the constraint violation $\nnorm{\DiscConstraintMap(\OtherPolygon)}_{\tv[2]}$ is bounded by $C \, \MaxRadius(\Triangulation)$. Notice that the latter will become crucial when we attempt to restore feasibility by a
\emph{small perturbation in the norm $\nnorm{\cdot}_{\tv[3]}$}.
We therefore modify $\OtherPolygon$ in order to obtain the desired polygon $\Polygon$.
But for the moment, we observe that $\OtherPolygon$ satisfies
\begin{align*}
	\nnorm{\OtherPolygon |_{\partial \Interval}-\DCond} = 0,
	\:
	\nnorm{\Tangent_{\OtherPolygon}(\shiftr{0}) - \NCond(0) },
	\:
	\nnorm{\Tangent_{\OtherPolygon}(\shiftl{L}) - \NCond(L) }
	\leq C\, \MaxRadius(\Triangulation),	
	\: \text{and} \:
	\nnorm{ \DiscTestMap (\OtherPolygon) - \Curve }_{\Sobo[1,\infty]} \leq C\, \MaxRadius(\Triangulation).
\end{align*}
Moreover, \autoref{lem:SecondDerivativeEstimate} in \autoref{sec:SamplingEstimates} applied to the functions $f = \Curve$ and $F =\OtherPolygon$ shows
\begin{align}
	\nabs{\Curvature_{\OtherPolygon}(\Vertex) -  \Curvature_\Curve(\Vertex)}
	=
	\nabs{\DiscD^2 \OtherPolygon(\Vertex) -  \Dtot^2 \Curve(\Vertex)}
	\leq C\, \DualReferenceEdgeLengths(\Vertex).
	\label{eq:QCurvatureConsistency}
\end{align}

We now construct a further polygon $\SamplingApprox(\Curve) = \Polygon$ by stretching each edge vector of $\OtherPolygon$ such that $\EdgeLengths_\Polygon = \ReferenceEdgeLengths$ (see \autoref{fig:SamplingOperator} (b)). 
More precisely, we define $\Polygon$ recursively by
\begin{align*}
	\Polygon(0) = \OtherPolygon(0),
	\qand
	\Polygon(\shiftrr{\Vertex}) 
	\ceq 
	\Polygon(\Vertex) + \tfrac{\ReferenceEdgeLengths(\shiftr{\Vertex})}{\EdgeLengths_\OtherPolygon(\shiftr{\Vertex})} \nparen{\OtherPolygon(\shiftrr{\Vertex}) - \OtherPolygon(\Vertex)}.
\end{align*}
By construction, we have $\LogStrain_\Polygon = 0$, hence Statement~\ref{item:ASStrain}. Since $\Curve$ is parameterized by arc length (since $\LogStrain_\Curve =0$), the length of a secant inscribed into $\Curve$ differs from the length of the respective arc of $\Curve$ by $C\, \MaxRadius(\Triangulation)^3$; more precisely we have 
\begin{align}\label{eq:RelocationError}
	\nabs{\ReferenceEdgeLengths(\shiftr{\Vertex})-{\EdgeLengths_\OtherPolygon(\shiftr{\Vertex})}}	
	 	\leq C \, \ReferenceEdgeLengths(\shiftr{\Vertex})^3 ,
\end{align}
which implies that
\begin{align*}
	\nnorm{\tfrac{\ReferenceEdgeLengths(\shiftr{\Vertex})}{\EdgeLengths_\OtherPolygon(\shiftr{\Vertex})} \nparen{\OtherPolygon(\shiftrr{\Vertex}) - \OtherPolygon(\Vertex)}
	-
	\nparen{\OtherPolygon(\shiftrr{\Vertex}) - \OtherPolygon(\Vertex)}
	}
	=
	\nabs{\tfrac{\ReferenceEdgeLengths(\shiftr{\Vertex})}{\EdgeLengths_\OtherPolygon(\shiftr{\Vertex})} -1}
	\,
	\nnorm{\OtherPolygon(\shiftrr{\Vertex}) - \OtherPolygon(\Vertex)}
 	\leq C \, \ReferenceEdgeLengths(\shiftr{\Vertex})^3 .
\end{align*}
Therefore, since the number of vertices in the partition is of order $1/\MaxRadius(\Triangulation)$, each point $\OtherPolygon(\Vertex)$ is relocated to $\Polygon(\Vertex)$ by a shift of magnitude $C\, \MaxRadius(\Triangulation)^2$.
This implies $\nnorm{\DiscTestMap(\Polygon) - \Curve}_{L^\infty} \leq C \, \MaxRadius(\Triangulation)^2$ and
$\nnorm{\Polygon|_{\partial \Interval}-\DCond} \leq C \, \MaxRadius(\Triangulation)^2$.
For an edge $\Edge \in \Edges(\Triangulation)$ and a point $r \in \Edge$, we have
$
	\DiscTestMap(\Polygon)(r)' = \tfrac{\ReferenceEdgeLengths(\shiftr{\Vertex})}{\EdgeLengths_\OtherPolygon(\shiftr{\Vertex})} \, \DiscTestMap(\OtherPolygon)(r)'
$, 
which yields 
\begin{align*}
	\nnorm{\DiscTestMap(\Polygon)' - \Curve'}_{L^\infty}
	\leq
	\nnorm{\DiscTestMap(\Polygon)' - \DiscTestMap(\OtherPolygon)'}_{L^\infty}
	+
	\nnorm{\DiscTestMap(\OtherPolygon)' - \Curve'}_{L^\infty}
	\leq C \, \MaxRadius(\Triangulation) ,
\end{align*}
and thus Statement~\ref{item:ASProximity}.
Moreover, we have $\Tangent_\Polygon = \Tangent_\OtherPolygon$,
thus $\nnorm{\Tangent_\Polygon|_{\partial \Interval} - \NCond} \leq C \, \MaxRadius(\Triangulation)$.
Along with $\LogStrain_\Polygon=0$, we obtain Statement~\ref{item:ASFeasibility}.

A further consequence of $\Tangent_\Polygon = \Tangent_\OtherPolygon$ is the identity
$
	\Curvature_\Polygon = \frac{\DualEdgeLengths_\OtherPolygon}{\DualReferenceEdgeLengths} \Curvature_\OtherPolygon
$.
From \eqref{eq:RelocationError} we obtain 
\begin{align*}
	\abs{
		1 - \tfrac{\DualEdgeLengths_\OtherPolygon(\Vertex)}{\DualReferenceEdgeLengths(\Vertex)}
	}
	&= 
	\abs{
	\tfrac{
	\ReferenceEdgeLengths(\shiftr{\Vertex}) - \EdgeLengths_\OtherPolygon(\shiftr{\Vertex})
	}{
		\ReferenceEdgeLengths(\shiftr{\Vertex})
		+
		\ReferenceEdgeLengths(\shiftl{\Vertex})
	}
	+
	\tfrac{
	\ReferenceEdgeLengths(\shiftl{\Vertex}) - \EdgeLengths_\OtherPolygon(\shiftl{\Vertex})
	}{
		\ReferenceEdgeLengths(\shiftr{\Vertex})
		+
		\ReferenceEdgeLengths(\shiftl{\Vertex})
	}
	}	
	\leq
\tfrac{
	C \, \ReferenceEdgeLengths(\shiftr{\Vertex})^3
	}{
		\ReferenceEdgeLengths(\shiftr{\Vertex})
		+
		\ReferenceEdgeLengths(\shiftl{\Vertex})
	}
	+
	\tfrac{
	C \, \ReferenceEdgeLengths(\shiftl{\Vertex})^3
	}{
		\ReferenceEdgeLengths(\shiftr{\Vertex})
		+
		\ReferenceEdgeLengths(\shiftl{\Vertex})
	}	
	\leq C \, \DualReferenceEdgeLengths(\Vertex)^2,
\end{align*}
which together with \eqref{eq:QCurvatureConsistency} leads to
\begin{align}
	\nabs{\Curvature_\Polygon(\Vertex) - \Curvature_\Curve(\Vertex)} \leq C \, \DualReferenceEdgeLengths(\Vertex).
	\label{eq:PCurvatureConsistency}
\end{align}
Now, Lipschitz continuity of $\Curvature_\Curve$ implies Statement~\ref{item:ASCurvatureConsistencyInterior}; Statement~\ref{item:ASCurvatureConsistencyBoundary} can be shown analogously.

Lipschitz continuity of $\nabs{\Curvature_\Curve}^2$ implies that the integral
$\EulerBernoulli(\Curve) = \tfrac{1}{2} \int_\Interval \nabs{\Curvature_\Curve}^2 \, \LineElementC$ can be approximated up to an error of order $\MaxRadius(\Triangulation)$ by piecewise constant interpolation of $\nabs{\Curvature_\Curve}^2$ on dual edges, even if we neglect the dual edges of the boundary points.
Along with \eqref{eq:PCurvatureConsistency}, we obtain
$
	\nabs{
		\textstyle
		\EulerBernoulli(\Curve) 
		- 
		\frac{1}{2}\sum_{\Vertex \in \IVertices(\Triangulation)} \nabs{\Curvature_\Polygon(\Vertex)}^2 \,
		\DualEdgeLengths_\Polygon(\Vertex)
	}
	\leq C \, \MaxRadius(\Triangulation).
$
In this sum, the square of the discrete curvature $\nabs{\Curvature_\Polygon(\Vertex)}^2$ appears where we would like to have squared (rescaled) turning angles $(\TurningAngles_\Polygon(\Vertex)/\DualEdgeLengths_\Polygon(\Vertex))^2$.
But we have $\TurningAngles_\Polygon = \TurningAngles_\OtherPolygon$ and since $\OtherPolygon$ is inscribed into $\Curve$, we have $\TurningAngles_\Polygon = \TurningAngles_\OtherPolygon \leq C \, \EdgeLengths_\OtherPolygon \leq \tilde C \, \EdgeLengths_\Polygon$.
For small angles, we may estimate
\begin{align*}
	\nabs{
		\nabs{ \Curvature_\Polygon}^2 - (\tfrac{\TurningAngles_\Polygon}{\DualEdgeLengths_\Polygon})^2
	}
	&=
	\nabs{
		\nabs{ \DiscD \Tangent_\Polygon}^2 - (\tfrac{\TurningAngles_\Polygon}{\DualEdgeLengths_\Polygon})^2
	}
	= \DualEdgeLengths_\Polygon^{-2} \, \nabs{
		\nabs{ \shiftr{\Tangent_\Polygon}- \shiftl{\Tangent_\Polygon} }^2 - \TurningAngles_\Polygon^2
	}
	\\
	&= \DualEdgeLengths_\Polygon^{-2} \, \nabs{
		(2 \sin(\TurningAngles_\Polygon/2))^2
		- 
		\TurningAngles_\Polygon^2
	}
	\leq C\, \DualEdgeLengths_\Polygon^{-2} \, \TurningAngles_\Polygon^4
	\leq C\, \MaxRadius(\Triangulation)^2,
\end{align*}
which allows us to deduce Statement~\ref{item:ASConsistency}.
Finally,
Lipschitz continuity of $\Curvature_\Curve$ and \eqref{eq:PCurvatureConsistency} imply that
\begin{align*}
	\seminorm{\Polygon}_{\tv[3][\Polygon]}
	&= \textstyle
	\sum_{\Edge \in \IEdges(\Triangulation)}
	\nabs{\DiscD \Curvature_\Polygon(\Edge)} \, \EdgeLengths_\Polygon(\Edge)
	= \sum_{\Edge \in \IEdges(\Triangulation)}
	\nabs{\Curvature_\Polygon(\shiftr{\Edge}) - \Curvature_\Polygon(\shiftl{\Edge})}
	\\
	&\leq  \textstyle
	\sum_{\Edge \in \IEdges(\Triangulation)}
	\nabs{\Curvature_\Curve(\shiftr{\Edge}) - \Curvature_\Curve(\shiftl{\Edge})}
	+
	\sum_{\Edge \in \IEdges(\Triangulation)}
	\bigparen{
		\nabs{\Curvature_\Polygon(\shiftr{\Edge}) -\Curvature_\Curve(\shiftr{\Edge})}
		+
		\nabs{\Curvature_\Polygon(\shiftl{\Edge}) -\Curvature_\Curve(\shiftl{\Edge})}		
	}
	\\
	&\leq  \textstyle
	\sum_{\Edge \in \IEdges(\Triangulation)}
	C \, \ReferenceEdgeLengths(\Edge)
	+
	\sum_{\Edge \in \IEdges(\Triangulation)}
	\bigparen{
		C\, \DualReferenceEdgeLengths(\shiftr{\Edge})
		+
		C\, \DualReferenceEdgeLengths(\shiftl{\Edge})
	}	
	\leq 3\, C\, L ,
\end{align*}
which shows Statement~\ref{item:ASCompactness}.
\end{proof}

\subsection{Discrete Restoration Operator}\label{sec:DiscreteRestorationOperator}

In this section, we prove a discrete version of \autoref{prop:RestorationOperator}, stating that
small constraint violations of sufficiently tame polygons can be repaired by perturbations of comparable size. In particular, we have to show that the bending energy is increased only insignificantly.
With $\varSigma = \intervalcc{0,L}$, we rewrite the constraint mapping $\DiscConstraintMap$ as
\begin{gather*}
	\DiscConstraintMap \colon \DiscConfSpace \to 
	\DiscTargetSpace \ceq 
	\AmbSpace \times \AmbSpace \times \Sphere \times \Sphere \times \Map(\Edges(\Triangulation);\R),
	\\
	\DiscConstraintMap(\Polygon)
	= \bigparen{
		\Polygon(0) - \DCond(0),
		\Polygon(L) - \DCond(L),
		\Tangent_\Polygon(\shiftr{0}) - \NCond(0),
		\Tangent_\Polygon(\shiftl{L}) - \DCond(L),
		\LogStrain_\Polygon
	}
.
\end{gather*}

Recall the definition of $\DiscTame^\infty(\StrainBound, \EnergyBound,\eta)$ from \eqref{eq:DiscreteThetaConditions}.
We now define the set
\begin{align}
	\DiscRestorationPriors \ceq 
		\set{\Polygon \in \DiscTame^\infty(\StrainBound,\EnergyBound,\eta)
		| 
			\nseminorm{\LogStrain_\Polygon}_{\tv[2][\Polygon]}			
			\leq \StrainBound, \, 
			\seminorm{\Polygon}_{\tv[3][\Polygon]}			
			\leq \EnergyBound 
	}.
	\label{eq:DiscRestorationPriors}
\end{align}

\begin{bproposition}[Discrete Restoration Operator]\label{prop:DiscreteRestorationOperator}
There exist $C > 0$, $\varepsilon > 0$, 
and a \emph{restoration operator}
\begin{align*}
		\DiscAdjust \colon 
		\set{ 
		\Polygon \in \DiscRestorationPriors | \nnorm{\DiscConstraintMap(\Polygon)}_{\tv[2]} \leq \varepsilon
	} \to \DiscFeasible
\end{align*}
with the following properties:
\begin{enumerate}
	\item Proximity:\label{item:DiscreteRestorationProximity} 
	$\nnorm{\DiscAdjust(\Polygon)- \Polygon}_{\tv[3]}
	\leq C \, \nnorm{\DiscConstraintMap(\Polygon)}_{\tv[2]}$.
	\item Energy consistency:\label{item:DiscreteRestorationConsistency} 
	$\nabs{(\DiscEulerBernoulli \circ \DiscAdjust)(\Polygon) - \DiscEulerBernoulli(\Polygon)} \leq C \, \nnorm{\DiscConstraintMap(\Polygon)}_{\tv[2]}$.
\end{enumerate}
\end{bproposition}
\begin{proof}
We augment the system $\DiscConstraintMap(\Polygon) = 0$ to a system $F_\SubscriptTriangulation(\Polygon,z_0,z_1) = 0$ precisely as in \autoref{prop:RestorationOperator}.
By successive application of the discrete product rule
\begin{align*}
	\cD_\Polygon (\varphi \, \psi)
	&=
	(\cD_\Polygon \varphi) \, \shiftl{\psi} + \shiftr{\varphi} \, (\cD_\Polygon \psi)
	=
	(\cD_\Polygon \varphi) \, \shiftr{\psi} + \shiftl{\varphi} \, (\cD_\Polygon \psi),	
\end{align*}
one can show that $D \DiscConstraintMap$ and its right inverse $B_\Polygon$ constructed in 
\autoref{lem:DiscreteRightinverseDPhi}
satisfy
\begin{gather*}
	\nnorm{D \DiscConstraintMap(\Polygon) \, u}_{\tv[2]} \leq C\, \nnorm{u}_{\tv[3]},
	\quad
	\nnorm{D^2 \DiscConstraintMap(\Polygon) \, (u ,v)}_{\tv[2]} \leq C\, \nnorm{u}_{\tv[3]}\,\nnorm{v}_{\tv[3]},
	\qand
	\nnorm{B_\Polygon \, w}_{\bv[3]} \leq C \, \nnorm{w}_{\bv[2]}
\end{gather*}
for all $\Polygon \in \DiscRestorationPriors$ (compare also to \autoref{lem:SmoothDDPhiBound}).
This
guarantees that the Newton--Kantorovich theorem (see \autoref{lem:KantorovichVarian} in \autoref{sec:NewtonKantotovich}) can be applied to the starting value $(\Polygon,0,0)$
with
$\Polygon \in \set{ 
		\Polygon \in \DiscRestorationPriors | \nnorm{\DiscConstraintMap(\Polygon)}_{\tv[2]} \leq \varepsilon
	}$ 
to obtain a polygon $\DiscAdjust(\Polygon)$ that satisfies $F_\Triangulation(\DiscAdjust(\Polygon),0,0) = (0,0,0)$ and hence $\DiscConstraintMap(\DiscAdjust(\Polygon)) = 0$.
Also due to \autoref{lem:KantorovichVarian}, $\DiscAdjust(\Polygon)$ satisfies
$
	\nnorm{\DiscAdjust(\Polygon)-\Polygon}_{\tv[3]}
	\leq 
	C \, \nnorm{\DiscConstraintMap(\Polygon)}_{\tv[2]}
	.
$
This shows feasibility $\Adjust(\Polygon) \in \DiscFeasible$ and Statement~\ref{item:DiscreteRestorationProximity}.
Finally, Statement~\ref{item:DiscreteRestorationConsistency} follows from Lipschitz continuity of $\DiscEulerBernoulli$ on $\DiscTame^\infty(\StrainBound,\EnergyBound,\eta)$, a fact that can be shown in a very similar way as in the smooth case (compare to \autoref{lem:EulerBernoulliisLipschitz} in \autoref{sec:EulerBernoulliisLipschitz}).
\end{proof}

\subsection{Sampling Operator}

Combining approximate sampling with discrete restoration yields our final sampling operator  $\Sampling= \DiscAdjust\circ \SamplingApprox$. Its properties, summarized below, follow immediately from \autoref{prop:ApproximateSamplingTheorem} and \autoref{prop:DiscreteRestorationOperator} after realizing that the norm $\nnorm{\cdot}_{\tv[3]}$ dominates the norm $\nnorm{\cdot}_{\sobo[2,\infty]}$.

\begin{btheorem}[Sampling Operator]\label{theo:SamplingTheorem}
There are constants $\MaxRadius_0>0$ and $C \geq 0$ such that for each partition $\Triangulation$ of $\Interval$ with $\MaxRadius(\Triangulation) \leq \MaxRadius_0$, 
there exists a sampling operator $\Sampling \colon \Priors \to \DiscFeasible$ 
with the following properties:
\begin{enumerate}
\item Energy consistency: \label{item:SConsistency}
$\nabs{(\DiscEulerBernoulli \circ \Sampling)(\Curve) - \EulerBernoulli (\Curve)} \leq C\, \MaxRadius(\Triangulation) $.
\item $\Sobo[1,\infty]$-Proximity: \label{item:SProximity}
$\nnorm{ (\DiscTestMap \circ \Sampling)(\Curve) - \Curve }_{\Sobo[1,\infty]} \leq C\, \MaxRadius(\Triangulation)$.
	\item 
	Curvature consistency: \label{item:SCurvatureConsistency} 
	\begin{enumerate}
	\item \label{item:SCurvatureConsistencyInterior} 
	$\sup_{r \in \DualEdge(\Vertex)} \nabs{\Curvature_{\Sampling(\Curve)}(\Vertex) - \Curvature_{\Curve}(r)}
	\leq C \, \MaxRadius(\Triangulation)$
	\quad for each $\Vertex\in \IVertices(\Triangulation)$.
	\item \label{item:SCurvatureConsistencyBoundary} 
	$\sup_{r \in \shiftr{0}} \nabs{\Curvature_{\Sampling(\Curve)}(\shiftrr{0}) - \Curvature_{\Curve}(r)}
	\leq C \, \MaxRadius(\Triangulation)
		\qand
		\sup_{r \in \shiftl{L}} \nabs{\Curvature_{\Sampling(\Curve)}(\shiftll{L}) - \Curvature_{\Curve}(r)}
	\leq C \, \MaxRadius(\Triangulation)$.	
	\end{enumerate}	 
\item $\Sampling(\Priors) \subset \DiscPriors$ for appropriately chosen constant $\EnergyBound_2 \geq 0$ in the definition \eqref{eq:DiscretePriors} of $\DiscPriors$.
\end{enumerate}
\end{btheorem}

Combining the curvature consistency statements in \autoref{theo:ReconstructionTheorem} and \autoref{theo:SamplingTheorem} leads to the following final consequence:

\begin{theorem}[$W^{2,\infty}$-Proximity]\label{cor:Loop}
There exists a $\MaxRadius_0 >0$ and a $C \geq 0$ such that for each partition~$\Triangulation$ with $\MaxRadius(\Triangulation) \leq \MaxRadius_0$ and for each curve $\Curve \in \Priors$,
we have
\begin{align*}
	\nnorm{
		\Curve
		-
		(\Reconstruction \circ \Sampling)(\Curve)
	}_{\Sobo[2,\infty]} \leq C \, \MaxRadius(\Triangulation).
\end{align*}
\end{theorem}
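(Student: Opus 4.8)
The plan is to obtain the estimate by chaining the curvature-consistency statements of \autoref{theo:ReconstructionTheorem} and \autoref{theo:SamplingTheorem} through the intermediate polygon. Fix $\Curve \in \Priors$ and restrict to partitions $\Triangulation$ with $\MaxRadius(\Triangulation)$ small enough that both theorems apply. Put $\Polygon \ceq \Sampling(\Curve)$; the inclusion $\Sampling(\Priors) \subset \DiscPriors$ from \autoref{theo:SamplingTheorem} ensures that $\OtherCurve \ceq \Reconstruction(\Polygon) = (\Reconstruction \circ \Sampling)(\Curve) \in \Feasible$ is well-defined. Since the $\Sobo[2,\infty]$-norm is comparable to $\nnorm{\cdot}_{\Sobo[1,\infty]} + \nnorm{(\tfrac{\dd}{\dd t})^2\,\cdot}_{L^\infty}$, it suffices to bound these two contributions by $C\,\MaxRadius(\Triangulation)$ separately.

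For the first-order contribution I would insert the piecewise affine interpolant $\DiscTestMap(\Polygon)$ and use the triangle inequality: by Statement~\ref{item:SProximity} of \autoref{theo:SamplingTheorem} we have $\nnorm{\DiscTestMap(\Polygon) - \Curve}_{\Sobo[1,\infty]} \leq C\,\MaxRadius(\Triangulation)$, and by Statement~\ref{item:RProximity} of \autoref{theo:ReconstructionTheorem} we have $\nnorm{\OtherCurve - \DiscTestMap(\Polygon)}_{\Sobo[1,\infty]} \leq C\,\MaxRadius(\Triangulation)$, whence $\nnorm{\Curve - \OtherCurve}_{\Sobo[1,\infty]} \leq C\,\MaxRadius(\Triangulation)$.

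For the second-order contribution, note that $\Curve$ and $\OtherCurve$ both lie in $\Feasible$, hence $\LogStrain_\Curve = \LogStrain_{\OtherCurve} = 0$, so that both curves are parameterized by arc length and $\Curve'' = \Laplacian_\Curve \Curve = \CurvatureC$, $\OtherCurve'' = \Laplacian_{\OtherCurve}\OtherCurve = \Curvature_{\OtherCurve}$. It thus remains to prove $\nnorm{\CurvatureC - \Curvature_{\OtherCurve}}_{L^\infty} \leq C\,\MaxRadius(\Triangulation)$. Every $r \in \Interval$ lies in the dual edge $\DualEdge(\Vertex)$ of some interior vertex $\Vertex \in \IVertices(\Triangulation)$ or in one of the boundary edges $\shiftr{0}$, $\shiftl{L}$, since these sets cover $\Interval$. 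In the interior case, combining Statement~\ref{item:SCurvatureConsistencyInterior} of \autoref{theo:SamplingTheorem} (applied to $\Curve$ and $\Polygon = \Sampling(\Curve)$) with Statement~\ref{item:RCurvatureConsistencyInterior} of \autoref{theo:ReconstructionTheorem} (applied to $\Polygon$ and $\OtherCurve = \Reconstruction(\Polygon)$) gives
\[
	\nabs{\CurvatureC(r) - \Curvature_{\OtherCurve}(r)}
	\leq \nabs{\CurvatureC(r) - \Curvature_\Polygon(\Vertex)}
	+ \nabs{\Curvature_\Polygon(\Vertex) - \Curvature_{\OtherCurve}(r)}
	\leq C\,\MaxRadius(\Triangulation),
\]
and the two boundary edges are treated in exactly the same way via Statements~\ref{item:SCurvatureConsistencyBoundary} and \ref{item:RCurvatureConsistencyBoundary}. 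Taking the supremum over $r \in \Interval$ and adding the first-order estimate yields the claim.

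The argument is essentially bookkeeping: the key point is that the two curvature-consistency estimates are anchored to the \emph{same} discrete curvature $\Curvature_\Polygon$ of the intermediate polygon $\Polygon = \Sampling(\Curve)$ (to which $\Reconstruction$ is then applied), and that the dual edges of the interior vertices together with the two boundary edges exhaust $\Interval$. I do not expect a genuine analytic obstacle here, since the substantive estimates have already been proved in \autoref{theo:ReconstructionTheorem} and \autoref{theo:SamplingTheorem}; the only care needed is to keep the roles of $\Curve$, $\Polygon$, and $\OtherCurve$ straight and to handle the boundary edges consistently with the interior ones.
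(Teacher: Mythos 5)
Your proposal is correct and follows exactly the route the paper intends: the paper states \autoref{cor:Loop} as the immediate consequence of combining the curvature-consistency statements of \autoref{theo:ReconstructionTheorem} and \autoref{theo:SamplingTheorem} (anchored at the common discrete curvature $\Curvature_{\Sampling(\Curve)}$), together with the two $\Sobo[1,\infty]$-proximity statements, which is precisely your bookkeeping argument.
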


\clearpage
\appendix

\section{Norm Equivalences}\label{sec:NormEquivalences}

\begin{proof}[of \autoref{lem:NormEquivalences}]
For simplicity, put $\sigma \ceq \nnorm{\LogStrain_\Curve}_{L^\infty}$
and suppose that $u \colon \Interval \to \AmbSpace$ is a sufficiently smooth function. 
Let us start with the estimates for the Sobolev norms.
We discuss only the case $1 \leq p < \infty$, but the case $p = \infty$ can be shown analogously.

\textbf{Case $k=0.$} We have by definition that $\LineElementC = \nabs{\Curve'} \, \dd t$, hence
$\dd t  \leq \ee^{\sigma} \, \LineElementC$ and $\LineElementC  \leq \ee^\sigma \, \dd t$, showing that $\nnorm{u}_{L^p} \leq \ee^{\sigma/p} \, \nnorm{u}_{L^p_\Curve}$ and $\nnorm{u}_{L^p_\Curve} \leq \ee^{\sigma/p} \, \nnorm{u}_{L^p}$ hold true.

\textbf{Case $k=1.$}
By the very definitions of $\Dtot$ and $\LineElementC$, we have
$\nabs{\Dtot \,u}^p \, \LineElementC =\ee^{(1-p) \, \LogStrain_\Curve} \, \nabs{u'}^p \, \dd t$
and
$\nabs{u'}^p \, \dd t  = \ee^{(p-1) \, \LogStrain_\Curve} \, \nabs{\Dtot \,u}^p \, \LineElementC$.
Thus, $\ee^{-\sigma} \leq \nabs{\Curve'} \leq \ee^{\sigma}$ implies
\begin{align*}
	\nnorm{\Dtot \,u}_{L^p_\Curve} \leq \ee^{(1-1/p)\sigma} \nnorm{u'}_{L^p} 
	\qand 
	\nnorm{u'}_{L^p} \leq \ee^{(1-1/p)\sigma} \nnorm{\Dtot \,u}_{L^p_\Curve}.
\end{align*}

\textbf{Case $k=2.$}
Because of $	\Dtot \, u =  \ee^{-\LogStrain_\Curve} \, u'$ and $\LineElementC = \ee^{\LogStrain_\Curve} \, \dd t$, 
the product rule implies
\begin{align*}
	\Dtot \Dtot \, u = \ee^{-2 \, \LogStrain_\Curve} \, \bigparen{u'' - {\LogStrain_\Curve}' \, u'}
	\qand
	u''  = \ee^{2 \, \LogStrain_\Curve} \bigparen{ \Dtot \Dtot \, u + \Dtot \LogStrain_\Curve \, \Dtot u}.
\end{align*}
Thus, we obtain
\begin{align*}
	\nnorm{\Dtot \Dtot \, u}_{L^p_\Curve}
	&\leq \ee^{(2 - 1/p)\, \sigma} \, \bigparen{ \nnorm{u''}_{L^p} + \nnorm{{\LogStrain_\Curve}'}_{L^p} \, \nnorm{u'}_{L^\infty}},
	\qand
	\\
	\nnorm{u''}_{L^p}
	&\leq \ee^{(2- 1/p) \, \sigma} \bigparen{ 
		\nnorm{\Dtot \Dtot u}_{L^p_\Curve} 
		+ 
		\nnorm{\Dtot \LogStrain_\Curve}_{L^p_\Curve} \, \nnorm{\Dtot u}_{L^\infty_\Curve}
	}.
\end{align*}
The analogous statements for general $k >2$ can be obtained by applying the product rule successively.
Finally, the analogous statements for the total variation norms can be derived in a similar way from the product rule $\nnorm{\varphi \, \psi}_{\TV[1]} \leq C \, \nnorm{\varphi}_{\TV[1]}\, \nnorm{\psi}_{\TV[1]}$ and from the Lipschitz-continuity of the exponential function on the compact set $\intervalcc{-\StrainBound,\StrainBound}$.
\end{proof}

\section{Local Lipschitz Continuity of $\EulerBernoulli$}\label{sec:EulerBernoulliisLipschitz}

We need the Lipschitz continuity of the Euler-Bernoulli energy on controlled sets at various places, e.g., in \autoref{sec:SmoothRegularity}, \autoref{prop:RestorationOperator} in \autoref{sec:RestorationOperator}, and, in a discretized version, also in \autoref{prop:DiscreteRestorationOperator} in \autoref{sec:DiscreteRestorationOperator}.

\begin{lemma}\label{lem:EulerBernoulliisLipschitz}
For $p \geq 2$, the bending energy is (uniformly) Lipschitz continuous on $\Tame^{2,p}(\StrainBound,\EnergyBound,\eta)$ with respect to $\nnorm{\cdot}_{\Sobo[2,p]}$.\footnote{See \eqref{eq:ThetaConditions} for the definition of $\Tame^{2,p}$.}
\end{lemma}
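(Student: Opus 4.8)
The plan is to write the bending energy in the $\Curve$-dependent formulation, namely $\EulerBernoulli(\Curve) = \tfrac12 \int_\Interval \nabs{\Laplacian_\Curve \Curve}^2 \, \LineElementC$, and to estimate the difference $\EulerBernoulli(\Curve_1) - \EulerBernoulli(\Curve_0)$ for $\Curve_0, \Curve_1 \in \Tame^{2,p}(\StrainBound,\EnergyBound,\eta)$ using the fundamental theorem of calculus along the segment $\Curve_s \ceq (1-s)\Curve_0 + s\Curve_1$. The first thing to check is that this segment stays inside a slightly enlarged tame set: since $\nnorm{\LogStrain_{\Curve_i}}_{L^\infty} \leq \StrainBound$ and $\Sobo[2,p] \hookrightarrow \Sobo[1,\infty]$, for $\nnorm{\Curve_1 - \Curve_0}_{\Sobo[2,p]}$ small the logarithmic strains of $\Curve_s$ stay uniformly bounded, $\nabs{\Curve_s'}$ stays bounded away from $0$ and $\infty$, and the commensurability condition $\Length(\Curve_s) \geq (1+\eta')\nabs{\Curve_s(L)-\Curve_s(0)}$ persists with a slightly smaller $\eta'$; all the $\Curve$-dependent quantities below are therefore controlled uniformly in $s$. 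For the global (as opposed to local) Lipschitz statement one covers $\Tame^{2,p}(\StrainBound,\EnergyBound,\eta)$ by finitely many such segments, or simply observes that the energy is bounded on the tame set and the derivative bound below is uniform, so a standard connectedness/chaining argument upgrades local to uniform Lipschitz continuity.

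Next I would compute the Gâteaux derivative $D\EulerBernoulli(\Curve)\,u$ and bound its operator norm. Writing everything in terms of $\Dtot = \nabs{\Curve'}^{-1}\frac{\dd}{\dd t}$ and $\LineElementC = \ee^{\LogStrain_\Curve}\,\dd t$, the variation of $\EulerBernoulli$ produces terms of the schematic form $\int \ninnerprod{\Laplacian_\Curve\Curve,\, \Dtot^2 u} \,\LineElementC$ plus lower-order terms coming from varying the line element and the metric coefficients $\nabs{\Curve'}^{-1}$ (these involve $\ninnerprod{\Tangent_\Curve, \Dtot u}$ and factors of $\Dtot\LogStrain_\Curve$, $\kappa_\Curve$, etc.). Each factor is estimated by Hölder's inequality: the top-order factor $\Dtot^2 u$ lies in $L^p$, the curvature $\kappa_\Curve = \Laplacian_\Curve\Curve$ lies in $L^p$ (this is exactly where $\Curve \in \ConfSpace = \ImmC[2,p]$ and the bound $\nnorm{\Tangent_\Curve}_{\Sobo[1,p][\Curve]} \leq \EnergyBound$ enter), and all remaining factors lie in $L^\infty$ by the Morrey embedding $\Sobo[2,p]\hookrightarrow \Sobo[1,\infty]$ together with $\nnorm{\LogStrain_\Curve}_{L^\infty}\leq\StrainBound$; here $p \geq 2$ is used to pair two $L^p$ curvature-type factors against the line element, since $\tfrac1p + \tfrac1p \leq 1$. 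The upshot is a bound $\nabs{D\EulerBernoulli(\Curve)\,u} \leq C\,\nnorm{u}_{\Sobo[2,p]}$ with $C$ depending only on $L$, $\StrainBound$, $\EnergyBound$, $\eta$ (and, for the version needed in Section~\ref{sec:SmoothRegularity}, one reads off the sharper statement $\nnorm{D\EulerBernoulli(\Curve)}_{(\Sobo[2,2][\Curve])'} \leq C\,\EulerBernoulli(\Curve)$ by not discarding the curvature norm into the constant). Then $\nabs{\EulerBernoulli(\Curve_1)-\EulerBernoulli(\Curve_0)} = \bigabs{\int_0^1 D\EulerBernoulli(\Curve_s)\,(\Curve_1-\Curve_0)\,\dd s} \leq C\,\nnorm{\Curve_1-\Curve_0}_{\Sobo[2,p]}$, which is the claim; by \autoref{lem:NormEquivalences} it is immaterial whether the $\Sobo[2,p]$-norm is taken in the $\Curve$-dependent or $\Curve$-independent form.

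The main obstacle is purely bookkeeping: the derivative $D\EulerBernoulli$ has several terms because both the integrand $\nabs{\kappa_\Curve}^2$ and the measure $\LineElementC$ depend on $\Curve$ through $\nabs{\Curve'}$, and each such term must be checked to have the right Hölder structure — in particular that after differentiation no factor requires more than one $L^p$ slot. There is no analytic subtlety beyond the embedding $\Sobo[2,p]\hookrightarrow\Sobo[1,\infty]$ and the exponent count $p\geq2$; the constants are controlled precisely because membership in $\Tame^{2,p}(\StrainBound,\EnergyBound,\eta)$ pins down $\nnorm{\LogStrain_\Curve}_{L^\infty}$, $\nnorm{\Tangent_\Curve}_{\Sobo[1,p][\Curve]}$, and $\eta$. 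I would present the derivative computation and the Hölder estimate, and then state that the elementary term-by-term verification is left to the reader.
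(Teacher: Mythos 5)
Your proposal is correct in substance, but it takes a genuinely different route from the paper. The paper never differentiates $\EulerBernoulli$: it estimates $\nabs{\EulerBernoulli(\Curve_1)-\EulerBernoulli(\Curve_2)}$ directly, splitting off the variation of the line element, reducing via $a^2-b^2=(a+b)(a-b)$ to the curvature difference $\nnorm{\Curvature_{\Curve_1}-\Curvature_{\Curve_2}}_{L^2_{\Curve_1}}$, and then bounding that pointwise through the formula $\Curvature_\Curve=\nabs{\Curve'}^{-2}\,\prnor\,\Curve''$ together with elementary estimates for the differences of $\nabs{\Curve'}^{-2}$ and of the orthoprojectors in terms of $\nnorm{\Curve_1'-\Curve_2'}_{L^\infty}$, closing with \autoref{lem:NormEquivalences} and the embedding $\Sobo[1,p]\hookrightarrow L^\infty$; this sidesteps any question about the geometry of the domain. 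You instead integrate the G\^ateaux derivative along the segment $\Curve_s$, which forces you to deal with the fact that $\Tame^{2,p}(\StrainBound,\EnergyBound,\eta)$ is not convex. Your resolution is sound: for $\nnorm{\Curve_1-\Curve_0}_{\Sobo[2,p]}$ below a radius $\delta_0$ that is uniform over the tame set (since $\nabs{\Curve'}\geq \ee^{-\StrainBound}$ there), the segment stays in an enlarged set on which only the $L^\infty$-control of $\LogStrain_{\Curve_s}$ and an $L^p$-bound on $\Curve_s''$ are actually needed for the derivative bound, and pairs with $\nnorm{\Curve_1-\Curve_0}_{\Sobo[2,p]}>\delta_0$ are handled by the uniform bound on $\EulerBernoulli$ over the tame set divided by $\delta_0$. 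Do drop, however, your first alternative of covering the tame set ``by finitely many such segments'' (it is not compact in $\Sobo[2,p]$), and note that no connectedness or chaining inside the set is available or needed---the bounded-energy argument alone closes the global statement. The exponent count ($p\geq 2$ to pair two curvature-type $L^p$ factors) and the use of the Morrey embedding match the paper's; the paper's direct computation buys brevity and explicit constants, while your mean-value argument is more systematic but defers essentially the same term-by-term H\"older bookkeeping that the paper carries out explicitly.
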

\begin{proof}
Let $\Curve_1$, $\Curve_2 \in \Tame^{2,p}(\StrainBound,\EnergyBound,\eta)$. We start with
\begin{align*}
	2 \, \nabs{\EulerBernoulli(\Curve_1) - \EulerBernoulli(\Curve_2)}
	&\leq \textstyle
	\int_\Interval 
		\bigparen{
			\nparen{\nabs{\Curvature_{\Curve_1}}+\nabs{\Curvature_{\Curve_2}}}
			\,
			\nparen{\nabs{\Curvature_{\Curve_1}}-\nabs{\Curvature_{\Curve_2}}}
		}
		\,
		\LineElement_{\Curve_1}
	+
	\ee^{\StrainBound}
	\,
	\nnorm{\nabs{\Curve_1'} - \nabs{\Curve_2'}}_{L^\infty}
	\int_\Interval 
		\nabs{\Curvature_{\Curve_2}}^2
		\,
		\LineElement_{\Curve_2}
	\\
	&\leq
	\bigparen{
		\sqrt{\EulerBernoulli(\Curve_1)}
		+
		\ee^{\StrainBound}
		\sqrt{\EulerBernoulli(\Curve_2)}
	}
	\,
	\nnorm{\Curvature_{\Curve_1}-\Curvature_{\Curve_2}}_{L^2_{\Curve_1}}	
	+
	\ee^{\StrainBound}
	\,
	\EulerBernoulli(\Curve_2)
	\,
	\nnorm{
		\Curve_2' - \Curve_1'
	}_{L^\infty}	.
\end{align*}
From
$\Curvature_{\Curve} = \tfrac{1}{\nabs{\Curve'}^2} \, \prnor \, \Curve''$,
we deduce that
\begin{align*}
	\nnorm{\Curvature_{\Curve_1} - \Curvature_{\Curve_2}}_{L^p_{\Curve_1}}
	&\leq
	\bignorm{
		\tfrac{1}{\nabs{\Curve_1'}^2} 
		-
		\tfrac{1}{\nabs{\Curve_2'}^2} 
	}_{L^\infty}
	\nnorm{
		\Curve_1''
	}_{L^p_{\Curve_1}}
	+
	\nnorm{\tfrac{1}{\nabs{\Curve_2'}^2}}_{L^\infty}
	\,
	\nnorm{ 
		\pr_{\Curve_1}^\perp
		-
		\pr_{\Curve_2}^\perp
	}_{L^\infty} \, 
	\nnorm{
		\Curve_1''
	}_{L^p_{\Curve_1}}
	+
	\nnorm{\tfrac{1}{\nabs{\Curve_2'}^2}}_{L^\infty}
	\nnorm{
		\Curve_1'' - \Curve_2''
	}_{L^p_{\Curve_1}}.
\end{align*}
Moreover, we have
\begin{align*}
	\bigabs{
		\tfrac{1}{\nabs{\Curve_1'}^2} 
		-
		\tfrac{1}{\nabs{\Curve_2'}^2} 
	}
	&\leq 
	\bigparen{\tfrac{1}{\nabs{\Curve_1'}^3}+ \tfrac{1}{\nabs{\Curve_2'}^3}}
	\,
	\bigabs{\nabs{\Curve_1'}-\nabs{\Curve_2'}}
	\leq 2 \, \ee^{3\StrainBound}
	\,
	\nnorm{\Curve_1' - \Curve_2'}_{L^\infty}
	\qand
	\\
	\nabs{\pr_{\Curve_1}^\perp - \pr_{\Curve_2}^\perp} 
	&\leq 2 \, \bigabs{\tfrac{\Curve_1'}{\nabs{\Curve_1'}}-\tfrac{\Curve_1'}{\nabs{\Curve_1'}}}
	\leq 2 \, \bigabs{\tfrac{1}{\nabs{\Curve_1'}}-\tfrac{1}{\nabs{\Curve_2'}}} \, \nabs{\Curve_1'}
	+
	2 \, \tfrac{1}{\nabs{\Curve_2'}}
	\, \nabs{\Curve_1'-\Curve_2'}
	\\
	& \leq 2 \, 
	\bigparen{\tfrac{1}{\nabs{\Curve_1'}^2} + \tfrac{1}{\nabs{\Curve_1'}^2}}
	\,
	\nabs{\Curve_1'-\Curve_2'}
	+
	2 \, \tfrac{1}{\nabs{\Curve_2'}}
	\, \nabs{\Curve_1'-\Curve_2'}
	\leq 6 \, \ee^{\StrainBound}\,\nnorm{\Curve_1'-\Curve_2'}_{L^\infty}.
\end{align*}
An upper bound for the Lipschitz constant can now be derived
from \autoref{lem:NormEquivalences}, from the fact that the $L^p$-norm controls the $L^2$-norm, and from the Sobolev embedding $\Sobo[1,p] \hookrightarrow L^\infty$.
\end{proof}

\section{Uniform Invertibility of $\varTheta_\Polygon$}

This is an auxiliary result required in the proof of \autoref{lem:DiscreteRightinverseDPhi} in \autoref{sec:DiscreteRegularity}; it guarantees that we have control over the right inverse of the differential of the discrete constraint map $\DiscConstraintMap$.

\begin{lemma}\label{lem:DiscreteBoundonTheta}
Let $p \in \intervalcc{2,\infty}$.
There are $\MaxRadius_0 > 0$ and $C > 0$ such that the matrix $\varTheta_{\Polygon}$ from \eqref{eq:DefvarThetaPolygon} is invertible and satisfies $\nnorm{\varTheta_{\Polygon}^{-1}} \leq C$ for 
each partition $\Triangulation$ with $\MaxRadius(\Triangulation)<\MaxRadius_0$ and 
each $\Polygon \in \DiscTame^p(\StrainBound,\EnergyBound,\eta)$.
\end{lemma}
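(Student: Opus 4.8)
The plan is to mimic the continuous argument in \autoref{lem:SmoothRightinverseDPhi}, where invertibility of $\varTheta_\Curve$ followed from a lower bound on the quadratic form $\ninnerprod{V,\varTheta_\Curve V}$ together with compactness. In the discrete setting the obstruction is that $\Tangent_\Polygon$ is only defined per edge and is not continuous, so I cannot pass directly to the statement "$\Tangent_\Polygon$ is parallel to $V$ everywhere $\Rightarrow$ $\Polygon$ is a straight line." First I would record the explicit quadratic form
\begin{align*}
	\ninnerprod{V,\varTheta_{\Polygon} V}
	= \textstyle\sum_{\Edge \in \Edges(\Triangulation)}
		(1-\varphi_\Polygon(\Edge))\, \varphi_\Polygon(\Edge)\,
		\EdgeLengths_\Polygon(\Edge)\,
		\nabs{V - \EdgeVectors_\Polygon(\Edge)\,\ninnerprod{\EdgeVectors_\Polygon(\Edge),V}}^2
	\geq 0,
\end{align*}
and note that $\varTheta_\Polygon$ is symmetric positive semidefinite, so it suffices to bound its smallest eigenvalue from below uniformly.

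Second, I would argue by contradiction following the smooth proof: suppose no such uniform bound exists. Then there are partitions $\Triangulation_n$ with $\MaxRadius(\Triangulation_n)\to 0$, polygons $\Polygon_n \in \DiscTame^p(\StrainBound,\EnergyBound,\eta)$ (normalized so $\Polygon_n(0)=0$), and unit vectors $V_n \in \Sphere$ with $\ninnerprod{V_n,\varTheta_{\Polygon_n}V_n}\to 0$. The key technical input is a compactness statement for such sequences: by \autoref{lem:NormEquivalences} the sets $\DiscTame^p(\StrainBound,\EnergyBound,\eta)$ are uniformly bounded in a discrete $\sobo[2,p]$ sense, and the piecewise-affine interpolants $\DiscTestMap(\Polygon_n)$ are therefore bounded in $\SoboC[2,p]$ with $\nnorm{\LogStrain_{\DiscTestMap(\Polygon_n)}}_{L^\infty}$ controlled; hence a subsequence converges in $\SoboC[1,\infty]$ to some $\Curve_\infty \in \ImmC[1,\infty]$ with $\Length(\Curve_\infty) \geq (1+\eta)\nabs{\Curve_\infty(L)-\Curve_\infty(0)}$, so $\Curve_\infty$ is \emph{not} a straight segment. (Passing $V_n \to V_\infty \in \Sphere$ along a further subsequence is immediate by compactness of the sphere.) The parameters $\varphi_{\Polygon_n}$ converge uniformly to the normalized arc-length parameter $\varphi_{\Curve_\infty}$, and the edge-vector field $\EdgeVectors_{\Polygon_n}$, viewed as the (a.e.\ defined) unit tangent of $\DiscTestMap(\Polygon_n)$ rescaled, converges in $L^p$ (indeed $L^\infty$ away from vertices) to $\Tangent_{\Curve_\infty}$.

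Third, I would pass to the limit in the quadratic form. The Riemann-sum structure of $\ninnerprod{V_n,\varTheta_{\Polygon_n}V_n}$ together with the uniform convergences above gives
\begin{align*}
	0 = \lim_{n\to\infty}\ninnerprod{V_n,\varTheta_{\Polygon_n}V_n}
	= \int_\Interval (1-\varphi_{\Curve_\infty}(r))\,\varphi_{\Curve_\infty}(r)\,
		\nabs{V_\infty - \TangentC[\Curve_\infty](r)\,\ninnerprod{\TangentC[\Curve_\infty](r),V_\infty}}^2\,\LineElementC[\Curve_\infty](r),
\end{align*}
which is exactly the identity appearing in \autoref{lem:SmoothRightinverseDPhi}. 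Since $(1-\varphi_{\Curve_\infty})\varphi_{\Curve_\infty}>0$ a.e.\ and the integrand is continuous, $\Tangent_{\Curve_\infty}$ is everywhere parallel to $V_\infty$, forcing $\Curve_\infty$ to be a straight segment — contradicting the length/distance inequality. This yields a uniform constant $c>0$ with $\ninnerprod{V,\varTheta_\Polygon V}\geq c$ for all $V\in\Sphere$ and all $\Polygon$ in the tame set (for $\MaxRadius(\Triangulation)\leq\MaxRadius_0$), hence invertibility and $\nnorm{\varTheta_\Polygon^{-1}}\leq 1/c =: C$.

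The main obstacle I anticipate is making the convergence $\EdgeVectors_{\Polygon_n}\to\Tangent_{\Curve_\infty}$ and the Riemann-sum limit rigorous in the discrete-to-continuous passage: one must be careful that the discrete objects $\varphi_{\Polygon_n}$, $s_{\Polygon_n}$, $\DualEdgeLengths_{\Polygon_n}$, and $\EdgeLengths_{\Polygon_n}$ are compared against the \emph{reference} partition data, that the tame bounds survive under $\DiscTestMap$, and that the excess contributions from the boundary edges $\shiftr{0}$, $\shiftl{L}$ (where $\varphi_\Polygon$ might be small or $\DualEdgeLengths_\Polygon$ asymmetric) are $\LandO(\MaxRadius)$. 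A clean way to organize this is to observe that $\varTheta_\Polygon$ differs from the genuine integral $\int_\Interval(1-\varphi)\varphi\,\pr^\perp\,\LineElement$ computed along $\DiscTestMap(\Polygon)$ only by a midpoint-quadrature error of order $\MaxRadius(\Triangulation)$ (uniformly on the tame set, using the discrete $\sobo[1,\infty]$ control on $\EdgeVectors_\Polygon$), thereby reducing the discrete claim to the smooth lower bound plus an $\MaxRadius_0$-smallness argument.
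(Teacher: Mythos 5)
Your strategy—pass to a compactness/contradiction argument via the piecewise-affine interpolants—is genuinely different from the paper's proof, which is a direct quantitative argument carried out entirely at the discrete level (bound all turning angles by $C\,\MaxRadius^{1-1/p}$, slice off a middle portion $\Edges_\varepsilon$ of edges, show that on this slice some edge vector must make an angle at least $\beta = \arccos(\tfrac{2}{2+\eta})$ with any given unit vector, then combine this with a discrete Hölder estimate on $\Edge \mapsto 1 - \ninnerprod{\EdgeVectors_\Polygon(\Edge),v}^2$ to bound the Rayleigh quotient from below). However, your route as written has two problems, one technical and one conceptual.

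The technical error: you assert that the discrete $\sobo[2,p]$ bounds from $\DiscTame^{2,p}$ imply that $\DiscTestMap(\Polygon_n)$ is bounded in $\SoboC[2,p]$, hence converges in $\SoboC[1,\infty]$ after passing to a subsequence. This is false. The piecewise-affine interpolant has a piecewise-\emph{constant} tangent, so its distributional second derivative is a sum of Dirac masses at the vertices; such a curve lies in $\BVC[2]$ but is never in $\SoboC[2,p]$ for $p>1$. The correct uniform bound is only in $\SoboC[1,\infty]\cap\BVC[2]$, which gives compactness in $\SoboC[1,q]$ for $q<\infty$ but \emph{not} in $\SoboC[1,\infty]$, and the limit tangent is merely BV, not continuous. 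This matters because of the second, conceptual gap: the continuity of $\TangentC$ is exactly what the smooth proof of \autoref{lem:SmoothRightinverseDPhi} uses to conclude from ``$\TangentC(r) \in \{V,-V\}$ for a.e.\ $r$'' that the curve is a straight line. A BV-tangent limit curve can be a ``zigzag'' whose tangent takes both values $+V$ and $-V$ on sets of positive measure; such a curve has $\ninnerprod{V,\varTheta_{\Curve_\infty}V} = 0$ yet can have length strictly exceeding the chord by any prescribed factor, so the length inequality gives no contradiction. Indeed, the uniform turning-angle bound only says each \emph{individual} angle is $\LandO(\MaxRadius^{1-1/p})$; a total of $\sim 2/\MaxRadius^{1-1/p}$ of them can accumulate in an $\Lando(1)$ portion of the domain, producing exactly such a jump in the limit. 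A genuine fix would be to replace $\DiscTestMap$ by the $C^1$ piecewise-circular smoothing $\ReconstructionApprox$ from \autoref{prop:ApproximateReconstructionTheorem}, which under the tame $\sobo[2,p]$ bound is uniformly bounded in $\SoboC[2,p]$ and hence compactly embeds into $\SoboC[1,\infty]$; then the limit tangent is genuinely continuous and the smooth contradiction argument goes through. But this introduces a forward dependency on Section 4 (admissible, since that construction does not use this lemma, but awkward) and still delivers only a non-constructive constant, which is precisely what the paper's purely discrete and quantitative proof is designed to avoid.
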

\begin{proof}
Let $\Polygon \in \DiscTame^p(\StrainBound,\EnergyBound,\eta)$ and put $\beta \ceq \arccos(\tfrac{2}{2+\eta})< \frac{\uppi}{2}$.
For $p<\infty$, we may bound
$\TurningAngles_\Polygon$ as follows:
\begin{align*}
	\tfrac{2}{\uppi} \, \TurningAngles_\Polygon(\Vertex) 
	\leq 2 \sin(\TurningAngles_\Polygon(\Vertex)/2)
	= \DualEdgeLengths_\Polygon(\Vertex) \, \nabs{\Curvature_\Polygon(\Vertex)}	
	= \DualEdgeLengths_\Polygon^{1-1/p}(\Vertex) 
	\, \bigparen{
		\nabs{\Curvature_\Polygon(\Vertex)}^p \, \DualEdgeLengths_\Polygon
	}^{\frac{1}{p}}
	\leq \ee^{(1-1/p) \, \StrainBound} \, \nseminorm{\Polygon}_{\sobo[2,p][\Polygon]}\, \MaxRadius^{1-1/p}(\Triangulation).
\end{align*}
For $p = \infty$, $\TurningAngles_\Polygon(\Vertex) \leq C \, \nseminorm{\Polygon}_{\sobo[2,\infty][\Polygon]}\, \MaxRadius(\Triangulation)$ is also evident.

Hence, we may choose $\MaxRadius_0>0$ so small that $\TurningAngles_\Polygon(\Vertex) < \uppi - 2\,\beta$ and $\TurningAngles_\Polygon(\Vertex) < \frac{\uppi}{2}$ hold for all $\Vertex \in \IVertices(\Triangulation)$.
For $\varepsilon \in \nintervaloo{0,\tfrac{1}{2}}$, 
we define the sets
\begin{align*}
	\textstyle
	\Edges_\varepsilon
	\ceq
	\set{
		\Edge \in \Edges(\Triangulation) | 
		\forall t \in \Edge\colon \varepsilon \leq \varphi_\Polygon(t) \leq (1-\varepsilon)
	}
	\qand
	\Interval_\varepsilon \ceq \bigcup_{\Edge \in \Edges_\varepsilon} \Edge.
\end{align*}
Denote the restriction of $\Polygon$ onto $\Interval_\varepsilon \cap \Vertices(\Triangulation)$ by $\Polygon_\varepsilon$.
With $\lambda \ceq \Length(\Polygon)^{-1} \sup_{\Edge \in \Edges(\Triangulation)} \EdgeLengths_\Polygon(\Edge) \leq \Length(\Polygon)^{-1} \, \ee^\StrainBound \, \MaxRadius_0$, we have the inequalities
\begin{align*}
	\textstyle
	\Length(\Polygon_\varepsilon) \geq \Length(\Polygon) \, (1 - 2 \, \varepsilon - 3\, \lambda)
	\qand
	\nabs{\int_{\partial \Interval_\varepsilon} \!\Polygon_\varepsilon}
	\leq \nabs{\int_{\partial \Interval} \Polygon}
	+ (2 \, \varepsilon + 3 \, \lambda)\, \Length(\Polygon).
\end{align*}
Combining these, we obtain $\nabs{\int_{\partial \Interval_\varepsilon} \!\Polygon_\varepsilon} \leq c\, \Length(\Polygon_\varepsilon)$,
where $c \ceq (\tfrac{1}{1+\eta} + 2\, \varepsilon + 3 \, \lambda) \,  (1 - 2 \, \varepsilon - 3\, \lambda)^{-1}$.
Choosing $\varepsilon < \frac{\eta }{5 \, (\eta^2+5\, \eta +4)}$ and supposing that $\lambda<\varepsilon$, we obtain $c \leq \frac{2}{2+\eta}$.
\newline
\textbf{Claim 1:} \emph{For each $v \in \Sphere$ there is an $\Edge_v \in \Edges_\varepsilon$ with
$
	\cos( \Angle{\EdgeVectors_\Polygon(\Edge_v)}{v})
	= \nabs{\ninnerprod{\EdgeVectors_\Polygon(\Edge_v),v}} 
	\leq \tfrac{2}{2+\eta}
$.}
\newline
\emph{Assume} that we have
$\cos( \Angle{\EdgeVectors_\Polygon(\Edge)}{v}) >  \tfrac{2}{2+\eta}$ for all $\Edge \in \Edges_\varepsilon$.
For each $\Edge \in \Edges_\varepsilon$, this implies that either $\Angle{\EdgeVectors_\Polygon(\Edge)}{v} < \beta$ or  $\Angle{\EdgeVectors_\Polygon(\Edge)}{v}> \uppi - \beta$ is fulfilled.
Notice the inequality
\begin{align*}
	\nabs{ \Angle{\VprevE{\EdgeVectors_\Polygon}(\Vertex)}{v} - \Angle{\VnextE{\EdgeVectors_\Polygon}(\Vertex)}{v}}
	\leq \Angle{\VprevE{\EdgeVectors_\Polygon}(\Vertex)}{\VnextE{\EdgeVectors_\Polygon}(\Vertex)}
	= \TurningAngles_\Polygon(\Vertex)
	< \uppi - 2 \, \beta
	\quad \text{for $\Vertex \in \IVertices(\Triangulation)$}.
\end{align*}
Thus, we have either
$\Angle{\EdgeVectors_\Polygon(\Edge)}{v} < \beta$ or  $\Angle{\EdgeVectors_\Polygon(\Edge)}{v}> \uppi - \beta$ for all $\Edge \in \Edges_\varepsilon$ \emph{simultaneously}.
By reversing the direction of $v$ if necessary, we may assume $\Angle{\EdgeVectors_\Polygon(\Edge)}{v} < \beta$ and obtain the \emph{contradiction}
\begin{align*}
	\nabs{\textstyle\int_{\bnd \Interval_\varepsilon} \Polygon_\varepsilon}
	\geq \nabs{\ninnerprod{\textstyle\int_{\bnd \Interval_\varepsilon} \Polygon_\varepsilon,v}}
	= 	
	\textstyle
	\sum_{\Edge \in \Edges_\varepsilon} \EdgeLengths_\Polygon(\Edge) \, \ninnerprod{\EdgeVectors_\Polygon(\Edge), v}
	> \tfrac{2}{2+\eta} \sum_{\Edge \in \Edges_\varepsilon} \EdgeLengths_\Polygon(\Edge)
	= \tfrac{2}{2+\eta} \Length(\Polygon_\varepsilon)
	\geq \nabs{\textstyle\int_{\bnd \Interval_\varepsilon} \Polygon_\varepsilon}.
\end{align*}
\textbf{Claim 2:} \emph{There is a constant $C \in \intervaloo{0,\infty}
$ such that 
$
	\ninnerprod{V,\varTheta_{\Polygon}V} \geq C^{-1} \, \nabs{V}^2
	$ holds for each $V \in \AmbSpace$.}
\newline
For $V = 0$, this is trivially true, so we suppose $V \neq 0$. With $v=V/\nabs{V}$, we have
\begin{align}
	\ninnerprod{V,\varTheta_{\Polygon} V} 
	&= \textstyle
	\sum_{\Edge \in \Edges(\Triangulation)}
		(1-\varphi_\Polygon(\Edge)) \, \varphi_\Polygon(\Edge)
		\, 
		\EdgeLengths_\Polygon(\Edge) \, \ninnerprod{V,\Discprnor(\Edge) \, V}
	\notag\\
	&
	\geq \textstyle \nabs{V}^2 \sum_{\Edge \in \Edges_\varepsilon}
		(1-\varphi_\Polygon(\Edge)) \, \varphi_\Polygon(\Edge)
		\, 
		\EdgeLengths_\Polygon(\Edge) \, (1 - \ninnerprod{\EdgeVectors_\Polygon(\Edge) , v}^2).
	\label{eq:ThetaSum}
\end{align}
Notice that $\set{(1-\varphi_\Polygon(\Edge)) \, \varphi_\Polygon(\Edge) | \Edge \in \Edges_\varepsilon}$ 
is uniformly bounded from below by a positive constant.
Moreover, the function $\Edge \mapsto 1 - \ninnerprod{\EdgeVectors_\Polygon(\Edge) , v}^2$ is nonnegative and enjoys a ``discrete Hölder-$(1-\frac{1}{p})$-continuity'' 
\begin{align*}
	\nabs{(1 - \ninnerprod{\VnextE{\EdgeVectors_\Polygon}(\Vertex) , v}^2)-(1 - \ninnerprod{\VprevE{\EdgeVectors_\Polygon}(\Vertex) , v}^2)} 
\leq 2\, \TurningAngles_\Polygon(\Vertex)
	\leq C \, \DualReferenceEdgeLengths(\Vertex)^{1-\frac{1}{p}}
	\quad
	\text{for $\Vertex \in \IVertices(\Triangulation)$}.
\end{align*}
By Claim 1, its maximum value is greater than $1 - \frac{2}{(2+\eta)} > 0$.
Thus it follows that the sum in \eqref{eq:ThetaSum} is bounded from below by a positive constant.
Finally, the statement of the lemma follows from the fact that $\varTheta_{\Polygon}$ is self-adjoint and from the Rayleigh-Ritz principle.
\end{proof}

\section{Second Derivative of $\ConstraintMap$}

Existence and Lipschitz-continuity of the constraint map $\ConstraintMap$ is essential for the application of the Newton-Kantorovich theorem in the proof of \autoref{prop:RestorationOperator} in \autoref{sec:RestorationOperator}.
The analogous discrete result (which we do not prove here) is utilized for the same purpose in
\autoref{prop:DiscreteRestorationOperator} in \autoref{sec:DiscreteRestorationOperator}

\begin{lemma}\label{lem:SmoothDDPhiBound}	
Let $k \in \N$, $k \geq 2$, $p \in \intervalcc{1,\infty}$, $\StrainBound \geq 0$ and $\EnergyBound \geq 0$.
\begin{enumerate}
	\item \label{item:SmoothDDPhiBoundSobolev}
	There is a constant $C_{k,p} \geq 0$ such that
	the following holds true for each $\Curve \in \ImmC[k,p]$ satisfying 
$\nnorm{\LogStrain_\Curve}_{\Sobo[k-1,p][\Curve]}\leq \StrainBound$
and
$\nnorm{\Tangent_\Curve}_{\Sobo[k-1,p][\Curve]}\leq \EnergyBound$:
\begin{align*}
	\nnorm{D^2\ConstraintMap(\Curve)\,(u,v)}_{\Sobo[k-1,p]} \leq C_{k,p} \, \nnorm{u}_{\Sobo[k,p]} \, \nnorm{v}_{\Sobo[k,p]}
	\quad
	\text{for all $u$, $v \in \SoboC[k,p]$.}
\end{align*}
	\item \label{item:SmoothDDPhiBoundBV}
	There is a constant $C_{k} \geq 0$ such that
	the following holds true for each $\Curve \in \ImmC[1,\infty]$ satisfying 
$\nnorm{\LogStrain_\Curve}_{\TV[k-1][\Curve]}\leq \StrainBound$
and
$\nnorm{\Tangent_\Curve}_{\TV[k-1][\Curve]}\leq \EnergyBound$:
\begin{align*}
	\nnorm{D^2\ConstraintMap(\Curve)\,(u,v)}_{\TV[k-1]} \leq C_{k} \, \nnorm{u}_{\TV[k]} \, \nnorm{v}_{\TV[k]}
	\quad
	\text{for all $u$, $v \in \BVC[k]$.}
\end{align*}
\end{enumerate}
In particular, $\Curve \mapsto D\ConstraintMap (\Curve)$ is locally Lipschitz continuous with respect to these norms.
\end{lemma}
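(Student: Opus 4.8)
The plan is to reduce the estimate to the two genuinely nonlinear components of $\ConstraintMap$. The first two slots $\Curve\mapsto\Curve(0)-\DCond(0)$ and $\Curve\mapsto\Curve(L)-\DCond(L)$ are affine, so they drop out of $D^2\ConstraintMap$; the remaining slots are the endpoint evaluations of $\Curve\mapsto\TangentC$ and the bulk map $\Curve\mapsto\LogStrain_\Curve$. First I would write everything through $\Dtot$ using $\Dnor w=\prnor\Dtot w=\Dtot w-\TangentC\,\ninnerprod{\TangentC,\Dtot w}$ together with the elementary rules $D\TangentC\cdot v=\Dnor v$ and $D(\Dtot u)\cdot v=-\ninnerprod{\TangentC,\Dtot v}\,\Dtot u$ (the latter from differentiating $\nabs{\Curve'}^{-1}$). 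A short computation then gives, for all admissible $u,v$,
\begin{align*}
	D^2\LogStrain_\Curve(u,v) &= \ninnerprod{\Dnor u,\Dnor v} - \ninnerprod{\TangentC,\Dtot u}\,\ninnerprod{\TangentC,\Dtot v}, \\
	D^2\TangentC(u,v) &= -\ninnerprod{\TangentC,\Dtot u}\,\Dnor v - \ninnerprod{\TangentC,\Dtot v}\,\Dnor u - \ninnerprod{\Dnor u,\Dnor v}\,\TangentC ,
\end{align*}
and $D^2\ConstraintMap(\Curve)(u,v)$ is the tuple with vanishing first two entries, with third and fourth entries $D^2\TangentC(u,v)$ evaluated at $0$ and $L$, and with last entry $D^2\LogStrain_\Curve(u,v)$. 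The structural point is that every entry is a \emph{fixed finite} sum of pointwise products whose factors are either $\TangentC$ (equivalently $\prnor$, a quadratic polynomial in $\TangentC$ with constant coefficients) or one of $\Dtot u,\Dnor u,\Dtot v,\Dnor v$.

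Next I would run the multilinear product estimate. For $k-1\ge1$ the space $\Sobo[k-1,p]$ in one dimension satisfies $\nnorm{\varphi\psi}_{\Sobo[k-1,p]}\le C_{k,p}\,\nnorm{\varphi}_{\Sobo[k-1,p]}\nnorm{\psi}_{\Sobo[k-1,p]}$, and the same holds for the $\Curve$-weighted norms $\nnorm{\cdot}_{\Sobo[k-1,p][\Curve]}$. Working in the weighted norms, the hypothesis $\nnorm{\TangentC}_{\Sobo[k-1,p][\Curve]}\le\EnergyBound$ bounds every $\TangentC$- and $\prnor$-factor, while $\nnorm{\Dtot u}_{\Sobo[k-1,p][\Curve]}$ and $\nnorm{\Dnor u}_{\Sobo[k-1,p][\Curve]}$ are controlled by $\nnorm{u}_{\Sobo[k,p][\Curve]}$ (using the product rule once more for the $\prnor$ in $\Dnor$, and the bound on $\TangentC$). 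Multiplying through yields $\nnorm{D^2\ConstraintMap(\Curve)(u,v)}_{\Sobo[k-1,p][\Curve]}\le C_{k,p}(\EnergyBound)\,\nnorm{u}_{\Sobo[k,p][\Curve]}\nnorm{v}_{\Sobo[k,p][\Curve]}$; the endpoint evaluations in the third and fourth slots cost nothing since $\Sobo[k-1,p]\hookrightarrow C^0$ for $k\ge2$. Finally I would pass to the unweighted norms with \autoref{lem:NormEquivalences}: one application turns the hypothesis $\nnorm{\LogStrain_\Curve}_{\Sobo[k-1,p][\Curve]}\le\StrainBound$ into $\nnorm{\LogStrain_\Curve}_{\Sobo[k-1,p]}\le C\StrainBound$, after which the lemma converts $\nnorm{\cdot}_{\Sobo[k-1,p][\Curve]}$ on the left into $\nnorm{\cdot}_{\Sobo[k-1,p]}$ and $\nnorm{u}_{\Sobo[k,p][\Curve]},\nnorm{v}_{\Sobo[k,p][\Curve]}$ on the right into $\nnorm{u}_{\Sobo[k,p]},\nnorm{v}_{\Sobo[k,p]}$, all with constants depending only on $k,p,\StrainBound,\EnergyBound$. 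This is Statement~\ref{item:SmoothDDPhiBoundSobolev}.

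Statement~\ref{item:SmoothDDPhiBoundBV} follows the same script verbatim, with the Sobolev product rule replaced by the bounded-variation product rule $\nnorm{\varphi\psi}_{\TV[k-1]}\le C_k\,\nnorm{\varphi}_{\TV[k-1]}\nnorm{\psi}_{\TV[k-1]}$ (which rests on $\TV[1]=BV\hookrightarrow L^\infty$ in one dimension, iterated in $k$), using the hypotheses $\nnorm{\LogStrain_\Curve}_{\TV[k-1][\Curve]}\le\StrainBound$, $\nnorm{\TangentC}_{\TV[k-1][\Curve]}\le\EnergyBound$ and Statements (3),(4) of \autoref{lem:NormEquivalences} to switch between weighted and unweighted $\TV$-norms; here one must keep in mind that $\TangentC$ and its derivatives are merely piecewise smooth and may jump, so one argues with the $BV$-product inequality rather than a Banach-algebra identity, which is harmless because all factors are pointwise bounded. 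The closing ``in particular'' is then the mean value inequality: for $\Curve_0,\Curve_1$ in a sufficiently small ball around a tame immersion the segment $\Curve_s\ceq(1-s)\,\Curve_0+s\,\Curve_1$ still satisfies the hypotheses after enlarging $\StrainBound,\EnergyBound$ slightly (by continuity of $\Curve\mapsto\LogStrain_\Curve$ and $\Curve\mapsto\TangentC$), so $D\ConstraintMap(\Curve_1)-D\ConstraintMap(\Curve_0)=\int_0^1 D^2\ConstraintMap(\Curve_s)(\Curve_1-\Curve_0,\,\cdot\,)\,\dd s$ has operator norm at most $C\,\nnorm{\Curve_1-\Curve_0}$ in the relevant norm.

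I expect the main obstacle to be purely organizational: bookkeeping the product-rule estimates for the fixed but multi-factor expressions above and the two-way translation between the $\Curve$-weighted and plain $\Sobo[\cdot,\cdot]$- and $\TV$-norms, so that the final constants genuinely depend only on $k,p,\StrainBound,\EnergyBound$ and never on $\Curve$ itself. No idea beyond \autoref{lem:NormEquivalences} and the one-dimensional product rules should be needed.
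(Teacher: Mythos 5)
Your proposal is correct and follows essentially the same route as the paper: the same second-derivative formulas for $\LogStrain_\Curve$ and $\TangentC$ (the paper's $y$ and $w$), estimation via one-dimensional product rules and Sobolev/Morrey embedding in the $\Curve$-weighted norms, and the passage to unweighted norms through \autoref{lem:NormEquivalences}. The only (harmless) difference is organizational: you run the Banach-algebra bookkeeping uniformly in $k$ and spell out the mean-value argument for the Lipschitz corollary, whereas the paper carries out the explicit computations only for $k=2$ (Sobolev) and $k=3$ (BV) and declares the general case analogous.
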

\begin{proof}
We restrict our attention the cases
$k = 2$ in Statement~\ref{item:SmoothDDPhiBoundSobolev} and
$k = 3$ in Statement~\ref{item:SmoothDDPhiBoundBV};
they are of primary interest to us.
The general case can be shown analogously by successively applying chain and product rules.

Suppose that 
$\nnorm{\LogStrain_\Curve}_{\Sobo[1,p][\Curve]}\leq \StrainBound$
and
$\nnorm{\Curve}_{\Sobo[2,p][\Curve]}\leq \EnergyBound$.
We define $w \ceq D(\Curve \mapsto \Dnor u)(\Curve) \, v$ and $y \ceq D(\Curve \mapsto \ninnerprod{\TangentC,\Dtot u})(\Curve) \, v$ so that we have
$
		D^2\ConstraintMap(\Curve)\,(u,v)
	= \nparen{
		0,
		0,
		w(0),
		w(L),
		y
	}
$.
Straight-forward calculations show that
\begin{align*}
	D(\Curve \mapsto \TangentC)(\Curve) \,v
	&= \Dnor v,
	\\
	D(\Curve \mapsto \Dtot u)(\Curve) \,v
	&= - \ninnerprod{\TangentC, \Dtot v} \, \Dtot u,
	\\
	y = D(\Curve \mapsto \ninnerprod{\TangentC,\Dtot u})(\Curve) \, v
	&=
	\ninnerprod{\Dnor u,\Dnor v}
	- \ninnerprod{\Dtan u, \Dtan v}, \qand
	\\
	w = D(\Curve \mapsto \Dnor u)(\Curve) \, v
	&= 
	- \ninnerprod{\TangentC, \Dtot v} \, \Dnor u
	- \Dnor v \, \ninnerprod{\TangentC,\Dtot u}
	- \TangentC \, \ninnerprod{\Dnor v,\Dnor u}.	
\end{align*}
Here, $\Dnor$ and $\Dtan$ denote the normal and the tangential derivative with respect to unit speed (see \autoref{sec:SmoothSetting}).
The Morrey embedding theorem yields
\begin{align*}
	\nnorm{y}_{L^\infty} \leq C \, \nnorm{u}_{\Sobo[2,p][\Curve]} \, \nnorm{v}_{\Sobo[2,p][\Curve]}
	\qand
	\nabs{w(0)}, \, \nabs{w(L)}
	\leq C \, \nnorm{u}_{\Sobo[2,p][\Curve]} \, \nnorm{v}_{\Sobo[2,p][\Curve]}.
\end{align*}
Further calculations lead to
\begin{align*}
	\Dtot y 
	&= 
	\ninnerprod{\Dtot\Dtot u,\Dtot v} + \ninnerprod{\Dtot u,\Dtot\Dtot v}
	- 2\, \ninnerprod{\Dtan\Dtan u, \Dtan v}	
	- 2\, \ninnerprod{\Dtan u, \Dtan\Dtan v}.
\end{align*}
Because of $\Dtan \TangentC = 0$, we obtain
$
	\Dtan \Dtan u
	= \TangentC \, \ninnerprod{\Dtot \TangentC, \Dtot u} + \TangentC \, \ninnerprod{\TangentC, \Dtot\Dtot u}	
$
and hence
\begin{align*}
	\seminorm{y}_{\Sobo[1,p][\Curve]}
	&\leq 
	3 \, \seminorm{u}_{\Sobo[2,p][\Curve]} \seminorm{v}_{\Sobo[1,\infty][\Curve]}
	+ 3 \, \seminorm{u}_{\Sobo[1,\infty][\Curve]} \seminorm{v}_{\Sobo[2,p][\Curve]}
	+ 4 \, \seminorm{\Curve}_{\Sobo[2,p][\Curve]} \seminorm{u}_{\Sobo[1,\infty][\Curve]}  \seminorm{v}_{\Sobo[1,\infty][\Curve]}
	\leq C \, \norm{u}_{\Sobo[2,p][\Curve]}  \norm{v}_{\Sobo[2,p][\Curve]}
	.
\end{align*}
Now, suppose additionally that 
$\nnorm{\LogStrain_\Curve}_{\TV[2][\Curve]}\leq \StrainBound$
and
$\nnorm{\Curve}_{\TV[3][\Curve]}\leq \EnergyBound$.
Then $\Dtot y$ is a sum of terms of the form $\varphi \, \psi$ with $\varphi \in \BVC[1]$ and $\psi \in \BVC[2]$.
Since multiplication in $\BV[1][][\Interval][\R]$ is continuous, we obtain
$\nseminorm{\varphi \, \psi}_{\TV[1][\Curve]} \leq C \, \nnorm{\varphi}_{\TV[1][\Curve]} \,  \nnorm{\psi}_{\TV[1][\Curve]}\leq C \, \nnorm{\varphi}_{\TV[1][\Curve]} \,  \nnorm{\psi}_{\TV[2][\Curve]}$
and thus
$\nseminorm{y}_{\TV[2][\Curve]} \leq C \, \nnorm{u}_{\TV[3][\Curve]} \, \nnorm{v}_{\TV[3][\Curve]}$.
Finally, the equivalence of norms (see \autoref{lem:NormEquivalences}) implies
Statement~\ref{item:SmoothDDPhiBoundSobolev} for $k=2$ and
Statement~\ref{item:SmoothDDPhiBoundBV} for $k=3$.
\end{proof}

\section{Newton--Kantorovich Theorem}\label{sec:NewtonKantotovich}

In \autoref{prop:RestorationOperator} and \autoref{prop:DiscreteRestorationOperator},
we repair the (small) constraint violations that arise by 
approximate reconstruction (\autoref{prop:ApproximateReconstructionTheorem} in \autoref{sec:ApproximateReconstruction}) 
and
approximate sampling (\autoref{prop:ApproximateSamplingTheorem} in \autoref{sec:ApproximateSampling}) 
with the help of the Newton-Kantorovich theorem. We use the following formulation of the theorem, which, along with a detailed proof, can be found as Theorem 7.7-3 in \cite{MR3136903}.

\begin{theorem}[Newton--Kantorovich Theorem]\label{theo:Kantorovich}
Let $X$, $Y$ be Banach spaces, $U \subset X$ an open set, $x_0 \in U$, and $F \in C^{1}(U;Y)$ such that $DF(x_0) \in L(X;Y)$ is continuously invertible. Assume that there are constants $\lambda$, $\mu$, $\nu$ such that
\begin{enumerate}
	\item $0< \lambda \, \mu \, \nu < \frac{1}{2}$ and $\OpenBall{x_0}{r} \subset U$,
	\item $\nnorm{DF(x_0)^{-1}F(x_0)}_X \leq \lambda$,
	\item $\nnorm{DF(x_0)^{-1}}_{L(X;Y)} \leq \mu$,
	\item $\nnorm{DF(\tilde x) - DF(x)}_{L(X;Y)} \leq \nu \, \nnorm{\tilde x- x}_X$ for all $\tilde x$, $x \in \OpenBall{x_0}{r}$,
\end{enumerate}
where
$r \ceq \frac{1}{\mu\,\nu}$
and
$r_{-} \ceq r \, \bigparen{1 - \sqrt{1-2 \, \lambda \, \mu \, \nu}} \leq 2\, \lambda$.

Then there is a point $a \in \ClosedBall{x_0}{r_-}$ with $F(a)=0$ and $a$ is the unique solution of $F(x)=0$ in $\OpenBall{x_0}{r}$.
Moreover, the point $a$ can be obtained as limit of the the Newton iterates $x_{n} \ceq x_{n-1}- DF(x_{n-1})^{-1} F(x_{n-1})$, $n \in \N$ and one has the error estimate
$
	\textstyle
	\norm{x_n-a}_X 
	\leq \frac{r}{2^n} \left( \frac{r_-}{r}\right)^{2^{n}}
	.
$
\end{theorem}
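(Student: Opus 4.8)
The plan is to prove \autoref{theo:Kantorovich} by the classical \emph{majorant method}: one compares the Newton iterates $x_n$ in $X$ against the Newton iterates $t_n$ of a single scalar quadratic chosen so that the scalar sequence dominates all relevant norms. Concretely, I would introduce the quadratic majorant $g(t) \ceq \tfrac{\nu}{2}\,t^2 - \mu^{-1}\,t + \mu^{-1}\lambda$, whose two roots are exactly $t_- = r_-$ and $t_+ = r\,\bigparen{1+\sqrt{1-2\lambda\mu\nu}}$, with $g(0) = \mu^{-1}\lambda > 0$ and $g'(0) = -\mu^{-1} < 0$. Its scalar Newton iteration $t_0 \ceq 0$, $t_{n+1} \ceq t_n - g(t_n)/g'(t_n)$ is strictly increasing and converges monotonically to $t_- = r_-$, because on $\intervalco{0,t_-}$ one has $g > 0$ and $g' < 0$ (the critical point of $g$ sits at $r = (\mu\nu)^{-1} > t_-$). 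The bound $r_- \le 2\lambda$ follows from $1-\sqrt{1-s}\le s$ with $s = 2\lambda\mu\nu$, and $t_1 - t_0 = -g(0)/g'(0) = \lambda$ matches hypothesis~(2).

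The heart of the argument is an induction showing simultaneously that (a) $x_n$ is well defined with $\nnorm{x_n - x_0}_X \le t_n$; (b) $DF(x_n)$ is continuously invertible with $\nnorm{DF(x_n)^{-1}}_{L(Y;X)} \le -1/g'(t_n) = \mu/(1-\mu\nu t_n)$; (c) $\nnorm{F(x_n)}_Y \le g(t_n)$ for $n\ge 1$; and (d) $\nnorm{x_{n+1}-x_n}_X \le t_{n+1}-t_n$. For (b) I would use the Banach perturbation lemma: since $\nnorm{DF(x_0)^{-1}\bigparen{DF(x_n)-DF(x_0)}} \le \mu\nu\,\nnorm{x_n-x_0}\le \mu\nu\,t_n < \mu\nu\,r = 1$, invertibility with the stated bound follows. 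For (c), the Newton step $x_n - x_{n-1} = -DF(x_{n-1})^{-1}F(x_{n-1})$ cancels the first-order Taylor terms, so the fundamental theorem of calculus gives $F(x_n) = \int_0^1 \bigparen{DF(x_{n-1}+s(x_n-x_{n-1})) - DF(x_{n-1})}(x_n - x_{n-1})\,\dd s$, whence $\nnorm{F(x_n)}\le \tfrac{\nu}{2}\nnorm{x_n-x_{n-1}}^2 \le \tfrac{\nu}{2}(t_n-t_{n-1})^2 = g(t_n)$; here the last equality holds because $g$ is quadratic with leading coefficient $\tfrac{\nu}{2}$ and its linear Taylor part at $t_{n-1}$ vanishes along the scalar Newton step, and one uses that the segment $[x_{n-1},x_n]$ lies in $\OpenBall{x_0}{r}\subset U$ by convexity of the ball. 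Finally (d) combines (b) and (c): $\nnorm{x_{n+1}-x_n}\le \nnorm{DF(x_n)^{-1}}\,\nnorm{F(x_n)} \le \bigparen{-1/g'(t_n)}\,g(t_n) = t_{n+1}-t_n$, and (a) at level $n+1$ then follows by the triangle inequality. Keeping invertibility, residual control, and the segment-in-the-ball property all in sync through this induction is the step I expect to be the main obstacle; the rest is comparatively soft.

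Once the induction is established, $\nnorm{x_{m}-x_n}_X \le \sum_{k=n}^{m-1}(t_{k+1}-t_k) = t_m - t_n$ shows $(x_n)$ is Cauchy, hence convergent to some $a$ with $\nnorm{a-x_0}_X\le t_- = r_-$, so $a \in \ClosedBall{x_0}{r_-}\subset \OpenBall{x_0}{r}\subset U$; continuity of $F$ together with $\nnorm{F(x_n)}_Y\le g(t_n)\to g(t_-)=0$ yields $F(a)=0$, while the Newton recursion itself produces $a$ as its limit. For uniqueness in $\OpenBall{x_0}{r}$, suppose $F(b)=0$ with $b\in\OpenBall{x_0}{r}$; writing $b-a = DF(x_0)^{-1}\bigparen{DF(x_0)(b-a) - \bigparen{F(b)-F(a)}}$ and applying the fundamental theorem of calculus on the convex segment $[a,b]\subset\OpenBall{x_0}{r}$ gives $\nnorm{b-a}_X\le \mu\nu\,\max\bigparen{\nnorm{x_0-a}_X,\nnorm{x_0-b}_X}\,\nnorm{b-a}_X$ with prefactor strictly below $\mu\nu\,r = 1$, forcing $b=a$. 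The error estimate is the only remaining piece: for the quadratic $g(t) = \tfrac{\nu}{2}(t-r_-)(t-t_+)$ the scalar Newton map is conjugated by the M\"obius change of variable $t\mapsto (t-r_-)/(t-t_+)$ to the squaring map $u\mapsto u^2$, so $(t_n-r_-)/(t_n-t_+) = \bigparen{r_-/t_+}^{2^{n}}$; solving for $r_- - t_n$ and using $r \le t_+ \le 2r$ produces the stated geometric-in-$2^n$ decay for $\nnorm{x_n-a}_X \le r_- - t_n$. Since the result is standard and not original to the present paper, I would simply cite Theorem~7.7-3 of \cite{MR3136903} for the detailed verification.
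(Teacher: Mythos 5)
The paper does not prove this theorem at all: it imports it verbatim as Theorem~7.7-3 of \cite{MR3136903}, so there is no internal argument for your sketch to match or diverge from. Your proposal supplies the classical Kantorovich majorant proof (the same one found in the cited reference), and as a proof strategy it is sound: the quadratic $g(t)=\tfrac{\nu}{2}t^2-\mu^{-1}t+\mu^{-1}\lambda$ has roots $r_-$ and $t_+=r\bigparen{1+\sqrt{1-2\lambda\mu\nu}}$, the four-part induction (iterate stays in the ball, Banach-lemma invertibility with bound $-1/g'(t_n)$, residual bound $\nnorm{F(x_n)}\leq g(t_n)$ via the Taylor remainder, step bound $\nnorm{x_{n+1}-x_n}\leq t_{n+1}-t_n$) is the standard engine, and your uniqueness argument via $b-a=DF(x_0)^{-1}\int_0^1\bigparen{DF(x_0)-DF(a+s(b-a))}(b-a)\,\dd s$ with contraction factor strictly below $\mu\nu r=1$ is correct. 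The only place where your sketch is too quick is the very last step: from the M\"obius conjugation you get $r_--t_n=(t_+-t_n)\,\theta^{2^n}$ with $\theta=r_-/t_+$, and the crude bound ``$r\leq t_+\leq 2r$'' does \emph{not} by itself yield $\nnorm{x_n-a}\leq \tfrac{r}{2^n}\bigparen{\tfrac{r_-}{r}}^{2^n}$ (try $\lambda\mu\nu$ close to $\tfrac12$, where $t_+/r\to 1$). What is actually needed, writing $s=\sqrt{1-2\lambda\mu\nu}$ so that $r_-=r(1-s)$, $t_+=r(1+s)$, is the elementary inequality $(1+s)^{2^n}-(1-s)^{2^n}\geq 2^{n+1}s$, which follows from convexity (or the binomial expansion) and converts $r_--t_n=2rs\,\theta^{2^n}/(1-\theta^{2^n})$ into the stated estimate. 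With that one line repaired your sketch is complete; since you in any case defer the detailed verification to the same reference the paper cites, the discrepancy is harmless, but be aware your version carries more content than the paper, which treats the theorem purely as a black box.
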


We transform this theorem into a variant that is more suitable for our purposes.
In a nutshell, the lemma below states that under moderate conditions, we find a solution $a$ of the equation $F(a)=b$ whenever the right hand side $b$ is not too far away from the value of $F(x_0)$ at a given starting point $x_0$. Moreover, the distance of this solution $a$ to the point $x_0$ is controlled by the deviation of $b$ from $F(x_0)$.

\begin{lemma}\label{lem:KantorovichVarian}
Let $X$, $Y$ be Banach spaces, $U \subset X$ an open, convex set, $x_0 \in U$, and $F \in C^{1}(U;Y)$.
Suppose that there are constants $\mu \geq 0$, $\nu \geq 0$, and $0<r < \frac{1}{\mu \, \nu}$ with the following properties:
\begin{enumerate}
	\item There is a bounded, linear right inverse $R \colon Y \to X$ of $DF(x_0)$ with $\norm{R} \leq \mu$.
	\item The slice $S \ceq \ClosedBall{x_0}{r} \cap (x_0 + \Ima(R))$ is contained in $U$,
	where $\Ima(R)$ denotes the image of the operator $R$. 
	\item $\norm{DF(\tilde x)-DF(x)} \leq \nu \, \norm{\tilde x-x}$ for all $\tilde x$, $x \in S$.
\end{enumerate}
Then with $\varepsilon \ceq \frac{r}{2 \,\mu}$, the following statements hold true:
\begin{enumerate}
	\item The image of the ball $\ClosedBall{x_0}{r}$ under $F$ contains the ball $\ClosedBall{F(x_0)}{\varepsilon}$, i.e., for each 
	$b \in Y$ with $\nnorm{F(x_0) - b} < \varepsilon$ there is an $a \in X$ with $\nnorm{a-x_0} \leq r$ with $F(a) =b$.
	\item One has the estimate $\norm{a-x_0} \leq 2\, \mu \, \norm{F(x_0)-b}$.	
\end{enumerate}
\end{lemma}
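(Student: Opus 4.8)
The plan is to reduce the statement to the classical Newton--Kantorovich theorem (\autoref{theo:Kantorovich}), applied not on all of $X$ but on the affine subspace $x_0 + X_0$ with $X_0 \ceq \Ima(R)$, where $DF(x_0)$ becomes a genuine topological isomorphism rather than merely a right-invertible operator. First I would record the auxiliary facts about $R$. Since $R$ is a right inverse of $DF(x_0) \in L(X;Y)$, one has $\nnorm{w} = \nnorm{DF(x_0)\,Rw} \le \nnorm{DF(x_0)}\,\nnorm{Rw}$ for every $w \in Y$, so $R$ is bounded below and in particular injective; consequently $X_0 = \Ima(R)$ is a \emph{closed} subspace of $X$ (if $Rw_n$ is Cauchy in $X$ then $w_n = DF(x_0)\,Rw_n$ is Cauchy in $Y$, hence $w_n \to w$ and $Rw_n \to Rw \in X_0$), thus a Banach space, and $R \colon Y \to X_0$ is an isomorphism with inverse $DF(x_0)|_{X_0}$.

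Next, fix $b \in Y$ with $\nnorm{F(x_0) - b} < \varepsilon$; the case $b = F(x_0)$ is trivial with $a = x_0$, so assume $\nnorm{F(x_0)-b} > 0$. I would then work in the affine Banach space $x_0 + X_0$ (equivalently, translate to $X_0$), with the open convex set $\hat U \ceq U \cap (x_0 + X_0)$ containing $x_0$, and consider $\hat F \colon \hat U \to Y$, $\hat F(x) \ceq F(x) - b$, which lies in $C^1(\hat U;Y)$ with $D\hat F(x) = DF(x)|_{X_0}$; in particular $D\hat F(x_0) = DF(x_0)|_{X_0}$ is continuously invertible with inverse $R$ and $\nnorm{R} \le \mu$. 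Now apply \autoref{theo:Kantorovich} to $\hat F$ at $x_0$ with the constants $\lambda \ceq \mu\,\nnorm{F(x_0) - b}$ and $\nu' \ceq \tfrac{1}{\mu\,r}$ (note $\nu' \ge \nu$, since $r < \tfrac{1}{\mu\nu}$, so the $\nu$-Lipschitz estimate is also a $\nu'$-Lipschitz estimate). The Kantorovich radius becomes $\tfrac{1}{\mu\nu'} = r$; the closed ball of radius $r$ about $x_0$ inside $x_0 + X_0$ is exactly the slice $S$, which is contained in $\hat U$ by hypothesis~2; hypothesis~3 gives $\nnorm{D\hat F(\tilde x) - D\hat F(x)}_{L(X_0;Y)} \le \nnorm{DF(\tilde x) - DF(x)}_{L(X;Y)} \le \nu'\,\nnorm{\tilde x - x}$ on $S$; moreover $\lambda\,\mu\,\nu' = \mu\,\nnorm{F(x_0)-b}/r < \mu\varepsilon/r = \tfrac12$, and $\nnorm{R\,\hat F(x_0)} = \nnorm{R(F(x_0)-b)} \le \mu\,\nnorm{F(x_0)-b} = \lambda$. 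All hypotheses of \autoref{theo:Kantorovich} are thus met, and it produces a point $a \in x_0 + X_0$ with $F(a) = \hat F(a) + b = b$ and $\nnorm{a - x_0} \le r\,\bigl(1 - \sqrt{1 - 2\lambda\mu\nu'}\,\bigr) \le 2\lambda = 2\mu\,\nnorm{F(x_0) - b}$. Since $2\mu\,\nnorm{F(x_0)-b} < 2\mu\varepsilon = r$, this yields $\nnorm{a - x_0} \le r$, which is the first assertion, and the displayed bound is the second.

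The routine parts — that $\hat F \in C^1$ with the claimed derivative, and the elementary arithmetic with $\lambda, \mu, \nu', \varepsilon$ above — I would not belabor. The one genuine subtlety, and the step I expect to cost the most care, is bridging the gap between the hypotheses available here (only a right inverse $R$, a Lipschitz estimate only on the thin slice $S$, and smallness phrased through a prescribed radius $r < \tfrac{1}{\mu\nu}$) and the textbook formulation of \autoref{theo:Kantorovich}, which presumes a two-sided inverse and a Lipschitz estimate on the full ball of radius $\tfrac{1}{\mu\nu}$. Passing to $x_0 + \Ima(R)$ cures the first mismatch, and enlarging $\nu$ to $\nu' = \tfrac{1}{\mu r}$ shrinks the Kantorovich ball down to exactly $S$, curing the other two; verifying that $\Ima(R)$ is closed (carried out above) is the only real technical point this requires.
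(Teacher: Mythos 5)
Your proof is correct and follows essentially the same route as the paper: pass to the Banach subspace $Z = \Ima(R)$, where $DF(x_0)$ restricts to an isomorphism with inverse $R$, and apply the classical Newton--Kantorovich theorem (\autoref{theo:Kantorovich}) to $G(x) = F(x) - b$ on that subspace. Two small points of divergence are worth recording. First, the paper establishes closedness of $\Ima(R)$ by observing that $P \ceq R\,DF(x_0)$ is a bounded projector onto it, whereas you argue directly that $R$ is bounded below; both are fine. Second, and more substantively, the paper applies \autoref{theo:Kantorovich} with the given $\nu$, so the Kantorovich radius is $\tfrac{1}{\mu\nu}$, which is \emph{strictly larger} than the radius $r$ of the slice $S$ on which the Lipschitz hypothesis is actually available; the paper then merely observes $r_- \leq 2\lambda < r$ without commenting on this mismatch. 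Your device of inflating $\nu$ to $\nu' = \tfrac{1}{\mu r}$ (legitimate since $\nu' \geq \nu$) shrinks the Kantorovich radius to exactly $r$, so that the hypotheses of \autoref{theo:Kantorovich} are met \emph{verbatim} on the slice. This is a cleaner and more careful reduction of the very mismatch you flag at the end as "the one genuine subtlety," and it repairs a gap the paper's own argument leaves implicit.
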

\begin{proof}
Notice that $P \ceq R \, DF(x_0)$ is a continuous projector onto $Z \ceq \Ima(R)$. Thus, $Z \subset X$ is a closed subspace and hence a Banach space.
Define the mapping $G \colon S \to Y$ by $G(x) \ceq F(x)-b$.
Observe that $DG(x)\,w = DF(x)\,w$ holds true for all $x \in S$ and all $w \in Z$.
In particular, we have $DG(x_0) \, R = DF(x_0) \, R = \id_Y$ and
$R \, DG(x_0) \, w = R \,  DF(x_0) \, w = w$ for all $w \in Z$.
Thus, $DG(x_0) \colon Z \to Y$ is continuously invertible and we have $\nnorm{DG(x_0)^{-1}} = \nnorm{R} \leq \mu$.
Moreover, it follows that
$
	\nnorm{DG(\tilde x) - DG(x)}	\leq \nu \,\nnorm{\tilde x - x}
$,
for all $\tilde x$, $x \in S$.
For $b \in \ClosedBall{F(x_0)}{\varepsilon}$, we deduce the bound
$
	\lambda \ceq \nnorm{DG(x_0)^{-1}G(x_0)} = \nnorm{R\, (F(x_0)-b)} 
	\leq  \mu \, \nnorm{F(x_0)-b}
	< \tfrac{1}{2 \, \mu \, \nu}.
$
Thus, we may apply the Newton--Kantorovich theorem \autoref{theo:Kantorovich} to the mapping $G$:
We find a solution $a \in \ClosedBall{x_0}{r_-}$ 
where
$r_{-} \ceq r \, \bigparen{1 - \sqrt{1-2 \, \lambda \, \mu \, \nu}} \leq 2\, \lambda \leq 2 \, \mu \, \nnorm{F(x_0)-b}$.
\end{proof}

\section{Sampling Estimates}\label{sec:SamplingEstimates}

This is complementary material for \autoref{prop:ApproximateSamplingTheorem} in \autoref{sec:ApproximateSampling}.
In order to streamline the following exposition, we introduce the \emph{signed distance} $\SignedDistC{a}{b} \ceq \int_a^b \LineElementC$ and the \emph{unsigned distance} $\UnsignedDistC{a}{b} \ceq \nabs{\SignedDistC{a}{b}}$
of two points $a$, $b \in \Interval$ with respect to an immersed curve $\Curve \in \ImmC$.

The following lemma provides us with estimates on the deviation of secant lengths from arc lengths.

\begin{blemma}[Length Distortion Estimates]\label{lem:LengthDistortion}
Let $p \in \intervalcc{1,\infty}$, and $\EnergyBound \geq 0$.
There are constants $C_{p}>0 $ and $\delta_{p}>0$ such that the following inequalities
hold true for each curve $\Curve \in \ImmC[2,p]$ with 
$\nnorm{\Tangent_\Curve}_{\Sobo[1,p][\Curve]} \leq \EnergyBound$ 
and for all compact intervals $\intervalcc{a,b} \subset \Interval$ with 
$\UnsignedDistC{a}{b}\leq \delta_{p}$:
\begin{align*}
	\bigabs{ 
		1-
		\tfrac{\nabs{\Curve(b)-\Curve(a)}}{\UnsignedDistC{a}{b}}
	} \leq C_{p} \, \nnorm{\CurvatureC}_{L^p}^2 \, \UnsignedDistC{a}{b}^{2-\frac{1}{p}}
	\qand
	\bigabs{ 
		1
		-
		\tfrac{\UnsignedDistC{a}{b}}{\nabs{\Curve(b)-\Curve(a)}}
	} \leq C_{p} \, \nnorm{\CurvatureC}_{L^p}^2  \, \UnsignedDistC{a}{b}^{2-\frac{1}{p}}.
	\label{item:LengthDistortion}
\end{align*}
\end{blemma}
\begin{proof}
We apply the fundamental theorem of calculus $f(b) = f(a) + \int_a^b \Dtot f \, \LineElement_\Curve$ repeatedly and exploit that $\ninnerprod{\TangentC(r),\TangentC(r)} = 1$ and $\ninnerprod{\TangentC(r),\CurvatureC(r)}=0$ hold true. This way, we obtain
\begin{align*}
	\MoveEqLeft
	\ninnerprod{\TangentC(r),\TangentC(s)}
	= \textstyle
	\ninnerprod{\TangentC(r),\TangentC(r)} + 
	\int_r^s \ninnerprod{\TangentC(r),\CurvatureC(t)}\, \LineElementC[t]
	\notag\\
	&= \textstyle
	1 
	+ \int_r^s \cancel{\ninnerprod{\TangentC(t),\CurvatureC(t)}}\, \LineElementC[t]	
	- \int_r^s\!\!\int_r^t \ninnerprod{\CurvatureC(u),\CurvatureC(t)} \, \LineElementC[u]		 \, \LineElementC[t].
\end{align*}
This leads us to
\begin{align*}
	\nabs{\Curve(b)-\Curve(a)}^2
	&=\textstyle
	\int_a^b \!\! \int_a^b \ninnerprod{\TangentC(r),\TangentC(s)} \, \LineElement_\Curve(s)\, \LineElement_\Curve(t)
	\\
	&=\textstyle
	\UnsignedDistC{a}{b}^2
	-
	\int_a^b\!\!\int_a^b\!\!\int_r^s \!\!\int_r^t \ninnerprod{\CurvatureC(u),\CurvatureC(t)} \, \LineElement_\Curve(u)\, \LineElement_\Curve(t) \, \LineElement_\Curve(s) \, \LineElement_\Curve(r)
\end{align*}
and by Hölder's inequality, the latter integral is bounded by $\frac{2^{-1+1/p}\, p^2}{(2 p-1) \,(3 p-2)}
	\,
	\nnorm{\CurvatureC}_{L^p}^2
	\,
	d_\Curve(a,b)^{4-2/p}$
so that we obtain
\begin{align*}
	\textstyle
	\abs{
	1
	-
	\paren{
		\frac{\nabs{\Curve(b)-\Curve(a)}}{\UnsignedDistC{a}{b}}
	}^2
	}
	\leq 
	C_p  \,  \nnorm{\CurvatureC}_{L^p}^2 \, d_\Curve(a,b)^{2-2/p}.
\end{align*}
Now the statements of the lemma follow from applying the estimates
$\nabs{1-\sqrt{1-x}} \leq \frac{x}{2} + \frac{x^2}{4}$
and
$\nabs{1-1/\sqrt{1-x}} \leq \frac{x}{2} + x^2$ (both true for $x\in \intervalcc{0,1/2}$)
to
applied to $x = 1 - \paren{\tfrac{\nabs{\Curve(b)-\Curve(a)}}{\UnsignedDistC{a}{b}}}^2$.
\end{proof}

What follows is in the tradition of classical results on the consistency of second order finite differences. The only twist here is that we divide by secant lengths instead of arc lengths; it is basically the second order consistency of the secant lengths (which we have just shown) that allows us to perform this replacement.

\begin{lemma}[Consistency of Second  Derivatives]\label{lem:SecondDerivativeEstimate}
Let $p \in \intervalcc{1,\infty}$ and $\EnergyBound \geq 0$.
There are constants $C>0 $ and $\delta>0$ such that the following
holds true 
for each curve $\Curve \in \ImmC[3,p]$ with 
$\nnorm{\Curvature_\Curve}_{\Sobo[1,p]} \leq \EnergyBound$,
for each partition $\Triangulation$ of $\Interval$ satisfying $\MaxRadius(\Triangulation) \leq \delta$, and 
for each function $f \in \Sobo[3,p][][\Interval][\R]$:

Then one has
for $\Polygon(\Vertex) \ceq \Curve(\Vertex)$ and $F(\Vertex) \ceq f(\Vertex)$, $\Vertex \in \Vertices(\Triangulation)$ that
\begin{align*}
	\nabs{(\DiscD^2 F)(\Vertex) - (\Dtot^2 f)(\Vertex)}
	\leq 
	C \, \nnorm{\Dtot f}_{\Sobo[2,p][\Curve]} \, \UnsignedDistC{\VprevV{\Vertex}}{\VnextV{\Vertex}}^{1-1/p}
	\quad \text{for each $\Vertex \in \IVertices(\Triangulation)$.}
\end{align*}
\end{lemma}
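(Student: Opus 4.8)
We may assume $\Curve$ is parametrised by arc length, as is the case in every application of the lemma, so that $\Dtot = \tfrac{\dd}{\dd t}$ and $\LineElementC = \dd t$. Fix an interior vertex $\Vertex$, write $\Vertex_+ \ceq \VnextV{\Vertex}$ and $\Vertex_- \ceq \VprevV{\Vertex}$ for its two neighbours, let $h_\pm \ceq \nabs{\Curve(\Vertex_\pm) - \Curve(\Vertex)}$ denote the two adjacent secant lengths (so that $\DualEdgeLengths_\Polygon(\Vertex) = \tfrac12(h_- + h_+)$), and set $a \ceq \SignedDistC{\Vertex_-}{\Vertex} > 0$ and $b \ceq \SignedDistC{\Vertex}{\Vertex_+} > 0$; by additivity of the signed distance, $a + b = \SignedDistC{\Vertex_-}{\Vertex_+} = \UnsignedDistC{\VprevV{\Vertex}}{\VnextV{\Vertex}}$. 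Applying the fundamental theorem of calculus $g(y) = g(x) + \int_x^y \Dtot g\,\LineElementC$ three times to $f$ (admissible because $f \in \Sobo[3,p]$) and writing all derivatives of $f$ at $\Vertex$, we obtain the Taylor expansions
\begin{align*}
	f(\Vertex_+) - f(\Vertex) &= b\, \Dtot f + \tfrac{b^2}{2}\, \Dtot^2 f + R_+,\\
	f(\Vertex) - f(\Vertex_-) &= a\, \Dtot f - \tfrac{a^2}{2}\, \Dtot^2 f + R_-,
\end{align*}
where, by Hölder's inequality applied to the iterated arc-length integrals, the remainders satisfy $\nabs{R_+} \leq C\, \nnorm{\Dtot^3 f}_{L^p_\Curve}\, b^{3 - 1/p}$ and $\nabs{R_-} \leq C\, \nnorm{\Dtot^3 f}_{L^p_\Curve}\, a^{3 - 1/p}$.

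Dividing the first line by $h_+$, the second by $h_-$, subtracting, dividing by $\DualEdgeLengths_\Polygon(\Vertex) = \tfrac12(h_- + h_+)$, and subtracting $\Dtot^2 f(\Vertex)$, we arrive at
\begin{align*}
	(\DiscD^2 F)(\Vertex) - (\Dtot^2 f)(\Vertex)
	&=
	\tfrac{2}{h_- + h_+}\Bigparen{\tfrac{b}{h_+} - \tfrac{a}{h_-}}\, \Dtot f(\Vertex)
	+
	\Bigparen{\tfrac{1}{h_- + h_+}\bigparen{\tfrac{b^2}{h_+} + \tfrac{a^2}{h_-}} - 1}\, \Dtot^2 f(\Vertex)
	\\
	&\qquad
	+
	\tfrac{2}{h_- + h_+}\Bigparen{\tfrac{R_+}{h_+} - \tfrac{R_-}{h_-}}.
\end{align*}
The plan is to bound the three summands separately, the crucial input being the second-order accuracy of secant lengths. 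Since $\Curve$ is unit-speed, $\nnorm{\Tangent_\Curve}_{\Sobo[1,p][\Curve]} \leq \nabs{\Interval}^{1/p} + \nnorm{\Curvature_\Curve}_{\Sobo[1,p]} \leq \nabs{\Interval}^{1/p} + \EnergyBound$, so \autoref{lem:LengthDistortion} applies and yields $\nabs{1 - h_+/b}$, $\nabs{1 - b/h_+} \leq C\, b^{2 - 1/p}$ together with the analogous bounds involving $a$, $h_-$. Hence for $\MaxRadius(\Triangulation) \leq \delta$ small enough we have $h_+ \geq \tfrac12 b$ and $h_- \geq \tfrac12 a$ (so $h_- + h_+ \geq \tfrac12(a+b)$), while $\tfrac{b^2}{h_+}$, $\tfrac{a^2}{h_-}$ and $h_- + h_+$ differ from $b$, $a$ and $a + b$ by at most $C\,(a+b)^{3 - 1/p}$.

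Substituting these facts: the first summand has $\bigabs{\tfrac{b}{h_+} - \tfrac{a}{h_-}} \leq C\,(a+b)^{2-1/p}$; the middle bracket equals $\tfrac{1}{h_-+h_+}\bigparen{(\tfrac{b^2}{h_+} + \tfrac{a^2}{h_-}) - (h_- + h_+)}$ and is thus $\leq C\,(a+b)^{2-1/p}$; and $\tfrac{\nabs{R_+}}{h_+} + \tfrac{\nabs{R_-}}{h_-} \leq C\, \nnorm{\Dtot^3 f}_{L^p_\Curve}\,(a+b)^{2-1/p}$. Dividing the first and third by $h_- + h_+ \geq \tfrac12(a+b)$, and using that $a + b = \UnsignedDistC{\VprevV{\Vertex}}{\VnextV{\Vertex}} \leq 2\,\delta$ is bounded, so that $(a+b)^{2-1/p} \leq 2\delta\,(a+b)^{1-1/p}$, all three summands are bounded by $C\,(a+b)^{1-1/p}$ times, respectively, $\nabs{\Dtot f(\Vertex)}$, $\nabs{\Dtot^2 f(\Vertex)}$, and $\nnorm{\Dtot^3 f}_{L^p_\Curve}$. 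Finally, the one-dimensional Morrey embedding $\Sobo[1,p][\Curve] \hookrightarrow L^\infty$ controls $\nnorm{\Dtot f}_{L^\infty_\Curve}$ and $\nnorm{\Dtot^2 f}_{L^\infty_\Curve}$, hence the point values $\nabs{\Dtot f(\Vertex)}$ and $\nabs{\Dtot^2 f(\Vertex)}$, by $\nnorm{\Dtot f}_{\Sobo[2,p][\Curve]}$, which also dominates $\nnorm{\Dtot^3 f}_{L^p_\Curve}$; collecting the three estimates and recalling $a + b = \UnsignedDistC{\VprevV{\Vertex}}{\VnextV{\Vertex}}$ proves the claim.

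The computations with the Taylor remainders and with $a$, $b$, $h_\pm$ are routine; the only genuine ingredient is \autoref{lem:LengthDistortion}, and the single point that requires care is to track the Hölder exponent $1 - \tfrac1p$ consistently through the divisions by $h_\pm$ and by $\DualEdgeLengths_\Polygon(\Vertex)$, so that exactly one power of the local mesh size survives.
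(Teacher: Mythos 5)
Your proposal is correct and follows essentially the same route as the paper: Taylor expansion with integral remainder combined with the secant-versus-arc-length estimate of \autoref{lem:LengthDistortion}, with a final conversion of $\nabs{\Dtot f(\Vertex)}$, $\nabs{\Dtot^2 f(\Vertex)}$ to $\nnorm{\Dtot f}_{\Sobo[2,p][\Curve]}$ via Morrey. The only difference is algebraic book-keeping: the paper first shows that the \emph{arc-length} difference quotient matches $\Dtot^2 f$ exactly up to the Taylor remainder (so the $\Dtot f$ and $\Dtot^2 f$ contributions cancel identically) and then estimates the replacement of arc-lengths by secant-lengths in the denominators as a separate step, whereas you expand directly against the secant denominators and bound the resulting three error terms individually — both arrive at the exponent $1-\tfrac1p$ with constants depending only on $p$ and $\EnergyBound$, and your initial WLOG reduction to arc-length parametrisation is a harmless convenience the paper does not invoke.
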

\begin{proof}
Fix $\Vertex \in \IVertices(\Triangulation)$ and
abbreviate $a \ceq \VprevV{\Vertex}$,
$b \ceq \VthisV{\Vertex}$, and $c \ceq \VnextV{\Vertex}$.
This way, we have 
\begin{align*}
	\DiscD^2 F(\Vertex) 
	= \tfrac{2}{\nabs{\Curve(b)-\Curve(a)}+\nabs{\Curve(c)-\Curve(b)}} \paren{
		\tfrac{f(c)-f(b)}{\nabs{\Curve(c)-\Curve(b)}}
		-
		\tfrac{f(b)-f(a)}{\nabs{\Curve(b)-\Curve(a)}}
	}.
\end{align*}
Taylor's formula with remainder in integral form implies
\begin{align*}
	\tfrac{f(c)-f(b)}{\SignedDistC{b}{c}}
	&= 
	\textstyle	
	\Dtot f(b) 
	+ \tfrac{\SignedDistC{b}{c} }{2} \Dtot^2 f(b)
	+ \frac{1}{\SignedDistC{b}{c}} \int_b^c
		\tfrac{\SignedDistC{t}{c}^2}{2} \Dtot^3 f(t) \, \LineElementC(t)
	\qand
	\\
	\tfrac{f(b)-f(a)}{\SignedDistC{a}{b}}
	&= 
	\textstyle	
	\Dtot f(b) 
	- \tfrac{\SignedDistC{a}{b} }{2} \Dtot^2 f(b)
	+ \frac{1}{\SignedDistC{a}{b}} \int_a^b
		\tfrac{\SignedDistC{a}{t}^2}{2} \Dtot^3 f(t) \, \LineElementC(t).
\end{align*}
Taking differences leads to
\begin{align*}
	\abs{
	\tfrac{2}{\SignedDistC{a}{c}}
	\paren{
	\tfrac{f(c)-f(b)}{\SignedDistC{b}{c}}
	-
	\tfrac{f(b)-f(a)}{\SignedDistC{a}{b}}
	}
	-
	\Dtot^2 f(b)
	}
	&\leq
	\tfrac{1}{2}\, \seminorm{f}_{\Sobo[3,p][\Curve]}\, \UnsignedDistC{a}{c}^{1-1/p}.
\end{align*}
By \autoref{lem:LengthDistortion}, we have
\begin{align*}
	\MoveEqLeft
	\abs{
	\paren{
		\tfrac{f(c)-f(b)}{\nabs{\Curve(c)-\Curve(b)}}
		-
		\tfrac{f(b)-f(a)}{\nabs{\Curve(b)-\Curve(a)}}
	}
	-
	\paren{
		\tfrac{f(c)-f(b)}{\SignedDistC{b}{c}}
		-
		\tfrac{f(b)-f(a)}{\SignedDistC{a}{b}}
	}
	}
	\\
	&=
	\abs{
		\tfrac{f(b)-f(a)}{\SignedDistC{a}{b}}
		\,
		\Bigparen{
			\tfrac{\SignedDistC{a}{b}}{\nabs{\Curve(b)-\Curve(a)}}
			-
			1
		}
	}
	+
	\abs{	
		\tfrac{f(c)-f(b)}{\SignedDistC{b}{c}}
		\,
		\Bigparen{
			\tfrac{\SignedDistC{b}{c}}{\nabs{\Curve(c)-\Curve(b)}}
			-
			1
		}
	}
	\leq
	C\, \nseminorm{f}_{\Sobo[1,\infty][\Curve]}\, \UnsignedDistCp[2-1/p][a][c]
\end{align*}
Finally, by combining this and using \autoref{lem:LengthDistortion} once more, we obtain:
\begin{align*}
	\nabs{\DiscD^2 F(\Vertex) - \Dtot^2 f(\Vertex)}
	\leq  C \, \nnorm{\Dtot f}_{\Sobo[2,p][\Curve]} \, \UnsignedDistC{\shiftll{\Vertex}}{\shiftrr{\Vertex}}^{1-1/p}.
\end{align*}
\end{proof}

\newpage

\printbibliography

\end{document}